 \newcommand{\Galg}{\mathbf{G}}
 \newcommand{\Balg}{\mathbf{B}}
 \newcommand{\Ualg}{\mathbf{U}}
 \newcommand{\Halg}{\mathbf{H}}
 \newcommand{\Lalg}{\mathbf{L}}
 \newcommand{\Talg}{\mathbf{T}}
 \newcommand{\Xalg}{\mathbf{X}}
 \newcommand{\al}{\mathscr{C}}
 \newcommand{\rk}{\operatorname{Rk}}
 \newcommand{\ad}{\operatorname{ad}}
 \newcommand{\rad}{\operatorname{Rad}}
 \newcommand{\Stab}{\operatorname{Stab}}
 \newcommand{\Ind}{\operatorname{Ind}}
 \newcommand{\Res}{\operatorname{Res}}
 \newcommand{\Nn}{\operatorname{N}}
 \newcommand{\Cen}{\operatorname{C}}
 \newcommand{\ord}{\operatorname{Ord}}
 \newcommand{\vol}{\operatorname{Vol}}
 \newcommand{\cprod}{\centerdot}
 \newcommand{\semi}[1]{\rtimes\langle\,#1\,\rangle}
 \newcommand{\cyc}[1]{\langle\,#1\,\rangle}
 \newcommand{\C}{\mathbb{C}}
 \newcommand{\F}{\mathbb{F}}
\newcommand{\N}{\mathbb{N}}
 \newcommand{\Q}{\mathbb{Q}}
 \newcommand{\Z}{\mathbb{Z}}
 \newcommand{\Irr}{\operatorname{Irr}}
 \newcommand{\Zz}{\operatorname{Z}}
 \newcommand{\Sc}{\operatorname{sc}}
\newcommand{\om}{\varpi}
\newcommand{\cal}[1]{\mathcal{#1}}
\newtheorem{theorem}{Theorem}[section] 
\newtheorem{lemma}[theorem]{Lemma}     
\newtheorem{corollary}[theorem]{Corollary}
\newtheorem{proposition}[theorem]{Proposition}
\newtheorem{convention}[theorem]{Convention}
\theoremstyle{definition}
\newtheorem{remark}[theorem]{Remark}
\title[]
{On semisimple classes and semisimple characters in finite reductive groups}
\author{Olivier Brunat}
\address{Ruhr-Universit\"at Bochum\\
Fakult\"at f\"ur Mathematik\\
D-44780 Bochum, Germany\\}
\email{Olivier.Brunat@ruhr-uni-bochum.de}
\subjclass{20C15,\ 20C33}
\begin{document}

\begin{abstract} 
In this article,  we study the elements with disconnected centralizer in
the Brauer complex associated to a simple algebraic group $\Galg$
defined over a finite field with corresponding Frobenius map $F$ and
derive the number of $F$-stable semisimple classes of $\Galg$ with
disconnected centralizer when the order of the fundamental group has
prime order. We also discuss extendibility of semisimple characters to
their inertia group in the full automorphism group.  As a consequence, we
prove that ``twisted'' and ``untwisted''  simple groups of type $E_6$
are ``good'' in defining characteristic, which is a contribution to the
general program initialized by Isaacs, Malle and Navarro to prove the
McKay Conjecture in representation theory of finite groups. 
\end{abstract}
\maketitle

\section{Introduction}\label{intro}

This article is concerned with the semisimple characters of finite
reductive groups. A finite reductive group is the fixed-point subgroup
$\Galg^F$ of a connected reductive group $\Galg$ defined over the
finite field $\F_q$ of characteristic $p>0$, where
$F:\Galg\rightarrow\Galg$ is the Frobenius map corresponding to this
$\F_q$-structure. The semisimple characters of $\Galg^F$ are the
constituents of the duals of Gelfand-Graev characters (for the
Alvis-Curtis duality) and play an important role in the ordinary
representation theory of $\Galg^F$, because, apart from a few
exceptions, they are the $p'$-characters of $\Galg^F$ (that is the
irreducible characters of $\Galg^F$ whose degree is prime to $p$). In
the following, we will write $\Irr_s(\Galg^F)$ for the set of
semisimple characters of $\Galg^F$.  One of the aims of this work is to
study these characters, compute their number, understand the action of
the automorphism group of $\Galg^F$ on $\Irr_s(\Galg^F)$, and determine the
extendibility of $\chi\in\Irr_s(\Galg^F)$ to
its inertia group in the full automorphism group.  These questions
are crucial, for example in order to prove that $\Galg^F$ satisfies
the inductive McKay condition at the prime $p$.


Using  Deligne-Lusztig theory~\cite{DeligneLusztig}, Lusztig has shown
that the irreducible characters of $\Galg^F$ can be partitioned into
series (the so-called rational Lusztig series) labelled by the
semisimple classes of $\Galg^{*F^*}$, where $(\Galg^*,F^*)$ denotes a
pair dual to $(\Galg,F)$.  If such a series is labelled by a semisimple
class of $\Galg^{*F^*}$ with representative $s$, then it contains
$|A_{\Galg^*}(s)^{F^*}|$ semisimple characters, where
$A_{\Galg^*}(s)=\Cen_{\Galg^*}(s)/\Cen_{\Galg^*}^{\circ}(s)$ is the
component group of $s$. In fact, $\Irr_s(\Galg^F)$ can be parametrized
in a natural way by pairs $(s,\xi)$ where $s$ runs over a set of
representatives of the semisimple classes of $\Galg^{*F^*}$ and
$\xi\in\Irr(A_{\Galg^*}(s)^{F^*})$.  So, in order to understand
$\Irr_s(\Galg^F)$, we have to particularly consider the semisimple
classes of $\Galg^{*F^*}$ and their component groups.


In~\cite{Br8}, we explicitly computed the number of semisimple classes of
$\Galg^{*F^*}$ when $\Galg^*$ is simple and $p$ is a good prime for
$\Galg^*$.
For that, we used the theory of Gelfand-Graev characters for connected
reductive groups with disconnected center, developed by
Digne-Lehrer-Michel in~\cite{DLM1} and \cite{DLM2}.  This method gives
a lot of information on the $F^*$-stable semisimple
$\Galg^*$-classes of $\Galg^*$ with disconnected centralizer (by
the centralizer of an $F^*$-stable class, we mean the centralizer of a
fixed $F^*$-stable representative), which allows us to prove the
inductive McKay condition in defining characteristic for simple groups
coming from simple algebraic groups with fundamental group of order
$2$; see~\cite[Theorem 1.1]{BrHi}.  However, we cannot derive
from~\cite{Br8} all information that we need, especially phenomena
appearing only in the algebraic group, as for example the description of the
$F^*$-stable semisimple classes with disconnected centralizer
such that the fixed-point subgroup $A_{\Galg^*}(s)^{F^*}$ is trivial ($s$
is any $F^*$-stable representative); see
Remark~\ref{rk:manque}.

In this work, we will consider the Brauer complex, initially introduced by
J. Humphreys in \cite{Hum} for describing $p$-modular representation theory of
$\Galg^F$. When $\Galg$ is a simple simply-connected group, Deriziotis
proved in~\cite{DeriBrauer} that the $F$-stable semisimple classes of
$\Galg$ (and thus, the semisimple classes of $\Galg^F$) are parametrized
by the faces of the Brauer complex of maximal dimension.
We generalize here some results to any simple algebraic groups using
an approach of Bonnaf\'e~\cite{bonnafequasi}. 

Moreover, we are interested in the problem of the extendibility of
semisimple characters of $\Galg^F$ to their inertia groups in
$\operatorname{Aut}(\Galg^F)$.
Digne-Michel~\cite{DMnonconnexe} and Malle~\cite{MalleNonConnexe}
developed a theory of Deligne-Lusztig characters for finite disconnected
reductive groups.  Using this theory, Sorlin~\cite{Sorlin}
constructed extensions of Gelfand-Graev characters of $\Galg^F$ to
$\Galg^F\semi\sigma$, where $\sigma$ is a quasi-central
semisimple or unipotent automorphism of $\Galg$. We will use
results of~\cite{Sorlin} in order to prove that, under certain
assumptions, the semisimple characters of $\Galg^F$ are extendible to
their inertia groups in the full automorphism group.

Finally, recall that the McKay Conjecture asserts that for any finite
group $G$, if $\Irr_{p'}(G)$ denotes the set of $p'$-characters of $G$,
then $|\Irr_{p'}(G)|=|\Irr_{p'}(\Nn_G(P))|$, where $P$ is a fixed
$p$-Sylow subgroup of $G$. In~\cite{IMN}, Isaacs-Malle-Navarro proved a
reduction theorem of this conjecture to finite simple groups. They
showed
that if every simple group satisfies a refined property, the so-called
inductive McKay condition (see~\cite[\S10]{IMN} for more details), then
the McKay Conjecture holds for all finite groups.  As an application of our
results, we will prove that ``twisted'' and ``untwisted'' finite simple
groups of type $E_6$ satisfy the inductive McKay condition in defining
characteristic. 

The paper is organized as follows. In Section~\ref{section1}, we
introduce the Brauer complex of $\Galg$ and describe the faces
containing points with disconnected centralizer; see
Theorem~\ref{alovefixe}. Then we compute the number of $F$-stable
semisimple classes of $\Galg$ with disconnected centralizer when
$\Galg$ is not of type $D_{2n}$ and has fundamental group of prime
order; see Proposition~\ref{prop:premier}. Note that this result
requires no condition on $q$. Furthermore, if $\Galg$ is not of type
$D_{2n}$, we describe the $F$-stable points of the Brauer complex in the
case that $F$ acts trivially on the center of $\Galg$; see
Propositions~\ref{prop:qmoinsun} and~\ref{prop:qplusun}.  As first
consequences we prove that if $p$ is odd, then the McKay Conjecture holds 
for $\Galg^F$ at the prime $p$, where $\Galg$ is a simple and
simply-connected group of type $D_{2n+1}$; see
Remark~\ref{rk:McKayDn}. It also holds for $p=2$ (resp. $p=3$) 
when $\Galg$ is a simple and simply-connected
group of type $E_6$ (resp. $E_7$); see
Remark~\ref{rk:McKayE67}.  In Section~\ref{section2}, we recall the
construction of semisimple characters and give the action of
automorphisms of $\Galg^F$ on $\Irr_s(\Galg^F)$. Then we discuss
extendibility in a special case of these characters to their inertia
groups; see Proposition~\ref{extension}.  Finally, in
Section~\ref{section3} we prove the inductive McKay condition in
defining characteristic (for $p>3$) for ``untwisted'' simple groups of
type $E_6$ in Theorem~\ref{E6bon}, and for the ``twisted'' version in
Theorem~\ref{2E6bon}.
Note that these methods also prove that the inductive McKay condition
is satisfied at the prime $p$ 
by simple groups of Lie type of type $A_{2n}$ such that
$2n+1$ is a prime number distinct from $p$; see Proposition~\ref{An}.



\section{Semisimple classes with disconnected
centralizers}\label{section1}
\subsection{Notation}\label{notation}
Throughout this paper, $\Galg$ denotes a simple algebraic group over
$\overline{\F}_p$.
We fix a maximal torus $\Talg$
contained in a Borel subgroup $\Balg$ of $\Galg$. Let $\Phi$
be the root system of $\Galg$ relative to $\Talg$. We write $\Phi^+$ and
$\Delta$ for the set of positive roots and the set of simple roots of
$\Phi$ corresponding to $\Balg$. We denote by $X(\Talg)$ and $Y(\Talg)$
the groups of characters and cocharacters of $\Talg$ and
write $\cyc{,}:X(\Talg)\times Y(\Talg)\rightarrow \Z$ for the duality
pairing between $X(\Talg)$ and $Y(\Talg)$. For $\alpha\in \Phi$, we
denote by $\alpha^{\vee}$ 
the coroot of $\alpha$ and write
$\Phi^{\vee}=\{\alpha^{\vee}\, |\, \alpha\in\Phi\}$.  We define
$V=\Q\otimes_{\Z} Y(\Talg)$ and $V^*=\Q\otimes_{\Z} X(\Talg)$, and extend
$\cyc{,}$ to a nondegenerate bilinear form $$\cyc{,}:V^*\times
V\rightarrow\Q.$$ Let $\Galg_{\Sc}$ (resp. $\Galg_{\ad}$) be the
simply-connected version (resp. adjoint version) of $\Galg$. We denote
by $\pi_{\Sc}:\Galg_{\Sc}\rightarrow\Galg$ and
$\pi_{\ad}:\Galg\rightarrow\Galg_{\ad}$ corresponding isogenies.
Write $\Talg_{\Sc}=\pi_{\Sc}^{-1}(\Talg)$ and
$\Talg_{\ad}=\pi_{\ad}(\Talg)$.  Then $\Talg_{\Sc}$ and $\Talg_{\ad}$
are maximal tori of $\Galg_{\Sc}$ and $\Galg_{\ad}$,
and the surjective homomorphisms
$\pi_{\Sc}:\Talg_{\Sc}\rightarrow\Talg$ and
$\pi_{\ad}:\Talg\rightarrow\Talg_{\ad}$ induce injective homomorphisms
$\pi_{\Sc,X}:X(\Talg)\rightarrow X(\Talg_{\Sc}),\,\chi\rightarrow
\chi\circ\pi_{\Sc}$ and
$\pi_{\ad,X}:X(\Talg_{\ad})\rightarrow
X(\Talg),\chi\rightarrow\chi\circ\pi_{\ad}$. 
Using $\pi_{\Sc,X}$ and $\pi_{\ad,X}$, we identify $X(\Talg)$ with a
subgroup of $X(\Talg_{\Sc})$ containing $X(\Talg_{\ad})$, such that the
root systems $\pi_{\Sc}(\Phi)$ and $\pi^{-1}_{\ad}(\Phi)$ of
$\Galg_{\Sc}$ and $\Galg_{\ad}$ are identified with $\Phi$.  
Similarly, using the injective morphisms $\pi_{\Sc,Y}:Y(\Talg_{\Sc})\rightarrow
Y(\Talg),\,\gamma\rightarrow\pi_{\Sc}\circ\gamma$ and
$\pi_{\ad,Y}:Y(\Talg)\rightarrow Y(\Talg_{\ad}),\,\gamma\rightarrow
\pi_{\ad}\circ\gamma$ induced by $\pi_{\Sc}$
and $\pi_{\ad}$, the group $Y(\Talg)$ is viewed as a subgroup of
$Y(\Talg_{\ad})$ containing $Y(\Talg_{\Sc})$ such that
$\pi_{\Sc,Y}^{-1}(\Phi^{\vee})$ and $\pi_{\ad,Y}(\Phi^{\vee})$ are
identified with $\Phi^{\vee}$. Note that $V=\Q\otimes_{\Z}
Y(\Talg_{\Sc})=\Q\otimes_{\Z} Y(\Talg_{\ad})$, $V^*=\Q\otimes_{\Z}
X(\Talg_{\Sc})=\Q\otimes_{\Z} X(\Talg_{\ad})$, and the linear maps
$\pi_{\Sc,X}$ and $\pi_{\Sc,Y}$ (resp. $\pi_{\ad,X}$ and $\pi_{\ad,Y}$)
are adjoint maps with respect to $\cyc{,}$.  
We define the group of weights by 
\begin{equation}
\Lambda=\{\lambda\in
V^*\,|\,\cyc{\lambda,\alpha^{\vee}}\in\Z\quad\forall\,\alpha\in\Phi\}.
\label{eq:weightlattice}
\end{equation}
Recall that $X(\Talg_{\ad})=\Z\Phi$ and $X(\Talg_{\Sc})=\Lambda$, and
the fundamental group of $\Phi$ is the finite group
$\Lambda/\Z\Phi=X(\Talg_{\Sc})/X(\Talg_{\ad})$.
Now, we denote by $(\om_{\alpha}^{\vee})_{\alpha\in\Delta}$ and 
$(\om_{\alpha})_{\alpha\in\Delta}$ the dual bases with respect to
$\cyc{,}$ of $\Delta$ and $\Delta^{\vee}=\{\alpha^{\vee}\,|\,\alpha\in\Delta\}$,
respectively. Since $X(\Talg_{\ad})=\Z\Phi$ and
$X(\Talg_{\Sc})=\cyc{\om_{\alpha},\,\alpha\in\Delta}_{\Z}$, we deduce
that
\begin{equation}
Y(\Talg_{\Sc})=\bigoplus_{\alpha\in\Delta}\Z\alpha^{\vee}\quad\textrm{and}
\quad Y(\Talg_{\ad})=\bigoplus_{\alpha\in\Delta}\Z\,\om_{\alpha}^{\vee}.
\label{eq:yad}
\end{equation}
We denote by $W=\Nn_{\Galg}(\Talg)/\Talg$ the Weyl group
of $\Galg$. Then $W$ acts on $X(\Talg)$ and on $Y(\Talg)$ by
\begin{equation}
\label{eq:Waction}
^w\chi(t)=\chi(t^w)\quad\textrm{and}\quad
\gamma^w(t)=\gamma(t)^w,
\end{equation}
for $\gamma\in Y(\Talg)$, $\chi\in X(\Talg)$ and $t\in\Talg$. In
particular, we have $\cyc{^w\chi,\gamma}=\cyc{\chi,\gamma^w}$ and
$W(\Phi)=\Phi$ and $W(\Phi^\vee)=\Phi^\vee$. Recall
from~\cite[\S1.9]{carter2} that
$W=\cyc{s_{\alpha}\,|\,\alpha\in\Phi}$, and for $\chi\in X(\Talg)$ and
$\gamma\in Y(\Talg)$, we have
$s_{\alpha}(\chi)=\chi-\cyc{\chi,\alpha^\vee}\alpha$ and
$s_{\alpha}(\gamma)=\gamma-\cyc{\alpha,\gamma}\alpha^{\vee}$.

\subsection{Semisimple classes with disconnected centralizers}
We denote by $\Q_{p'}$ the additive subgroup of $\Q$ of rational numbers of the
form $a/b$ with $b$ not divisible by $p$.  We choose an isomorphism of
groups $\tilde{\iota}:\Q_{p'}/\Z\rightarrow \overline{\F}_p^{\times}$ as
in~\cite[3.1.3]{carter2}. Moreover, following~\cite[3.1.2]{carter2},  we
identify the group $\Q_{p'}/\Z\otimes_{\Z} Y(\Talg)$ with $\Talg$ using the
isomorphism of groups $\tilde{\iota}:\Q_{p'}/\Z\otimes_{\Z}Y(\Talg)\rightarrow
\Talg,\,r\otimes \gamma\mapsto \gamma(\widetilde{\iota}(r))$.
Furthermore, there is a surjective homomorphism from
$\Q_{p'}\otimes_{\Z} Y(\Talg)$ onto $\Q_{p'}/\Z\otimes_{\Z} Y(\Talg)$
with kernel $Y(\Talg)$, which induces the isomorphism of groups
\begin{equation}
\Talg\simeq \Q_{p'}/\Z\otimes_{\Z} Y(\Talg)\simeq \Q_{p'}\otimes_{\Z}
Y(\Talg)/Y(\Talg).
\label{eq:tore}
\end{equation}
Note that the action on $W$ on $Y(\Talg)$ defined in
Equation~(\ref{eq:Waction}) can be naturally extended to
$\Q_{p'}/\Z\otimes_{\Z} Y(\Talg)$ and to $(\Q_{p'}\otimes_{\Z}
Y(\Talg))/Y(\Talg)$ and is compatible with the isomorphisms of
Equation~(\ref{eq:tore}).
We define $\overline{W}_a= Y(\Talg) \rtimes W$.
Note that $\overline{W}_a$ acts on $V$ as a group of affine
transformations by $$(\gamma\cdot
w)(\lambda\otimes\gamma')=\lambda\otimes\gamma'^w+\gamma,$$ for
$\gamma,\,\gamma'\in Y(\Talg)$, $\lambda\in\Q$ and $w\in W$.
So, we have $$ \left(
\Q_{p'}\otimes_{\Z} Y(\Talg)/Y(\Talg)\right)/W=\Q_{p'}\otimes_{\Z}
Y(\Talg)/\overline{W}_a.$$ Since the set of semisimple
classes $s(\Galg)$ of $\Galg$ is in bijection with the set $\Talg/W$ of
$W$-orbits on $\Talg$ (see~\cite[3.7.1]{carter2}), we deduce that
$s(\Galg)$
is in bijection with 
\begin{equation}
\label{eq:isoclass}
(\Q_{p'}\otimes_{\Z} Y(\Talg))/\overline{W}_a,
\end{equation}
  Now, we
write $\alpha_0$ for the highest root of $\Phi$ (with respect to the
height defined by $\Delta$) and put
\begin{equation}
\alpha_0=\sum_{\alpha\in\Delta}n_{\alpha}\,\alpha,
\label{eq:heightelement}
\end{equation}
with $n_{\alpha}\in\N^*$. 
We define the affine Weyl group $W_a$ of $V$ as the subgroup
of affine transformations of $V$ generated by 
$s_{\alpha}$ (for $\alpha\in\Delta$) and 
$s_{\alpha_0,1}=s_{\alpha_0}+\alpha_0^{\vee}$.
Then by~\cite[p.174]{Bourbaki456}, the alcove
$$\al=\{\lambda\in V\ |\ \cyc{\alpha,\lambda}\geq 0\ \forall
\alpha\in\Delta,\,\cyc{\alpha_0,\lambda}\leq 1\}$$
is a fundamental domain for the action of $W_a$ on $V$. 
Recall that
$W_a=Y(\Talg_{\textrm{sc}})\rtimes W$ (see~\cite[VI\S2,\,Prop
1]{Bourbaki456}).  In particular, $W_a\leq \overline{W}_a$, which implies that
every $\overline{W}_a$-orbit of $V$ contains at least one element of
$\al$.
We write 
$$\widetilde{\Delta}=\Delta\cup\{-\alpha_0\}$$for the affine Dynkin
diagram of $\Galg$ and
$\widetilde{\Delta}_{\min}=\{\alpha\in\widetilde{\Delta}\,|\,n_{\alpha}=1\}$,
with the convention that $n_{-\alpha_0}=1$.
For $\alpha\in\widetilde{\Delta}_{\min}$, we set
$z_{\alpha}=w_{\alpha}w_{0}$, where $w_0$ and $w_{\alpha}$ are the
longest elements of $W$ and $W_{\Delta\backslash\{\alpha\}}$,
respectively (note that $w_{-\alpha_0}=w_0$ and $z_{-\alpha_0}=1$).
Then $z_{\alpha}$ induces an automorphism
of the extended Dynkin diagram $\widetilde{\Delta}$. We define 
$$\cal A=\{z_{\alpha}\,|\,\alpha\in
\widetilde{\Delta}_{\min}\}.$$
Recall that $\cal A$ is isomorphic to the center
$\operatorname{Z}(\Galg_{\Sc})$ as follows. 
By~\cite[Corollaire p.177]{Bourbaki456}, 
we have
$\operatorname{Z}(\Galg_{\Sc})=\{\tilde{\iota}(\om_{\alpha}^{\vee})\,|\,
\alpha\in\widetilde{\Delta}_{\min}\}$. Moreover, for
$z\in\operatorname{Z}(\Galg_{\Sc})$, there is a unique element $w_z\in
W$ (obtained as the projection on $W$ of any element $\omega$ of $W_a$
satisfying $\omega(\al)=y+\al$ for $y\in \Q_{p'}\otimes Y(\Talg)$ such
that $\tilde{\iota}(y)=z$; note that $w_z$ is well-defined because it
does not depend on $y$).  Then \cite[Proposition 6 p.176]{Bourbaki456}
implies that 
$z_{\alpha}=w_{\tilde{\iota}(\om_{\alpha}^\vee)}$
for $\alpha\in\widetilde{\Delta}_{\min}$. Furthermore, the map
\begin{equation}
\operatorname{Z}(\Galg_{\Sc})\rightarrow \cal A,\quad
\widetilde{\iota}(\om_{\alpha}^{\vee})\mapsto z_{\alpha},
\label{eq:isocentre}
\end{equation}
is an isomorphism of groups; see~\cite[VI.\S2.3]{Bourbaki456}.

Now, we write $\widehat{W}_a=\cyc{Y(\Talg_{\ad}),W}$ and
$\Gamma_{\al}$ for the subgroup of $\widehat{W}_a$ which stabilizes
$\al$. Then in~\cite[VI,\S2\, Prop 6]{Bourbaki456}, the following
result is proven.
\begin{proposition}
\label{DescInvariantAlcove}
The group of automorphisms of $\widetilde{\Delta}$ induced by
elements of $W$ is $\cal A$. For $\alpha\in\widetilde{\Delta}_{\min}$,
we set $f_{\alpha}=z_{\alpha}+\om_{\alpha}^\vee$. Then we have
$$\Gamma_{\al}=\{f_{\alpha}\,|\,\alpha\in\widetilde{\Delta}_{\min}\}.$$
Moreover, the map
$$\om^\vee:\cal A\rightarrow Y(\Talg_{\ad})/Y(\Talg_{\Sc}),\,z_{\alpha}\mapsto
\om_{\alpha}^\vee+Y(\Talg_{\Sc}),$$ is an isomorphism of groups.
\end{proposition}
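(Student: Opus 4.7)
The plan is to exploit that $\al$ is a fundamental domain for $W_a$ acting on $V$, combined with the inclusion $W_a\leq\widehat{W}_a$ with quotient $\widehat{W}_a/W_a\simeq Y(\Talg_{\ad})/Y(\Talg_{\Sc})$. It follows that $\Gamma_{\al}$ is a complement to $W_a$ in $\widehat{W}_a$ projecting isomorphically onto this quotient, and each of its elements is determined by the induced permutation of the walls of $\al$, which are in canonical bijection with $\widetilde{\Delta}$.

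For the first assertion, I would verify that $z_{\alpha}=w_{\alpha}w_0\in W$ induces a diagram automorphism of $\widetilde{\Delta}$ exchanging $-\alpha_0$ with $-\alpha$ for each $\alpha\in\widetilde{\Delta}_{\min}$. The key inputs are that $w_0(\Delta)=-\Delta$ and that the longest element $w_{\alpha}$ of the parabolic subgroup $W_{\Delta\backslash\{\alpha\}}$ stabilizes $\Delta\backslash\{\alpha\}$ up to sign while mapping $-\alpha$ to the highest root of the complementary root subsystem, which coincides with $\alpha_0$ precisely when $n_{\alpha}=1$.

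For the second assertion, I would first check directly that $f_{\alpha}=z_{\alpha}+\om_{\alpha}^{\vee}$ satisfies $f_{\alpha}(\al)=\al$ by tracking how each defining inequality $\cyc{\beta,\cdot}\geq 0$ and $\cyc{\alpha_0,\cdot}\leq 1$ transforms. Conversely, any $\omega\in\Gamma_{\al}$ induces a permutation of $\widetilde{\Delta}$ preserving the coefficients $n_{\beta}$, hence stabilizing $\widetilde{\Delta}_{\min}$; so $\omega$ sends $-\alpha_0$ to some $-\alpha$ with $\alpha\in\widetilde{\Delta}_{\min}$, after which $\omega f_{\alpha}^{-1}\in\Gamma_{\al}$ fixes the wall indexed by $-\alpha_0$ and a short argument forces it to equal the identity.

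For the third assertion, $\om^{\vee}$ is well-defined and injective by (\ref{eq:isocentre}), and surjective because both sides have cardinality $|\operatorname{Z}(\Galg_{\Sc})|$. The homomorphism property reduces to checking that the translation part of $f_{\alpha}f_{\beta}$ is congruent to $\om_{\alpha}^{\vee}+\om_{\beta}^{\vee}$ modulo $Y(\Talg_{\Sc})$; expanding via the semidirect product formula yields $z_{\alpha}(\om_{\beta}^{\vee})+\om_{\alpha}^{\vee}$, so the claim reduces to the induced $W$-action on $Y(\Talg_{\ad})/Y(\Talg_{\Sc})$ being trivial. This is the main obstacle and follows from the computation $s_{\gamma}(\om_{\beta}^{\vee})=\om_{\beta}^{\vee}-\cyc{\gamma,\om_{\beta}^{\vee}}\gamma^{\vee}\equiv\om_{\beta}^{\vee}\pmod{Y(\Talg_{\Sc})}$ for each simple reflection $s_{\gamma}$, extended to all of $W$ by generation.
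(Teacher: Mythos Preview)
The paper does not prove this proposition; it simply cites \cite[VI,\,\S2,\,Prop.~6]{Bourbaki456}. Your overall strategy is essentially Bourbaki's: use simple transitivity of $W_a$ on alcoves to exhibit $\Gamma_{\al}$ as a complement to $W_a$ in $\widehat{W}_a$, hence $\Gamma_{\al}\simeq\widehat{W}_a/W_a\simeq Y(\Talg_{\ad})/Y(\Talg_{\Sc})$, and identify its elements through their action on the vertices of $\al$. Your argument for the third assertion (triviality of the $W$-action on $Y(\Talg_{\ad})/Y(\Talg_{\Sc})$ via $s_\gamma(\om_\beta^\vee)\equiv\om_\beta^\vee$) is correct.

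Your direct attack on the first assertion contains a genuine error. You claim that $w_\alpha$ maps $-\alpha$ to the highest root of $\Phi_{\Delta\setminus\{\alpha\}}$, and that the latter equals $\alpha_0$ when $n_\alpha=1$. But $w_\alpha\in W_{\Delta\setminus\{\alpha\}}$ is a product of reflections $s_\beta$ with $\beta\neq\alpha$, each of which preserves the $\alpha$-coefficient; hence $w_\alpha(-\alpha)$ has $\alpha$-coefficient $-1$ and cannot lie in $\Phi_{\Delta\setminus\{\alpha\}}$, nor can any root of that subsystem equal $\alpha_0$ (whose $\alpha$-coefficient is $n_\alpha>0$). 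Note also that $z_\alpha(-\alpha_0)=\alpha\in\widetilde\Delta$, not $-\alpha$. The first assertion is in fact most cleanly \emph{derived} from the second: once $f_\alpha(\al)=\al$, the linear part $z_\alpha$ permutes the directions normal to the walls of $\al$, i.e.\ permutes $\widetilde\Delta$; conversely any $w\in W$ with $w(\widetilde\Delta)=\widetilde\Delta$ lifts, after a suitable translation, to an element of $\Gamma_{\al}$.

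In your backward direction for the second assertion, the claim that the induced permutation of $\widetilde{\Delta}$ preserves the integers $n_\beta$ is unjustified as stated. What is actually needed is simpler and uses the ambient lattice rather than diagram combinatorics: since $\omega\in\widehat{W}_a=Y(\Talg_{\ad})\rtimes W$, the vertex $\omega(0)$ lies in $Y(\Talg_{\ad})=\bigoplus_\gamma\Z\,\om_\gamma^\vee$; but $\omega(0)$ is also a vertex $\om_\beta^\vee/n_\beta$ of $\al$, and integrality forces $n_\beta=1$, i.e.\ $\beta\in\widetilde{\Delta}_{\min}$. Then $f_\beta^{-1}\omega$ fixes $\al$ and the vertex $0$, so it lies in $W$ and stabilises the closed fundamental Weyl chamber, hence equals the identity.
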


Note that, by composition of $\om^{\vee}$ with the isomorphism
defined in Equation~(\ref{eq:isocentre}), we can identified the
quotient $Y(\Talg_{\ad})/Y(\Talg_{\Sc})$ with
$\operatorname{Z}(\Galg_{\Sc})$.

For $\lambda\in V$, we will denote by
$(\lambda_{\alpha})_{\alpha\in\widetilde{\Delta}}$ its affine
coordinates, that is, the unique family of rational numbers such that
$\sum_{\alpha\in\widetilde{\Delta}}\lambda_{\alpha}=1$ and
$\lambda=\sum_{\alpha\in\widetilde{\Delta}}\frac{\lambda_{\alpha}}{n_{\alpha}}\,\om_{\alpha}^{\vee}$,
where $n_{\alpha}$ are the integers defined in
Equation~(\ref{eq:heightelement}) and $\om_{-\alpha_0}^{\vee}=0$.
Note that $\lambda\in\al$ if and only if $\lambda_{\alpha}\geq 0$ for
every $\alpha\in\widetilde{\Delta}$; see~\cite[Corollaire
p.175]{Bourbaki456}.

Now, following~\cite{bonnafequasi}, we define the subgroup $\cal
A_{\Galg}$ of $\cal A$ to be the inverse image of
$Y(\Talg)/Y(\Talg_{\Sc})$ under $\om^{\vee}$. We also define
\begin{equation}
\widetilde{\Delta}_{\min,\Galg}=\{\alpha\in\widetilde{\Delta}_{\min}\,|\,
z_{\alpha}\in\cal A_{\Galg}\}\quad\textrm{and}\quad
\Gamma_{\Galg}=\{f_{\alpha}\,|\,\alpha\in\widetilde{\Delta}_{\min,\Galg}\}.
\label{eq:defGalg}
\end{equation}
Then Bonnaf\'e proves in~\cite[(3.10), Cor. 3.12, Prop.
3.13]{bonnafequasi}

\begin{theorem}
\

\begin{enumerate}
\item For $\alpha\in\widetilde{\Delta}_{\min}$ and
$\lambda=(\lambda_{\beta})_{\beta\in\widetilde{\Delta}}\in\al$, we
have $f_{\alpha}(\lambda)_{\beta}=\lambda_{z_\alpha^{-1}(\beta)}$ for
$\beta\in\widetilde{\Delta}$. 
\item The points $\lambda,\,\mu\in\al$ are in the same
$\overline{W}_a$-orbit if and only if there is $z\in\cal A_{\Galg}$
such that $z(\lambda)-\mu\in Y(\Talg)$.
\item Let $[s]_{\Galg}\in s(\Galg)$ be a semisimple class with
representative $s\in\Galg$ corresponding
to a $\overline{W}_a$-orbit $[\lambda]$ (here, $\lambda$ denotes a
representative in $\al$) on $\al$. Then
$I_{\lambda}=\{\alpha\in\widetilde{\Delta}\,|\,\lambda_{\alpha}=0\}$
is a basis of the root system of $\Cen_{\Galg}(s)^{\circ}$ and the
component group $A_{\Galg}(s)=\Cen_{\Galg}(s)/\Cen_{\Galg}(s)^{\circ}$
is isomorphic to
$$A_{\Galg}(\lambda)=\{z\in\cal A_{\Galg}\,|\,\forall
\alpha\in\widetilde{\Delta},\,\lambda_{z(\alpha)}=\lambda_{\alpha}\}.$$
\end{enumerate}
\label{classdisc}
\end{theorem}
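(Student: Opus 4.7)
The plan is to prove the three parts in sequence, exploiting the fact that $\al$ is a strict fundamental domain for the action of $W_a$ on $V$, the semidirect product decompositions $W_a=Y(\Talg_{\Sc})\rtimes W$ and $\overline{W}_a=Y(\Talg)\rtimes W$, and the splitting provided by Proposition~\ref{DescInvariantAlcove}. For part~(1), I would argue by a direct affine computation: the vertices of $\al$ are the points $v_\beta$ defined by $(v_\beta)_\gamma=\delta_{\beta,\gamma}$, explicitly $v_{-\alpha_0}=0$ and $v_\beta=\om_\beta^\vee/n_\beta$ for $\beta\in\Delta$. Since $f_\alpha\in\Gamma_{\al}$ permutes the vertices of $\al$ and projects onto $z_\alpha\in\cal A$ acting on $\widetilde{\Delta}$, a short case-check (using $n_\alpha=1$ and $f_\alpha(0)=\om_\alpha^\vee=v_\alpha$) yields $f_\alpha(v_\gamma)=v_{z_\alpha(\gamma)}$ for every $\gamma\in\widetilde{\Delta}$. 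The claim then follows by affinity: writing $\lambda=\sum_\gamma\lambda_\gamma v_\gamma$ with $\sum_\gamma\lambda_\gamma=1$ and reindexing, $f_\alpha(\lambda)=\sum_\gamma\lambda_\gamma v_{z_\alpha(\gamma)}=\sum_\beta\lambda_{z_\alpha^{-1}(\beta)}v_\beta$.

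For part~(2), the key input is that $\overline{W}_a/W_a\simeq Y(\Talg)/Y(\Talg_{\Sc})$, which under the isomorphism $\om^\vee$ of Proposition~\ref{DescInvariantAlcove} is identified with $\cal A_{\Galg}$, and the assignment $\alpha\mapsto f_\alpha$ for $\alpha\in\widetilde{\Delta}_{\min,\Galg}$ provides a set-theoretic lift into $\Gamma_{\Galg}$. Given $\lambda,\mu\in\al$ with $\mu=\omega(\lambda)$ for $\omega\in\overline{W}_a$, I would write $\omega=w_a\cdot f_\alpha$ with $w_a\in W_a$ and $\alpha\in\widetilde{\Delta}_{\min,\Galg}$. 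Then $f_\alpha(\lambda)\in\al$ because $f_\alpha$ stabilizes $\al$, and $\mu=w_a(f_\alpha(\lambda))\in\al$; since $W_a$-orbits meet $\al$ in a single point, $\mu=f_\alpha(\lambda)=z_\alpha(\lambda)+\om_\alpha^\vee$, giving $z_\alpha(\lambda)-\mu=-\om_\alpha^\vee\in Y(\Talg)$ since $z_\alpha\in\cal A_{\Galg}$. The converse is immediate, using $z\in W\subset\overline{W}_a$ and $Y(\Talg)\subset\overline{W}_a$.

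For part~(3), the first assertion is essentially the Borel--de Siebenthal description: by Steinberg, $\Cen_{\Galg}(s)^{\circ}$ is generated by $\Talg$ together with the root subgroups $U_\beta$ for $\beta\in\Phi$ satisfying $\beta(s)=1$, which under the identification of Section~\ref{notation} reads $\cyc{\beta,\lambda}\in\Z$. For $\beta\in\Phi^+$, the bounds $0\leq\cyc{\beta,\lambda}\leq\cyc{\alpha_0,\lambda}\leq 1$ force $\cyc{\beta,\lambda}\in\{0,1\}$, and expanding $\beta$ (respectively $\alpha_0-\beta$) in the basis $\Delta$ and using $\lambda_\gamma=n_\gamma\cyc{\gamma,\lambda}$ for $\gamma\in\Delta$ shows that such a $\beta$ has support in $I_\lambda\cap\Delta$ (resp. that $-\alpha_0\in I_\lambda$ and $\alpha_0-\beta$ has support in $I_\lambda\cap\Delta$), exhibiting $I_\lambda$ as the simple roots of the claimed root system. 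For the component group, I will compute $\Stab_{\overline{W}_a}(\lambda)$: projection onto $W$ along the semidirect decomposition restricts to an isomorphism $\Stab_{\overline{W}_a}(\lambda)\simeq W_s$, where $W_s=\{w\in W\,:\,w(\lambda)\equiv\lambda\pmod{Y(\Talg)}\}$ is the $W$-stabilizer of $s\in\Talg$, and Steinberg yields $W_s/W(\Cen_{\Galg}(s)^{\circ})\simeq A_{\Galg}(s)$; meanwhile $\Stab_{W_a}(\lambda)$ is generated by reflections in the walls of $\al$ through $\lambda$, hence equals the parabolic subgroup of $W_a$ attached to $I_\lambda$, i.e. the Weyl group of $\Cen_{\Galg}(s)^{\circ}$. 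Using the section $z_\alpha\mapsto f_\alpha$ together with the strict fundamental domain property (if $w_a\cdot f_\alpha$ fixes $\lambda$ then $f_\alpha(\lambda)\in\al\cap W_a\cdot\lambda=\{\lambda\}$), one identifies $\Stab_{\overline{W}_a}(\lambda)/\Stab_{W_a}(\lambda)$ with $\{z\in\cal A_{\Galg}\,:\,f_z(\lambda)=\lambda\}$, which by part~(1) is exactly $A_{\Galg}(\lambda)$. The main obstacle will be this last identification: one has to handle the fact that elements of $\Stab_{\overline{W}_a}(\lambda)$ need not themselves lie in $\Gamma_{\Galg}$, and to reduce them to the normal form $w_a\cdot f_\alpha$ by repeated use of the strict fundamental domain property; parts~(1) and~(2), although notationally dense, reduce to routine affine bookkeeping once Proposition~\ref{DescInvariantAlcove} is in hand.
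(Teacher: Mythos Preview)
The paper does not give its own proof of this theorem: it is stated with the attribution ``Bonnaf\'e proves in~\cite[(3.10), Cor.~3.12, Prop.~3.13]{bonnafequasi}'' and no argument is supplied. So there is nothing in the paper to compare your proposal against.

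That said, your sketch is a faithful reconstruction of the standard argument (essentially what one finds in Bonnaf\'e and, for the simply-connected case, in Bourbaki and Deriziotis). Part~(1) via the vertex permutation $f_\alpha(v_\gamma)=v_{z_\alpha(\gamma)}$ and affinity is exactly the content of~\cite[(3.10)]{bonnafequasi}. Part~(2) via the coset decomposition $\overline{W}_a=W_a\cdot\Gamma_{\Galg}$ and the strict fundamental domain property of $\al$ is the argument behind~\cite[Cor.~3.12]{bonnafequasi}. Part~(3) combines Steinberg's connectedness theorem for centralizers with the description of $\Stab_{W_a}(\lambda)$ as the parabolic subgroup on $I_\lambda$, which is~\cite[Prop.~3.13]{bonnafequasi}. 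The ``obstacle'' you flag in the last identification is not really an obstacle: once you know $\overline{W}_a=W_a\cdot\Gamma_{\Galg}$ with $\Gamma_{\Galg}\cap W_a=\{1\}$ (Proposition~\ref{DescInvariantAlcove}) and that $\Gamma_{\Galg}$ stabilizes $\al$, the reduction to normal form is a single step, not an iteration.
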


For $\alpha\in\widetilde{\Delta}_{\min}$, we write 
\begin{equation}
\label{eq:Inv}
V_{\alpha}=\{v\in V\,|\,f_{\alpha}(v)=v\}
\end{equation} 
for the affine subspace of $V$ of the invariants under $f_{\alpha}$.

\begin{lemma} Let $\alpha,\,\alpha'\in\widetilde{\Delta}_{\min}$. If
$\cyc{z_{\alpha}}=\cyc{z_{\alpha'}}$, then $V_{\alpha}=V_{\alpha'}$.
Moreover,
if $\Omega_{\alpha}$ denotes
the set of $\cyc{z_{\alpha}}$-orbits on $\widetilde{\Delta}$, then 
$$\dim(V_{\alpha})=|\Omega_{\alpha}|-1.$$
In Table~\ref{tab:DimInvariant}, we explicitly give the dimension of
$V_{\alpha}$ for every irreducible extended affine Dynkin diagram. For
the notation, we use the labelling of Bourbaki~\cite[Planche
I-IX]{Bourbaki456}. In particular, note that
$z_{\alpha_i}(-\alpha_0)=\alpha_i$. 
\label{DimInvariant}
\end{lemma}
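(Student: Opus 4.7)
My plan is to deduce the lemma directly from Proposition~\ref{DescInvariantAlcove} and part~(1) of Theorem~\ref{classdisc}, and then to reduce the table to an orbit count on each extended Dynkin diagram.

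First, for the claim $V_{\alpha}=V_{\alpha'}$, I would use that Proposition~\ref{DescInvariantAlcove} equips $\Gamma_{\al}=\{f_{\beta}\,|\,\beta\in\widetilde{\Delta}_{\min}\}$ with a group structure and, through the map $z_{\beta}\mapsto f_{\beta}$, provides a group isomorphism $\cal A\simeq \Gamma_{\al}$. Hence the hypothesis $\cyc{z_{\alpha}}=\cyc{z_{\alpha'}}$ is equivalent to $\cyc{f_{\alpha}}=\cyc{f_{\alpha'}}$. Since a point of $V$ is fixed by an affine automorphism $f$ iff it is fixed by every power of $f$, one has $V_{\alpha}=\Fix(f_{\alpha})=\Fix(\cyc{f_{\alpha}})$, and the analogous identity for $\alpha'$; thus $V_{\alpha}=V_{\alpha'}$.

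For the dimension formula, I would invoke Theorem~\ref{classdisc}(1), which asserts $f_{\alpha}(\lambda)_{\beta}=\lambda_{z_{\alpha}^{-1}(\beta)}$ for every $\lambda\in\al$ and $\beta\in\widetilde{\Delta}$. Both sides are affine functions of $\lambda\in V$ (the affine coordinates depend affinely on $\lambda$, and $f_{\alpha}$ is an affine transformation), and the alcove $\al$ has non-empty interior in $V$, so the identity extends to all of $V$. Therefore $\lambda\in V_{\alpha}$ exactly when the tuple $(\lambda_{\beta})_{\beta\in\widetilde{\Delta}}$ is constant on the $\cyc{z_{\alpha}}$-orbits in $\widetilde{\Delta}$. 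Via the affine-coordinate map, $V$ is identified with the hyperplane $\{(x_{\beta})\in\Q^{\widetilde{\Delta}}\,|\,\sum_{\beta}x_{\beta}=1\}$, so intersecting with the subspace $\Q^{\Omega_{\alpha}}$ of orbit-constant tuples yields $\dim V_{\alpha}=|\Omega_{\alpha}|-1$, as required.

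Finally, Table~\ref{tab:DimInvariant} is produced by a case-by-case inspection. Running through the irreducible types using the Bourbaki planches, I would read off from Proposition~\ref{DescInvariantAlcove} the diagram automorphism of $\widetilde{\Delta}$ induced by each generator $z_{\alpha}$ (equivalently each element of $\Zz(\Galg_{\Sc})$ under \eqref{eq:isocentre}), and simply count orbits. This is routine: the only mildly delicate point is to distinguish, in types where $|\cal A|$ is not prime (notably $D_{2n}$, with $\cal A\cong\Z/2\times\Z/2$, and $A_{n-1}$ with $n$ composite), elements $z_{\alpha}$ of equal order that generate different cyclic subgroups; the first part of the lemma guarantees that $V_{\alpha}$ depends only on $\cyc{z_{\alpha}}$, so it suffices to list the cyclic subgroups of $\cal A$ once and to compute the orbit number of a chosen generator of each.
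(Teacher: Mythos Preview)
Your proof is correct. The paper states this lemma without proof, so there is nothing to compare against; your argument supplies exactly the justification the paper omits. The two key steps---identifying $\Fix(f_{\alpha})$ with $\Fix(\cyc{f_{\alpha}})$ via the isomorphism $\cal A\simeq\Gamma_{\al}$ of Proposition~\ref{DescInvariantAlcove}, and reading off the dimension from the orbit description of affine coordinates furnished by Theorem~\ref{classdisc}(1)---are sound, and the extension of the coordinate identity from $\al$ to all of $V$ by affinity is a clean way to handle it.
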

\begin{table}
$$
\renewcommand{\arraystretch}{1.6}
\begin{array}{c|c|c|c|c}
\textrm{Type}&|\cal A|&\cal A&z_{\alpha}&\dim(V_{\alpha})\\
\hline
A_n&n+1&\cyc{z_{\alpha_1}}&\mbox{\scriptsize $d|n+1$},\quad z_{\alpha_1}^d&
d-1\\
\hline
B_n&2&\cyc{z_{\alpha_1}}&z_{\alpha_1}&n-1\\
\hline
C_n&2&\cyc{z_{\alpha_n}}&z_{\alpha_n}&\lfloor \frac{n}{2}\rfloor\\
\hline
D_{2n+1}&4&\cyc{z_{\alpha_n}}&z_{\alpha_n},\,z_{\alpha_{n-1}}=
z_{\alpha_n}^{-1}&n-1\\
&&&z_{\alpha_1}=z_{\alpha_n}^2&2n-1\\
\hline
D_{2n}&4&\cyc{z_{\alpha_{n-1}}}\times
\cyc{z_{\alpha_n}}&z_{\alpha_n},\,z_{\alpha_{n-1}}&n\\
&&&z_{\alpha_1}=z_{\alpha_n}z_{\alpha_{n-1}}&2n-2\\
\hline
E_6&3&\cyc{z_{\alpha_1}}&z_{\alpha_1},\,z_{\alpha_1}^{-1}&2\\
\hline
E_7&2&\cyc{z_{\alpha_1}}&z_{\alpha_1}&4\\
\hline
E_8&1&\{1\}&1&8\\
\hline
G_2&1&\{1\}&1&2\\
\hline
F_4&1&\{1\}&1&4\\
\end{array}$$
\caption{The dimension of the invariant subspace}
\label{tab:DimInvariant}
\end{table}
\subsection{Fixed-points under a Frobenius map}\label{sec:fixed-frob}
Now, we suppose that $\Galg$ is defined over the finite field $\F_q$
(for $q$ a $p$-power) and we denote by $F:\Galg\rightarrow\Galg$ the
corresponding Frobenius map. We assume that $F$ is the composition of a
split Frobenius with a graph automorphism coming from a symmetry~$\rho$ 
of $\Delta$ (which could be trivial). We suppose that the maximal torus $\Talg$
of $\Galg$ is chosen to be $F$-stable and contained in an $F$-stable Borel
subgroup $\Balg$. Moreover, $F$ induces Frobenius maps on $\Galg_{\Sc}$ and 
$\Galg_{\ad}$ (also denoted by $F$) such that $F$ and the isogenies
$\pi_{\Sc}$ and $\pi_{\ad}$ commute.
We can define an $F$-action on $X(\Talg)$ and $Y(\Talg)$ by
setting
$$F(\chi)(t)=\chi(F(t))\quad \textrm{and}\quad
F(\gamma)(x)=F(\gamma(x)),$$
for $\chi\in
X(\Talg)$, $\gamma\in Y(\Talg)$, $t\in
\Talg$ and $x\in\overline{\F}_p^{\times}$.
The $F$-action on $Y(\Talg)$ extends naturally to 
$\Q_{p'}/\Z\otimes_{\Z} Y(\Talg)$ and
to $(\Q_{p'}\otimes_{\Z}
Y(\Talg))/Y(\Talg)$ and is compatible with the isomorphisms of
Equation~(\ref{eq:tore}).
Since the set of $F$-stable semisimple
classes of $\Galg$ is in bijection
with the set $(\Talg/W)^F$; see~\cite[3.7.2]{carter2}, 
it follows that it is in bijection with the set
$(\Q_{p'}\otimes_{\Z} Y(\Talg)/\overline{W}_a)^F$.

We put $\overline{W}_{a,q}=F^{-1}(Y(\Talg))\rtimes W$. Then the map
$$F:\overline{W}_{a,q}\rightarrow\overline{W}_a,\,yw\mapsto
F(y)F(w),$$ is an isomorphism of groups. Note that $\overline{W}_a$ is a
subgroup of $\overline{W}_{a,q}$. Furthermore, for $v\in V$ and $g\in
\overline{W}_{a,q}$,  we have 
\begin{equation}
\label{eq:permutation}
F(g)(F(v))=F(g(v)).
\end{equation}
We define $W_{a,q}=F^{-1}(Y(\Talg_{\Sc}))\rtimes W$ and
$\widehat{W}_{a,q}=F^{-1}(Y(\Talg_{\ad}))\rtimes W$.
In particular, we have $W_{a,q}\leq
\overline{W}_{a,q}\leq\widehat{W}_{a,q}$. Moreover,
Equation~(\ref{eq:permutation}) implies that the set
$$\al_q=\{v\in V\,|\,F(v)\in\al\}$$ is a fundamental region for the
$W_{a,q}$-action on $V$. We denote by $h_q:V\rightarrow V$
the homothety with origin $0$ and ratio $\frac{1}{q}$. Recall that $F$
acts on $V$ by 
$h_q^{-1}\cdot F_0$, where $F_0:V\rightarrow V$ is a linear transformation 
defined by
$F_0(\alpha)=\rho^{-1}(\alpha)$ for all $\alpha\in\Delta.$\label{defF0} 
Note that $V$ can be regarded as a euclidean space on which the
elements of $\widehat{W}_{a}$ and $F_0$ act as isometries. In the
following, we will denote by $d_0$ the associated metric.
Since $\cyc{F_0(\chi),\gamma}=\cyc{\chi, F_0(\gamma)}$, we deduce that
$F_0(\om_{\alpha}^\vee)=\om_{\rho^{-1}(\alpha)}^{\vee}$ for every
$\alpha\in\Delta$. For $\alpha\in\widetilde{\Delta}$, we set
$$\om_{\alpha,q}^\vee=F^{-1}(\om_{\alpha}^\vee)=\frac{1}{q}
\cdot\om_{\rho(\alpha)}^\vee.$$ 
Note that $n_{\rho^{-1}(\alpha)}=n_{\alpha}$. Hence,
$\widetilde{\Delta}_{\min}$ is $\rho^{-1}$-stable. For $\alpha\in
\widetilde{\Delta}_{\min}$, we define 
$$
f_{\alpha,q}=z_{\rho(\alpha)}+\om_{\alpha,q}^{\vee}.$$
\begin{lemma}
Let $\alpha\in\widetilde{\Delta}_{\min}$. Then
the following diagram is commutative.
$$
\xymatrix{
   \al\ar[rrr]^{f_{\alpha}} \ar[d]^{F^{-1}} &&&
   \al\ar[d]^{F^{-1}}\\
   \al_q \ar[rrr]^ {f_{\alpha,q}}&&&
   \al_q  
   } 
$$
\label{diag1}
\end{lemma}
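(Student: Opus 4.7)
The plan is to unfold both compositions in the diagram using the decomposition $F = h_q^{-1}\circ F_0$, and reduce the commutativity to a single Weyl-group conjugation identity. Writing $F^{-1} = F_0^{-1}\circ h_q$, so that $F^{-1}(v) = \tfrac{1}{q}F_0^{-1}(v)$ for every $v\in V$, I would first evaluate both sides of the diagram on an arbitrary $v\in V$, using the affine decompositions $f_{\alpha} = z_{\alpha} + \om_{\alpha}^\vee$ and $f_{\alpha,q} = z_{\rho(\alpha)} + \om_{\alpha,q}^\vee$ together with the identities $F_0^{-1}(\om_{\alpha}^\vee) = \om_{\rho(\alpha)}^\vee$ and $\om_{\alpha,q}^\vee = \tfrac{1}{q}\om_{\rho(\alpha)}^\vee$. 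The two translational parts match on the nose, and after cancelling them the commutativity becomes equivalent to
\[
F_0^{-1}\circ z_{\alpha} = z_{\rho(\alpha)}\circ F_0^{-1}
\]
as linear automorphisms of $V$, equivalently $F_0^{-1}z_{\alpha}F_0 = z_{\rho(\alpha)}$ inside $W$.

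This Weyl-group conjugation identity is the main content of the lemma. Since $F_0$ permutes $\Delta$ by $\rho^{-1}$, the induced conjugation action on $W$ sends $s_\beta\mapsto s_{\rho^{-1}(\beta)}$, so that conjugation by $F_0^{-1}$ sends $s_\beta\mapsto s_{\rho(\beta)}$. Applying this to the factorization $z_\alpha = w_\alpha w_0$, one uses two standard facts: the longest element $w_0$ of $W$ is stable under any graph automorphism of $\Delta$, so $F_0^{-1} w_0 F_0 = w_0$; and the longest element of the parabolic subgroup $W_{\Delta\setminus\{\alpha\}}$ is conjugated by $F_0^{-1}$ to the longest element $w_{\rho(\alpha)}$ of $W_{\Delta\setminus\{\rho(\alpha)\}}$. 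Multiplying the two gives $F_0^{-1}z_\alpha F_0 = w_{\rho(\alpha)}w_0 = z_{\rho(\alpha)}$, closing the argument at the level of maps on $V$.

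Finally, once equality of the two compositions holds on all of $V$, combining it with the stability of $\al$ under $f_\alpha$ (Proposition~\ref{DescInvariantAlcove}) and the fact that $F^{-1}$ restricts to a bijection $\al\to\al_q$ forces $f_{\alpha,q}$ to stabilize $\al_q$, so all four arrows in the square are well defined and the diagram commutes. I expect the only genuine obstacle to be the Weyl-group identity $F_0^{-1}z_\alpha F_0 = z_{\rho(\alpha)}$; once it is in hand, everything else is a mechanical unpacking of the definition $F = h_q^{-1}\circ F_0$ and of the affine formulas for $f_\alpha$ and $f_{\alpha,q}$.
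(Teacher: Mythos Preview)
Your proposal is correct and follows essentially the same route as the paper: both unfold the affine maps via $F=h_q^{-1}F_0$, reduce the commutativity to the Weyl-group identity $F_0^{-1}z_\alpha F_0=z_{\rho(\alpha)}$, and verify this identity from the factorization $z_\alpha=w_\alpha w_0$ together with $s_{\rho(\beta)}=F_0^{-1}s_\beta F_0$. The paper treats the case $\alpha=-\alpha_0$ separately as trivial and checks that $F_0^{-1}w_\alpha F_0$ is the longest element of $W_{\Delta\setminus\{\rho(\alpha)\}}$ by its action on positive roots, but the substance of the argument is the same as yours.
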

\begin{proof}
For every $\alpha\in\widetilde{\Delta}_{\min}$, we set
$I_{\alpha}=\Delta\backslash\{\alpha\}$ and
$\Phi_{\alpha}=W_{I_{\alpha}}(I_{\alpha})$ the corresponding root
subsystem with basis $I_{\alpha}$. Since 
$f_{-\alpha_0}=f_{-\alpha_0,q}=1$, the lemma is trivially true for
$-\alpha_0$. Fix now $\alpha\in\Delta$ with $n_{\alpha}=1$. 
For every $x\in V$, we have 
$$F^{-1}f_{\alpha}F(x)=\frac{1}{q}\cdot w_{\rho(\alpha)}^\vee
+F_0^{-1}z_{\alpha}F_0(x).$$
Moreover, we have
$\rho(I_{\alpha})=I_{\rho_{\alpha}}$ and
$\Phi_{\rho(\alpha)}=\rho(\Phi_{\alpha})$. If $w_{\alpha}$ and
$w_{\rho(\alpha)}$ are the longest
elements of $W_{I_{\alpha}}$ and $W_{I_{\rho(\alpha)}}$,  
then $w_{\rho(\alpha)}=\rho(w_{\alpha})$. Indeed, for every $\beta\in\Delta$,
we have $s_{\rho(\beta)}=F_0^{-1} s_{\beta} F_0$
which implies that $\rho(w_{\alpha})=F_0^{-1} w_{\alpha}F_0$. Now,
if $\beta\in\Phi_{\rho(\alpha)}^+$, then
$\rho(w_{\alpha})(\beta)=F_0^{-1}(-F_0(\beta))=-\beta$, as required.
So, we have
$F_0^{-1}z_{\alpha} F_0=z_{\rho(\alpha)},$
and the result follows.
\end{proof}

Note that $W_{a,q}$ is the affine Weyl group generated by $s_{\alpha}$
for $\alpha\in\Delta$
and by the affine reflection
$s_{\alpha_0,1/q}=s_{\alpha_0}+\frac{1}{q}\alpha_0^{\vee}$. We denote by
$\cal H_q$ the collection of all hyperplanes defined by the affine
reflections of $W_{a,q}$.
Moreover, $W_a$ is a subgroup of $W_{a,q}$. It follows that $\al$ is a
union of certain transforms of $\al_q$ under $W_{a,q}$. We write $E_q$ for
the set of elements $\omega\in W_{a,q}$ such that
$\omega(\al_q)\subseteq\al$ and define
\begin{equation}
\Omega_q=\{\omega(\al_q)\,|\,\omega\in E_q\}.
\label{eq:defOmega}
\end{equation}
We now can prove

\begin{theorem}
Let $\alpha\in\widetilde{\Delta}_{\min}$. We define
$$M_{\alpha,q}=\{\omega\in E_q\,|\,
f_{\alpha}\left(\omega(\al_q)\right)=\omega(\al_q)\}.$$
Let $V_{\alpha}$ be the subspace of invariants as in
Equation~(\ref{eq:Inv}). If $V_{\alpha}$ is contained in some
hyperplanes
of $\cal H_q$, then $|M_{\alpha,q}|=0$. Otherwise, we have
$|M_{\alpha,q}|=q^{\dim V_{\alpha}}.$
\label{alovefixe}
\end{theorem}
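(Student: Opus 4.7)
The plan is to reformulate $M_{\alpha,q}$ geometrically as counting the $W_{a,q}$-alcoves contained in $\al$ whose interior meets the fixed-point set $V_\alpha$ of $f_\alpha$. First I would check that $f_\alpha$ permutes the hyperplane arrangement $\cal H_q$ (using that $f_\alpha=z_\alpha+\om_\alpha^\vee$ has rotational part $z_\alpha\in W$ and translates by the integral vector $\om_\alpha^\vee$, in the sense that $\cyc{\beta,\om_\alpha^\vee}\in\Z$ for every $\beta\in\Phi$), hence $f_\alpha$ induces an action on $\{\omega(\al_q)\,|\,\omega\in E_q\}$.

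The key geometric observation is that for $\omega\in E_q$, one has $f_\alpha(\omega(\al_q))=\omega(\al_q)$ if and only if $V_\alpha$ meets the interior of $\omega(\al_q)$. If $v\in V_\alpha\cap\omega(\al_q)^\circ$, then $v=f_\alpha(v)$ lies in both $\omega(\al_q)$ and $f_\alpha(\omega(\al_q))$, and the disjointness of $W_{a,q}$-alcove interiors forces these two alcoves to coincide. Conversely, if $f_\alpha$ stabilizes the simplex $\omega(\al_q)$, it restricts to an isometry of finite order (since $\Gamma_\al$ is finite) and therefore fixes the centroid of this simplex, which lies in $V_\alpha\cap\omega(\al_q)^\circ$.

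The degenerate case is then immediate: if $V_\alpha\subseteq H$ for some $H\in\cal H_q$, then $V_\alpha$ sits on a common wall of alcoves and meets no interior, so $|M_{\alpha,q}|=0$. In the generic case, I would note that $V_\alpha\cap\al$ is itself a $d$-simplex (with $d=\dim V_\alpha$) whose vertices are the centroids of the $\cyc{z_\alpha}$-orbits on the vertex set of $\al$, and that $|M_{\alpha,q}|$ equals the number of top-dimensional cells that $\cal H_q$ cuts from the interior of this simplex.

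The main obstacle is the cell count in the generic case. I would address it through a lattice/volume argument: the translations in $W_{a,q}$ preserving $V_\alpha$ form a full-rank sublattice in the direction space of $V_\alpha$, of covolume equal to $q^{-d}$ times the covolume of its ``integral'' analogue in $W_a$, and a symmetry-plus-volume computation shows that $V_\alpha\cap\al$ is tiled into $q^d$ pieces of equal $d$-volume. Alternatively, applying the homothety $h_q$ and invoking the commutative diagram of Lemma~\ref{diag1} (which transports $f_\alpha$ on $\al$ to $f_{\alpha,q}$ on $\al_q$) reduces the problem to an integral-scale arrangement, where the cell count in a $q$-fold dilation of the fundamental simplex $V_\alpha\cap\al_q$ is exactly $q^d$.
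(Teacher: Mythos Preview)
Your overall strategy matches the paper's: show $f_\alpha$ permutes the small alcoves, characterize $M_{\alpha,q}$ geometrically via interiors meeting $V_\alpha$ (the centroid argument is exactly what the paper uses), handle the degenerate case, and finish with a volume argument on $V_\alpha\cap\al$. The first three steps are correct; the gap is in the last. You assert that the pieces $\omega(\al_q)\cap V_\alpha$ for $\omega\in M_{\alpha,q}$ have equal $d$-volume, but neither mechanism you propose establishes this. In option (a), the covolume ratio of the translation sublattices does not by itself imply that the restricted arrangement $\cal H_q|_{V_\alpha}$ has congruent chambers; the stabilizer of $V_\alpha$ in $W_{a,q}$ need not act transitively on those chambers inside $\al$, and it is not clear that its action on $V_\alpha$ is that of an affine reflection group. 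In option (b) the object $V_\alpha\cap\al_q$ is wrong: for $\alpha\neq-\alpha_0$ the alcove $\al_q$ is never $f_\alpha$-stable, so $V_\alpha$ misses its interior and $V_\alpha\cap\al_q$ is at most a proper face; what Lemma~\ref{diag1} actually relates is $\al^{f_\alpha}$ and $\al_q^{f_{\alpha,q}}$, the fixed set of a \emph{different} affine map on $\al_q$.

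The paper supplies the missing congruence via a conjugation identity. Let $r_\alpha\in W_{a,q}$ be the unique element with $r_\alpha(\al_q)=f_\alpha(\al_q)$; then $r_\alpha^{-1}f_\alpha$ stabilizes $\al_q$, hence equals $f_{\widetilde\alpha,q}$ for some $\widetilde\alpha\in\widetilde{\Delta}_{\min}$ depending only on $\alpha$. For every $\omega\in M_{\alpha,q}$, simple transitivity of $W_{a,q}$ on alcoves forces $\omega^{-1}f_\alpha\omega=r_\alpha^{-1}f_\alpha=f_{\widetilde\alpha,q}$, so $\omega^{-1}$ is an isometry from $\omega(\al_q)^{f_\alpha}$ onto the single reference piece $\al_q^{f_{\widetilde\alpha,q}}$. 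All pieces are therefore congruent; now Lemma~\ref{diag1} gives $\vol(\al_q^{f_{\widetilde\alpha,q}})=q^{-d}\vol(\al^{f_{\widetilde\alpha}})$, and a short case check (using that $\cal A$ is cyclic away from type $D_{2n}$, where one conjugates by $F_0$ instead) shows $\vol(\al^{f_{\widetilde\alpha}})=\vol(\al^{f_\alpha})$, whence $|M_{\alpha,q}|=q^{d}$.
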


\begin{proof}
Since $\widehat{W}_a$ is a subgroup of $\widehat{W}_{a,q}$, it follows
that $f_{\alpha}\in\widehat{W}_{a,q}$. In particular, $f_{\alpha}$
permutes the elements of $\cal H_q$ and also the set of alcoves
$\omega(\al_q)$ for $\omega\in W_{a,q}$. Hence, $f_{\alpha}$ permutes
the elements of $\Omega_q$ (because for $\omega\in E_q$, we have
$f_{\alpha}(\omega(\al_q))\subseteq\al$).  
We denote by $r_{\alpha}\in W_{a,q}$ the element such that
\begin{equation}
\label{eq:ralpha}
r_{\alpha}(\al_q)=f_{\alpha}(\al_q).
\end{equation}
Let $\omega\in E_q$. Then we have
\begin{eqnarray*}
\omega(\al_q)&=&f_{\alpha}(\omega(\al_q))\\
&=&f_{\alpha}\omega f_{\alpha}^{-1}\left(f_{\alpha}(\al_q)\right)\\
&=&f_{\alpha}\omega f_{\alpha}^{-1} r_{\alpha}(\al_q),
\end{eqnarray*}
Furthermore, the group $\widehat{W}_{a,q}$ normalizes $W_{a,q}$, which
implies that $f_{\alpha}\omega f_{\alpha}^{-1}\in W_{a,q}$. In
particular, we
have $f_{\alpha}\omega f_{\alpha}^{-1}r_{\alpha}\in W_{a,q}$. 
However, by~\cite[VI.\S2.1]{Bourbaki456}, the group 
$W_{a,q}$ acts simply-transitively on the set of alcoves
$\{\omega(\al_q)\,|\,\omega\in W_{a,q}\}$. It follows that
$f_{\alpha}\omega f_{\alpha}^{-1}r_{\alpha}=\omega$, which implies
\begin{equation}
\label{eq:com}
\omega^{-1}f_{\alpha}\omega =r_{\alpha}^{-1}f_{\alpha}
\end{equation}
Note that $r_{\alpha}^{-1}f_{\alpha}(\al_q)=\al_q$ and
$r_{\alpha}^{-1}f_{\alpha}\in \widehat{W}_{a,q}$.
Proposition~\ref{DescInvariantAlcove} implies that there is
$\widetilde{\alpha}\in\widetilde{\Delta}_{\min}$ such that 
$f_{\widetilde{\alpha},q}=r_{\alpha}^{-1}f_{\alpha}$.
We define
\begin{equation}
\label{eq:alphatilde}
m:\widetilde{\Delta}_{\min}\rightarrow\widetilde{\Delta}_{\min},\,\alpha\mapsto\widetilde{\alpha}.
\end{equation}
Hence, the following diagram is commutative
$$
\xymatrix{
   \al_q \ar[rrr]^{\omega} \ar[d]^{f_{\widetilde{\alpha},q}} &&&
   \omega(\al_q)\ar[d]^{f_{\alpha}}\\
   \al_q \ar[rrr]^ {\omega}&&&
   \omega(\al_q)  
   } 
$$
For $A\subseteq V$ and $f:V\rightarrow V$, we denote by $A^f$ the subset
of elements of $A$ invariant under $f$.
So, Equation~(\ref{eq:com}) implies that
$\omega:\al_q^{f_{\widetilde{\alpha},q}}\rightarrow
\omega(\al_q)^{f_{\alpha}}$ is bijective. Thus the sets
$\al_q^{f_{\widetilde{\alpha},q}}$ and $\omega(\al_q)^{f_{\alpha}}$ are
isometric.
Let $x\in V_{\alpha}$. Suppose that $x$ lies in the interior of some
$\omega(\al_q)$ for some $\omega\in E_q$. Then $x=f_{\alpha}(x)$ lies in
the interior of $f(\omega(\al_q))$, which implies that
$f_{\alpha}(\omega(\al_q))=\omega(\al_q)$. Conversely, if
$\omega(\al_q)$ is $f_{\alpha}$-invariant, then the interior of
$\omega(\al_q)$ contains elements of $V_{\alpha}$ (for example, the
isobarycentre of the simplex $\omega(\al_q)$, because $f_{\alpha}$ is an
affine map). It follows that
$M_{\alpha,q}=\emptyset$ if and only if $V_{\alpha}$ is contained in some
hyperplane of $\cal H_{q}$.

Suppose that $V_{\alpha}$ is not contained in some hyperplane of $\cal
H_q$. For $H\in\cal H_q$, consider the affine space $V_{\alpha}\cap H$.
Then $\dim(V_{\alpha}\cap H)<\dim V_{\alpha}$ (otherwise,
$V_{\alpha}$ would be contained in $H$). This implies that
\begin{equation}
\label{eq:vol1}
\vol(\al^{f_{\alpha}})=
\vol\left(\al^{f_{\alpha}}\backslash \bigcup_{H\in
H_q,\,H\cap\al\neq\emptyset}H\right).
\end{equation}
Furthermore, we have
\begin{equation}
\label{eq:vol2}
\al^{f_{\alpha}}\backslash \bigcup_{H\in
H_q,\,H\cap\al\neq\emptyset}H=\bigsqcup_{\omega\in
M_{\alpha,q}}\overset{\circ}{\omega(\al_q)^{f_{\alpha}}}.
\end{equation}
Since $\vol(\omega(\al_q)^{f_{\alpha}})=\vol(\overset{\circ}
{\omega(\al_q)^{f_{\alpha}}})$ for every $\omega\in M_{\alpha,q}$, 
Equations~(\ref{eq:vol1}) and~(\ref{eq:vol2}) imply that
\begin{equation}
\vol(\al^{f_{\alpha}})=\sum_{\omega\in
M_{\alpha,q}}\vol\left(\omega(\al_q)^{f_{\alpha}}\right).
\label{eq:vol3}
\end{equation}
Moreover, the sets $\al_q^{f_{\widetilde{\alpha},q}}$ and
$\omega(\al_q)^{f_{\alpha}}$ are isometric. Thus they have the same
volume, and Equation~(\ref{eq:vol3}) implies that
\begin{equation}
|M_{\alpha,q}|=\frac{\vol\left(\al^{f_{\alpha}}\right)}
{\vol\left(\al_q^{f_{\widetilde{\alpha},q}}\right)}.
\label{eq:vol4}
\end{equation}
Thanks to Lemma~\ref{diag1}, we have
$F(\al_q^{f_{\widetilde{\alpha},q}})=\al^{f_{\widetilde{\alpha}}}$.
By Equation~(\ref{eq:com}) and Lemma~\ref{diag1}, $f_{\alpha}$ and
$f_{\widetilde{\alpha}}$ are conjugate (in the group of affine
transformations of $V$). 
Hence, $z_{\alpha}$ and
$z_{\widetilde{\alpha}}$ have the same order. 
If $\Galg$ is not of type $D_{2n}$, then
$\cal A$ is cyclic. This implies that
$\cyc{z_{\alpha}}=\cyc{z_{\widetilde{\alpha}}}$ and thanks to Lemma~\ref{DimInvariant}, we have
$V_{\alpha}=V_{\widetilde{\alpha}}$. Hence, we have 
$\al^{f_{\alpha}}=\al^{f_{\widetilde{\alpha}}}$.
If $\Galg$ is of type $D_{2n}$,
then the invariant subspaces of $f_{\alpha}$ and $f_{\widetilde{\alpha}}$
have the same dimension (because $f_{\alpha}$ and
$f_{\widetilde{\alpha}}$ are conjugate). Table~\ref{tab:DimInvariant}
implies that there is $i\geq 0$ such that
$\widetilde{\alpha}=\rho^i(\alpha)$.
Then $f_{\widetilde{\alpha}}=F_0^{-i}f_{\alpha}F_0^i$, and
$\al^{f_{\widetilde{\alpha}}}$ and $\al^{f_\alpha}$ are isometric.
We have proven that
$$\vol\left(F(\al_q^{f_{\widetilde{\alpha},q}})\right)=\vol\left(\al^{f_{\alpha}}\right).$$
Since $F=h_q^{-1} F_0$, we deduce that
$\vol\left(F(\al_q^{f_{\widetilde{\alpha},q}})\right)=q^{\dim(V_{\alpha})}\vol\left(\al_q^{f_{\widetilde{\alpha},q}}\right)$,
and it follows that
$\vol\left(\al_q^{f_{\widetilde{\alpha},q}}\right)=\frac{1}{q^{\dim(V_{\alpha})}}\vol(\al^{f_{\alpha}})$. 
Now, the result follows from Equation~(\ref{eq:vol4}).
\end{proof}

\begin{remark}
If $V_{\alpha}$ is contained in some hyperplane $H$ of $\cal H_q$,
then $H$ is not a wall of $\al$. Indeed, the walls of
$\al$ are the hyperplanes $H_{\beta}=\{\lambda\in
V\,|\,\cyc{\beta,\lambda}=m_{\beta}\}$ for $\beta\in\widetilde{\Delta}$,
where $m_{\beta}=0$ for $\beta\in \Delta$ and $m_{-\alpha_0}=-1$.
Let $\beta\in\widetilde{\Delta}$. We then define the element
$\lambda^{\alpha}$ by setting
$\lambda_{z_{\alpha}^i(\beta)}^{\alpha}=\frac{1}{|\cyc{
z_{\alpha}}\cdot \beta|}$ for all $i\geq 0$
 and $\lambda_{\gamma}^{\alpha}=0$ for
$\gamma\notin \cyc{z_{\alpha}}\cdot\beta$.
Note that $\lambda^{\alpha}\in\al$ and
$f_{\alpha}(\lambda^{\alpha})=\lambda^{\alpha}$. Moreover,
$\cyc{\beta,\lambda^{\alpha}}=m_{\beta}+\frac{1}{n_{\beta}|\cyc{
z_{\alpha}}\cdot \beta|}\neq m_{\beta}$, which implies that
$\lambda^{\alpha}\notin
H_{\beta}$. 
\label{rk:hyperplan}
\end{remark}

\begin{lemma}
Let $\lambda\in\omega(\al_q)$ for some $\omega\in E_q$. We suppose that
$\lambda\notin\widetilde{\Delta}_{\min}$.
Then, $\lambda$ lies in an
$F$-stable $\overline{W}_a$-orbit if and only if there is
$\alpha\in\widetilde{\Delta}_{\min}$ with $z_{\alpha}\in A_{\Galg}$ 
such that $F(\lambda)=F(\omega)f_{\alpha}(\lambda)$.
\label{elementFstable}
\end{lemma}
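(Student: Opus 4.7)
The plan is to establish both implications by combining the fundamental-domain property of $\al$ for $W_a$ with Theorem~\ref{classdisc}(2). The backward direction reduces to observing that $f_{\alpha}$ lies in $\overline{W}_a$ precisely when $z_{\alpha}\in\cal A_{\Galg}$; the forward direction requires identifying a canonical representative of $\mu:=F(\omega)^{-1}F(\lambda)$ inside $\al$.

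For ($\Leftarrow$), the condition $z_{\alpha}\in\cal A_{\Galg}$ is equivalent to $\om_{\alpha}^{\vee}\in Y(\Talg)$ via the isomorphism $\om^{\vee}$ of Proposition~\ref{DescInvariantAlcove}, so $f_{\alpha}=z_{\alpha}+\om_{\alpha}^{\vee}$ lies in $\overline{W}_a=Y(\Talg)\rtimes W$. Since $\omega\in E_q\subseteq W_{a,q}$, its image $F(\omega)$ lies in $W_a\subseteq\overline{W}_a$. Hence $F(\omega)\circ f_{\alpha}\in\overline{W}_a$ sends $\lambda$ to $F(\lambda)$, witnessing that the $\overline{W}_a$-orbit of $\lambda$ is $F$-stable.

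For ($\Rightarrow$), using the identity $F(g)(F(v))=F(g(v))$ from Equation~(\ref{eq:permutation}), I rewrite $\mu=F(\omega^{-1}(\lambda))$; since $\omega^{-1}(\lambda)\in\al_q=F^{-1}(\al)$, this shows $\mu\in\al$. Assume the orbit is $F$-stable and pick $g\in\overline{W}_a$ with $F(\lambda)=g(\lambda)$; then $\mu=F(\omega)^{-1}g(\lambda)\in\overline{W}_a\cdot\lambda$, because $F(\omega)^{-1}\in W_a\subseteq\overline{W}_a$. The key step is to identify $\mu$ with some $f_{\alpha}(\lambda)$, $\alpha\in\widetilde{\Delta}_{\min,\Galg}$. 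For this, I use that the isomorphism $\om^{\vee}$ gives $\overline{W}_a/W_a\cong Y(\Talg)/Y(\Talg_{\Sc})\cong\cal A_{\Galg}$, and together with $\Gamma_{\Galg}=\{f_{\alpha}\,|\,\alpha\in\widetilde{\Delta}_{\min,\Galg}\}\subseteq\overline{W}_a$ this yields a decomposition $\overline{W}_a=W_a\cdot\Gamma_{\Galg}$. Writing the element sending $\lambda$ to $\mu$ as $w\cdot f_{\alpha}$ with $w\in W_a$ and $\alpha\in\widetilde{\Delta}_{\min,\Galg}$ gives $\mu=w(f_{\alpha}(\lambda))$, with both $\mu,\,f_{\alpha}(\lambda)\in\al$ (the latter since $f_{\alpha}\in\Gamma_{\al}$ stabilizes $\al$). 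The fundamental-domain property of $\al$ for $W_a$ then forces $\mu=f_{\alpha}(\lambda)$, and applying $F(\omega)$ gives $F(\lambda)=F(\omega)f_{\alpha}(\lambda)$.

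The main obstacle is the last equality $\mu=f_{\alpha}(\lambda)$: when $\lambda$ lies on a face of $\al$ the $W_a$-stabilizer can act nontrivially, and distinct choices of $\alpha$ may yield the same point $f_{\alpha}(\lambda)$, so the uniqueness argument from the fundamental-domain property is delicate. This is where the hypothesis $\lambda\notin\widetilde{\Delta}_{\min}$ intervenes: it excludes the minuscule vertices of $\al$, which are precisely the points where the $\Gamma_{\Galg}$-orbit would collapse under the $W_a$-action and where the choice of $\alpha$ would fail to be well-defined. Under this hypothesis the identification is clean, and the two implications combine to give the claimed equivalence.
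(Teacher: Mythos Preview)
Your argument is correct, and in fact cleaner than the paper's for the forward direction. The paper proceeds via Theorem~\ref{classdisc}(2): it brings $F(\lambda)$ back into $\al$ by an element $v\in W_a$, obtains $z_{\alpha}\in\cal A_{\Galg}$ with $vF(\lambda)-f_{\alpha}(\lambda)\in Y(\Talg)$, and then runs an explicit affine-coordinate computation to show this difference vanishes. That computation is where the paper genuinely uses the hypothesis $\lambda\notin\widetilde{\Delta}_{\min}$: two distinct points of $\al$ can differ by a nonzero element of $Y(\Talg)$ only if both are minuscule vertices $\om_{\beta}^{\vee}$, and the hypothesis rules this out. Your route via the decomposition $\overline{W}_a=W_a\cdot\Gamma_{\Galg}$ followed by the $W_a$-fundamental-domain property bypasses this computation entirely and yields $\mu=f_{\alpha}(\lambda)$ directly.

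Your final paragraph, however, is off. The fundamental-domain property for $W_a$ on the closed alcove $\al$ is unconditional: each $W_a$-orbit meets $\al$ in exactly one point, regardless of whether that point lies on a face or in the interior (Bourbaki~VI,\,\S2). A nontrivial $W_a$-stabilizer only means the witness $w$ is not unique; it does not threaten the equality $\mu=f_{\alpha}(\lambda)$. Nor is uniqueness of $\alpha$ at stake, since you only need existence. Consequently your argument never uses the hypothesis $\lambda\notin\widetilde{\Delta}_{\min}$, and you have actually proved the lemma without it. You should drop the last paragraph and simply note that the hypothesis is not needed with this approach.
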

\begin{proof}
Since $F(\omega(\al_q))$ is an alcove for $W_a$, there is a unique $v\in
W_a$ such that $vF(\omega(\al_q))=\al$. In particular, $vF(\lambda)\in
\al$.
If $F(\lambda)$ and $\lambda$ lie in the same
$\overline{W}_a$-orbit, then $vF(\lambda)$ and $\lambda$ too.
Then, by Theorem~\ref{classdisc}(2), there is
$z_{\alpha}\in\cal A_{\Galg}$ (for $\alpha\in\widetilde{\Delta}_{\min}$)
such that $vF(\lambda)-z_{\alpha}(\lambda)\in Y(\Talg)$. Since
$\om_{\alpha}^{\vee}\in Y(\Talg)$, we deduce that
$vF(\lambda)-f_{\alpha}(\lambda)\in Y(\Talg)$. But $vF(\lambda)$ and
$f_{\alpha}(\lambda)$ lie in $\al$. We denote by
$(\mu_{\beta})_{\beta\in\widetilde{\Delta}}$ and
$(\lambda'_{\beta})_{\beta\in\widetilde{\Delta}}$ the affine coordinate
of $vF(\lambda)$ and $f_{\alpha}(\lambda)$. 
We have
$$vF(\lambda)-f_{\alpha}(\lambda)=\sum_{\beta\in\Delta}\frac{\mu_{\beta}-
\lambda'_{\beta}}{n_{\beta}}\,\om_{\beta}^{\vee}.$$
We have $0\leq \mu_{\beta}\leq 1$ and $0\leq \lambda'_{\beta}\leq 1$
for all $\beta\in\widetilde{\Delta}$ (because $vF(\lambda)$ and
$f_{\alpha}(\lambda)$ lie in $\al$).
Thus, for every $\beta\in\Delta$, we have
$$-\frac{1}{n_{\beta}}\leq
\frac{\mu_{\beta}-\lambda'_{\beta}}{n_{\beta}}\leq
\frac{1}{n_{\beta}}$$
Since $Y(\Talg)\leq Y(\Talg_{\ad})$, we deduce from Equation~(\ref{eq:yad}) that
$\mu_{\beta}=\lambda'_{\beta}$ if
$\beta\notin\widetilde{\Delta}_{\min}$.
Suppose that $\beta\in\widetilde{\Delta}_{\min}$. Then $n_{\beta}=1$ and
$\mu_{\beta}-\lambda'_{\beta}\in\{-1,0,1\}$.
If $\mu_{\beta}-\lambda_{\beta}'\neq 0$, then we can assume that
$\mu_{\beta}=1$ and $\lambda'_{\beta}= 0$. 
Then $\mu_{\beta'}=0$ for all $\beta'\neq \beta$, and
$vF(\lambda)=\om_{\beta}^{\vee}$. Similarly, we deduce that 
$f_{\alpha}(\lambda)=\om_{\beta'}^{\vee}$ for some
$\beta'\in\widetilde{\Delta}_{\min}$ (because $f_{\alpha}(\lambda)\neq
vF(\lambda)$, which implies that there is
$\beta'\in\widetilde{\Delta}_{\min}$ with
$\lambda'_{\beta'}=1$). It follows that
$\lambda=\om_{z_{\alpha}(\beta')}^{\vee}\in\widetilde{\Delta}_{\min}$.

So, if we assume that
$\lambda\notin\widetilde{\Delta}_{\min}$, we deduce that 
 $vF(\lambda)=f_{\alpha}(\lambda)$.
Moreover, $vF(\omega(\al_q))=\al$ implies that
$F(\omega)(\al)=v^{-1}(\al)$.
Since $F(\omega)\in W_a$, 
it follows from~\cite[VI.\S2.1]{Bourbaki456} that
$F(\omega)=v^{-1}$.
Conversely, if $F(\lambda)=F(\omega)f_{\alpha}(\lambda)$, 
then $F(\omega^{-1})F(\lambda)$ and
$f_{\alpha}(\lambda)$ are in $\al$. So, we have
$$F(\omega^{-1})F(\lambda)-z_\alpha(\lambda)=-\om_{\alpha}^\vee\in
Y(\Talg),$$ and the result comes from Theorem~\ref{classdisc}(2).
\end{proof}

\begin{lemma}
Suppose that $\cal A$ is cyclic.
For $\alpha\in\widetilde{\Delta}_{\min}$, $\alpha\neq -\alpha_0$ 
and $\omega\in M_{\alpha,q}$,
if $\lambda\in\omega(\al_q)$ lies in an $F$-stable
$\overline{W}_a$-orbit, then $\lambda\in V_{\alpha}$.
\label{pointfixeinvariant}
\end{lemma}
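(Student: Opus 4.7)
Set $\mu=\omega^{-1}(\lambda)\in\al_q$. The identity $\omega^{-1}f_\alpha\omega=f_{\widetilde{\alpha},q}$ from Equation~(\ref{eq:com}) (with $\widetilde{\alpha}=m(\alpha)$, see Equation~(\ref{eq:alphatilde})) shows that the conclusion $\lambda\in V_\alpha$ is equivalent to $f_{\widetilde{\alpha},q}(\mu)=\mu$. Moreover, since $\cal A$ is cyclic, the argument at the end of the proof of Theorem~\ref{alovefixe} gives $\cyc{z_\alpha}=\cyc{z_{\widetilde{\alpha}}}$, whence $V_\alpha=V_{\widetilde{\alpha}}$ by Lemma~\ref{DimInvariant}, and the group $\Gamma_\al$ is itself cyclic, hence abelian. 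My plan is to convert the $F$-stability assumption into an explicit equation on $\mu$ via Lemma~\ref{elementFstable}, then to extract the required fixed-point identity using this abelian structure together with Lemma~\ref{diag1}.

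First, apply Lemma~\ref{elementFstable} to $\lambda$: after treating the degenerate case $\lambda\in\{\om_\beta^\vee\,|\,\beta\in\widetilde{\Delta}_{\min}\}$ separately by direct inspection of the affine coordinates in Theorem~\ref{classdisc}(1), we obtain $\alpha'\in\widetilde{\Delta}_{\min}$ with $z_{\alpha'}\in\cal A_\Galg$ and $F(\lambda)=F(\omega)f_{\alpha'}(\lambda)$. Applying $F^{-1}$ and using Equation~(\ref{eq:permutation}) to move $F$ past $F(\omega)$, together with Lemma~\ref{diag1} to rewrite $F^{-1}f_{\alpha'}=f_{\alpha',q}F^{-1}$, this becomes $\mu=f_{\alpha',q}(F^{-1}(\lambda))$, equivalently $F(\mu)=f_{\alpha'}(\lambda)=f_{\alpha'}\omega(\mu)$. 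Thus $\mu$ is a fixed point of the affine transformation $(f_{\alpha'}\omega)^{-1}F$.

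To finish, I would apply $f_{\widetilde{\alpha}}$ to the identity $F(\mu)=f_{\alpha'}(\lambda)$: the left-hand side becomes $F(f_{\widetilde{\alpha},q}(\mu))$ by Lemma~\ref{diag1}, and the right-hand side becomes $f_{\alpha'}f_{\widetilde{\alpha}}(\lambda)$ by the commutativity of $\Gamma_\al$ (cyclicity of $\cal A$), so that $F(f_{\widetilde{\alpha},q}(\mu))=f_{\alpha'}f_{\widetilde{\alpha}}(\lambda)$. Comparing with the original identity $F(\mu)=f_{\alpha'}(\lambda)$, both $\mu$ and $f_{\widetilde{\alpha},q}(\mu)$ are sent by the same affine map $F$ into points obtained by applying $f_{\alpha'}$ to points of the $f_{\widetilde{\alpha}}$-orbit of $\lambda$; the expected obstacle is that a purely formal manipulation here only reproduces a tautology. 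The rigidity needed to break the circularity will come from the counting result of Theorem~\ref{alovefixe}: the number $|M_{\alpha,q}|=q^{\dim V_\alpha}$ of $f_\alpha$-stable alcoves exactly matches the cardinality of the image of $V_{\widetilde{\alpha},q}\cap\al_q$ under $\omega$, together with the rationality/Frobenius-periodicity constraints on $F$-stable orbit representatives; hence any $\lambda\in\omega(\al_q)$ in an $F$-stable $\overline{W}_a$-orbit must lie in the $\omega$-image of $V_{\widetilde{\alpha},q}$, that is, in $V_\alpha$. The main technical point is carrying out this matching, which is exactly where the cyclicity of $\cal A$ is indispensable (it forces $V_\alpha=V_{\widetilde{\alpha}}$ and makes $z_{\alpha'}$ a power of $z_\alpha$).
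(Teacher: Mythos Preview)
Your argument has a genuine gap at the point where you yourself flag it: the formal manipulation after applying $f_{\widetilde{\alpha}}$ does indeed only reproduce a tautology, and the ``counting'' rescue you propose does not work. The quantity $|M_{\alpha,q}|=q^{\dim V_\alpha}$ counts $f_\alpha$-stable small alcoves, not points; the set $V_{\widetilde{\alpha},q}\cap\al_q$ is a simplex of dimension $\dim V_\alpha$, not a finite set, so the phrase ``exactly matches the cardinality of the image of $V_{\widetilde{\alpha},q}\cap\al_q$'' is not meaningful. Nothing in Theorem~\ref{alovefixe} bounds the number of $F$-stable $\overline{W}_a$-orbit representatives inside a single $\omega(\al_q)$, so no pigeonhole is available.

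The missing idea is metric, not combinatorial. From Lemma~\ref{elementFstable} (after disposing of the case $\lambda=\om_\beta^\vee$, which the paper shows cannot occur in $\omega(\al_q)$ when $\alpha\neq -\alpha_0$) one gets $\lambda=\omega F^{-1}f_\beta(\lambda)$ for some $\beta\in\widetilde{\Delta}_{\min}$. The map $g=\omega F^{-1}f_\beta:\al\to\omega(\al_q)$ is a \emph{contraction} of ratio $1/q$ for the Euclidean metric $d_0$ (since $F^{-1}=F_0^{-1}h_q$ with $F_0$ an isometry and $h_q$ a homothety of ratio $1/q$, while $\omega$ and $f_\beta$ are isometries). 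By the Banach fixed-point theorem $\lambda$ is its \emph{unique} fixed point. Now use the commutation you already derived: from Equation~(\ref{eq:com}) and Lemma~\ref{diag1} one checks $g\circ f_{\widetilde{\alpha}}=f_\alpha\circ g$, whence $g(\al^{f_{\widetilde{\alpha}}})\subseteq\omega(\al_q)^{f_\alpha}$. Cyclicity of $\cal A$ gives $\al^{f_{\widetilde{\alpha}}}=\al^{f_\alpha}$ (via Lemma~\ref{DimInvariant}), so $g$ maps the closed nonempty set $\al^{f_\alpha}$ into itself. Iterating $g$ from any seed in $\al^{f_\alpha}$ converges to $\lambda$, forcing $\lambda\in\al^{f_\alpha}\subseteq V_\alpha$. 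This contraction/invariant-subset step is precisely what replaces your circular computation; without it the proof does not close.
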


\begin{proof}
Suppose there is $\om_{\beta}^{\vee}\in\omega(\al_q)$ with
$\beta\in\widetilde{\Delta}_{\min}$. Then
$f_{\alpha}(\om_{\beta}^{\vee})=\om_{\beta}^{\vee}$. Moreover,
$\om_{\beta}^{\vee}=f_{\beta}(0)$ and
$f_{\alpha}(0)=\om_{\alpha}^{\vee}$. It follows that
$$f_{\beta}(\om_{\alpha}^{\vee})=f_{\beta}f_{\alpha}(0)=
f_{\alpha}f_{\beta}(0)=f_{\alpha}(\om_{\beta}^{\vee})=\om_{\beta}^{\vee}=f_{\beta}(0),$$
which implies that $\om_{\alpha}^{\vee}=0$, i.e. $\alpha=-\alpha_0$.

So, this proves that $\lambda\notin\widetilde{\Delta}_{\min}$ and
by Lemma~\ref{elementFstable}, there is
$\beta\in\widetilde{\Delta}_{\min}$
such that $\omega F^{-1}f_{\beta}(\lambda)=\lambda$.
Then $\lambda$ is a fixed-point of the map 
$\omega F^{-1}f_{\beta}:\al\rightarrow
\omega(\al_q)$, which is a contraction map with rapport $1/q$ with
respect to the metric $d_0$. By the contraction mapping theorem, $\lambda$
is the unique fixed-point of this map. We write
$\widetilde{\alpha}=m(\alpha)$ where $m$ is the map defined in 
Equation~(\ref{eq:alphatilde}).
Then, using Lemma~\ref{diag1} and Equation~(\ref{eq:com}),
we deduce that
\begin{eqnarray*}
\omega F^{-1}f_{\beta}f_{\widetilde{\alpha}}&=&
\omega F^{-1}f_{\widetilde{\alpha}} f_{\beta}\\
&=&\omega f_{\widetilde{\alpha},q} F^{-1}f_{\beta}\\
&=&f_{\alpha}\omega F^{-1} f_{\beta}.
\end{eqnarray*}
In particular, we have $\omega
F^{-1}f_{\beta}(\al^{f_{\widetilde{\alpha}}})=\omega(\al_q)^{f_{\alpha}}$.
Moreover, in the proof of Theorem~\ref{alovefixe} we have seen that
$\cal A$ cyclic implies that
$\al^{f_{\widetilde{\alpha}}}=\al^{f_\alpha}$. Hence, it follows that
$$\omega F^{-1}f_{\beta}(\al^{f_{\alpha}})\subseteq
\al^{f_{\alpha}}.$$
So, $\lambda$ is the limit of the sequence defined by $x_0\in
\al^{f_{\alpha}}$ and $x_k=\omega F^{-1}f_{\beta}(x_{k-1})$ for $k\geq
1$. Since $\al^{f_{\alpha}}$ is closed, we deduce that
 $\lambda\in\al^{f_{\alpha}}$, 
as required.
\end{proof}

The proof of Lemma~\ref{pointfixeinvariant} shows that the map $\omega F^{-1}f_{\alpha}:\al\rightarrow\omega(\al_q)$
for $\omega\in E_q$ and $\alpha\in\widetilde{\Delta}_{\min}$ 
 has a
unique fixed-point, denoted by $\lambda_{\omega,\alpha}$ in the
following. Moreover, we define
\begin{equation}
\label{eq:pointfixe}
S_{q,\alpha}=\{\lambda_{\omega,\alpha}\,|\,\omega\in E_{q}\}.
\end{equation}
To simplify, we will write $S_{q,-\alpha_0}=S_q$ and
$\lambda_{\omega,-\alpha_0}=\lambda_{\omega}$.

\begin{lemma}
Let $\alpha\in\widetilde{\Delta}_{\min}$.  If
$m:\widetilde{\Delta}_{\min}\rightarrow\widetilde{\Delta}_{\min}$
denotes the map defined in Equation~(\ref{eq:alphatilde}), then
$\om_{\alpha}^{\vee}\in S_q$ if and only if $m(\alpha)=\alpha$.
Moreover, if $\om_{\alpha}^{\vee}\in S_q$, then 
for $\omega\in E_q$ and $\beta\in
\widetilde{\Delta}_{\min}$ we have
$$f_{\alpha}(\lambda_{\omega,\beta})=\lambda_{\omega',\beta},$$
where $\omega'=f_{\alpha}\omega f_{\alpha}^{-1}r_{\alpha}$ and
$r_{\alpha}$ is defined in Equation~(\ref{eq:com}).
\label{lemme:img0}
\end{lemma}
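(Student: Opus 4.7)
The plan is to dispose first of the trivial case $\alpha=-\alpha_0$ — where $f_{-\alpha_0}=1$ forces $r_{-\alpha_0}=1$ and $m(-\alpha_0)=-\alpha_0$, and where $\om_{-\alpha_0}^\vee=0=\lambda_1\in S_q$ — and then to handle the two assertions of the lemma in order, using the first one to derive the second.

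For the biconditional characterizing $\om_\alpha^\vee\in S_q$, the starting point is the defining identity $r_\alpha^{-1}f_\alpha=f_{m(\alpha),q}$ evaluated at $0\in\al_q$, which reads
$$r_\alpha\bigl(\tfrac{1}{q}\om_{\rho(m(\alpha))}^\vee\bigr)=\om_\alpha^\vee.$$
If $m(\alpha)=\alpha$, this becomes $r_\alpha F^{-1}(\om_\alpha^\vee)=\om_\alpha^\vee$; since $r_\alpha(\al_q)=f_\alpha(\al_q)\subseteq f_\alpha(\al)=\al$, we have $r_\alpha\in E_q$, and uniqueness of the fixed point of the contraction $r_\alpha F^{-1}$ yields $\om_\alpha^\vee=\lambda_{r_\alpha}\in S_q$. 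Conversely, if $\om_\alpha^\vee=\lambda_{\omega_0}$ for some $\omega_0\in E_q$, then
$$\omega_0\bigl(\tfrac{1}{q}\om_{\rho(\alpha)}^\vee\bigr)=\om_\alpha^\vee=r_\alpha\bigl(\tfrac{1}{q}\om_{\rho(m(\alpha))}^\vee\bigr),$$
so $r_\alpha^{-1}\omega_0\in W_{a,q}$ carries the vertex of $\al_q$ of type $\rho(\alpha)$ onto the vertex of type $\rho(m(\alpha))$. Invoking the standard fact that the (unextended) affine Weyl group $W_{a,q}$ preserves the type function on vertices of $\cal H_q$, this forces $\rho(\alpha)=\rho(m(\alpha))$, hence $m(\alpha)=\alpha$ as $\rho$ is a bijection of $\widetilde{\Delta}_{\min}$.

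For the formula in the second part, once $m(\alpha)=\alpha$ is in hand, one has $r_\alpha=f_\alpha f_{\alpha,q}^{-1}$, so $\omega'=f_\alpha\omega f_{\alpha,q}^{-1}$; in particular $\omega'\in E_q$ since $\omega'(\al_q)=f_\alpha(\omega(\al_q))\subseteq f_\alpha(\al)=\al$. Lemma~\ref{diag1} then gives $f_{\alpha,q}^{-1}F^{-1}=F^{-1}f_\alpha^{-1}$, and since $\Gamma_\al\cong\cal A$ is abelian one has $f_\alpha^{-1}f_\beta f_\alpha=f_\beta$. Combining these,
$$\omega'F^{-1}f_\beta\bigl(f_\alpha(\lambda_{\omega,\beta})\bigr)=f_\alpha\omega F^{-1}f_\alpha^{-1}f_\beta f_\alpha(\lambda_{\omega,\beta})=f_\alpha\omega F^{-1}f_\beta(\lambda_{\omega,\beta})=f_\alpha(\lambda_{\omega,\beta}),$$
and uniqueness of the fixed point of the contraction $\omega'F^{-1}f_\beta:\al\to\omega'(\al_q)$ (as in the proof of Lemma~\ref{pointfixeinvariant}) gives the desired equality $f_\alpha(\lambda_{\omega,\beta})=\lambda_{\omega',\beta}$.

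The main obstacle I expect is the type-preservation step in the converse direction of Part~1. Concretely one must fix a coherent type function on the vertices of $\cal H_q$ — declaring $\tfrac{1}{q}\om_\beta^\vee$ to have type $\beta$ for $\beta\in\widetilde{\Delta}$ — and verify that each of the generating affine reflections $s_\alpha$ ($\alpha\in\Delta$) and $s_{\alpha_0,1/q}$ of $W_{a,q}$ preserves this assignment. This is folklore for affine Coxeter groups, but it must be spelled out carefully to rule out the possibility that the translation part of $\omega_0$ silently compensates for a discrepancy between $\rho(\alpha)$ and $\rho(m(\alpha))$.
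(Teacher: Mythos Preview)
Your argument is correct. The treatment of the second assertion and of the forward implication in the first assertion matches the paper's proof essentially line for line (your rewriting $\omega'=f_\alpha\omega f_{\alpha,q}^{-1}$ is just the paper's $\omega'=f_\alpha\omega f_\alpha^{-1}r_\alpha$ with $r_\alpha=f_\alpha f_{m(\alpha),q}^{-1}$ and $m(\alpha)=\alpha$ substituted in).

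The one genuine difference is the converse direction of the first assertion. The paper argues by computing $r_\alpha F^{-1}(\om_\alpha^\vee)$ directly via the chain $r_\alpha F^{-1}f_\alpha(0)=r_\alpha f_{m(\alpha),q}F^{-1}(0)=\om_{m(\alpha)}^\vee$, and then reads off the biconditional; this implicitly uses that $\om_\alpha^\vee$ lies in the unique small alcove $r_\alpha(\al_q)$, so that $\om_\alpha^\vee\in S_q$ forces $\om_\alpha^\vee=\lambda_{r_\alpha}$. Your route instead takes an arbitrary $\omega_0\in E_q$ with $\lambda_{\omega_0}=\om_\alpha^\vee$ and invokes type preservation of $W_{a,q}$ on vertices to compare $r_\alpha^{-1}\omega_0$ applied to two vertices of $\al_q$. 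This is a legitimate and standard fact about affine Coxeter groups (it follows immediately from simple transitivity of $W_{a,q}$ on alcoves), so the ``obstacle'' you flag is not really one. Your approach has the advantage of not needing the uniqueness of the small alcove containing $\om_\alpha^\vee$, and it sidesteps the somewhat delicate intermediate identity in the paper's chain; the paper's approach has the virtue of being purely computational once one accepts that $\omega_0=r_\alpha$.
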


\begin{proof}
We have
\begin{eqnarray*}
r_{\alpha}F^{-1}(\om_{\alpha}^{\vee})&=&r_{\alpha}F^{-1}f_{\alpha}(0)\\
&=&r_{\alpha}f_{m(\alpha),q}F^{-1}(0)\\
&=&f_{m(\alpha)}(0)\\
&=&\om_{m(\alpha)}^{\vee}.
\end{eqnarray*}
Hence, $m(\alpha)=\alpha$ if and only if $\om_{\alpha}^{\vee}\in S_q$.
For $\omega\in E_q$, we denote by $\omega'\in E_q$ the element such that
$f_{\alpha}(\omega(\al_q))=\omega(\al_q)$. We have
$$f_{\alpha}\omega
f_{\alpha}^{-1}r_{\alpha}(\al_q)=\omega'(\al_q).$$
Since $f_{\alpha}\omega f_{\alpha}^{-1}r_{\alpha}$ and $\omega'$ lie in
$W_{a,q}$, we deduce from~\cite[VI.\S2.1]{Bourbaki456} that 
$w'=f_{\alpha}\omega f_{\alpha}^{-1}r_{\alpha}$.
Now, if $m(\alpha)=\alpha$, for $\beta\in\widetilde{\Delta}_{\min}$, we
deduce that 
\begin{eqnarray*}
f_{\alpha}(\lambda_{\omega,\beta})&=&f_{\alpha}\omega F^{-1}
f_{\beta}(\lambda_{\omega,\beta})\\
&=&f_{\alpha}\omega f_{\alpha}^{-1}r_{\alpha}f_{m(\alpha),q} F^{-1}
f_{\beta}(\lambda_{\omega,\beta})\\
&=&w'F^{-1}f_{\beta}\left(f_{m(\alpha)}(\lambda_{\omega,\beta})\right)\\
&=&w'F^{-1}f_{\beta}\left(f_{\alpha}(\lambda_{\omega,\beta})\right).
\end{eqnarray*}
By unicity, we deduce that
$f_{\alpha}(\lambda_{\omega,\beta})=\lambda_{\omega',\beta}$.
\end{proof}
\begin{remark}
In~\cite[3.8]{carter2}, it is proved that $S_q\subset
\Q_{p'}\otimes_{\Z}
Y(\Talg_{\Sc})$. Since $Y(\Talg_{\Sc})\leq
Y(\Talg)$, the $\overline{W}_a$-orbits on $S_q$
parametrize some $F$-stable semisimple classes of $\Galg$.
\label{rk:classsimply}
\end{remark}

\begin{proposition}
\label{prop:premier}
Suppose that $\cal A_{\Galg}=\cyc{z_{\alpha}}$ (for some
$\alpha\in\widetilde{\Delta}_{\min,\Galg}$) has prime order and assume
that $\Galg$ is not of type $D_{2n}$. If
$V_{\alpha}$ is not contained in some hyperplane of $\cal H_q$, then
the number of $F$-stable semisimple classes of $\Galg$ with disconnected
centralizer is $q^{\dim(V_{\alpha})}$. Otherwise, there are no
$F$-stable classes with disconnected centralizer.
\end{proposition}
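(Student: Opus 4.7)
Proof proposal. The plan is to reduce the count to the cardinality $|M_{\alpha,q}|$ provided by Theorem~\ref{alovefixe}, by establishing a bijection between $M_{\alpha,q}$ and the $F$-stable semisimple classes of $\Galg$ with disconnected centralizer. First, I would translate the disconnected-centralizer condition into an assertion about $V_\alpha$: because $\cal A_\Galg=\cyc{z_\alpha}$ has prime order $p_0$, the component group $A_\Galg(\mu)$ attached by Theorem~\ref{classdisc}(3) to any representative $\mu\in\al$ is either trivial or the whole $\cal A_\Galg$, and the latter is equivalent via Theorem~\ref{classdisc}(1) to $f_\alpha(\mu)=\mu$, i.e.\ to $\mu\in V_\alpha\cap\al$. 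Thus classes with disconnected centralizer correspond bijectively to the $\overline{W}_a$-orbits meeting $V_\alpha\cap\al$.

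Next, I would parametrize the $F$-stable classes via contraction fixed points. For every $\omega\in E_q$ the map $\omega F^{-1}\colon\al\to\omega(\al_q)$ is a strict contraction with unique fixed point $\lambda_\omega$, and these points form $S_q$ (Remark~\ref{rk:classsimply}) and represent all $F$-stable $W_a$-orbits in $\al$. Passing from $W_a$-orbits to $\overline{W}_a$-orbits amounts to quotienting by the $\cal A_\Galg$-action on $\{\lambda_\omega\}$ given by the affine maps $f_\beta$ (for $z_\beta\in\cal A_\Galg$), whose effect is computed explicitly by Lemma~\ref{lemme:img0} and whose stabilizer at $\lambda_\omega$ equals $A_\Galg(\lambda_\omega)$. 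Primality of $p_0$ forces every stabilizer to be either trivial or all of $\cal A_\Galg$, so the target set is in bijection with the $\cal A_\Galg$-fixed subset of $E_q$. For $\omega\in M_{\alpha,q}$, rearranging Equation~(\ref{eq:com}) into $f_\alpha\omega f_\alpha^{-1}r_\alpha=\omega$ and invoking Lemma~\ref{lemme:img0} with $\beta=-\alpha_0$ yields $f_\alpha(\lambda_\omega)=\lambda_\omega$, so $M_{\alpha,q}$ embeds into this fixed subset. Conversely, if $f_\alpha(\lambda_\omega)=\lambda_\omega$, then $\lambda_\omega$ lies in both $\omega(\al_q)$ and $f_\alpha(\omega(\al_q))$, and the identification $f_\alpha(\lambda_\omega)=\lambda_{\omega'}$ from Lemma~\ref{lemme:img0} combined with the simple-transitivity of $W_{a,q}$ on the alcove tiling forces $\omega'=\omega$, hence $\omega\in M_{\alpha,q}$. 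Theorem~\ref{alovefixe} then supplies the count $q^{\dim V_\alpha}$ in the main case, and $0$ when $V_\alpha$ lies in some hyperplane of $\cal H_q$.

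The principal obstacle will be applying Lemma~\ref{lemme:img0} cleanly, whose hypothesis is $\om_\alpha^\vee\in S_q$ (equivalently $m(\alpha)=\alpha$), and the converse direction $\lambda_\omega\in V_\alpha\Rightarrow\omega\in M_{\alpha,q}$ when $\lambda_\omega$ happens to lie on a wall shared between two distinct alcoves. The cyclicity of $\cal A$ afforded by the non-$D_{2n}$ hypothesis, together with the prime order of $\cal A_\Galg$, allows one to replace $\alpha$ by an iterate $m^k(\alpha)$ with $V_{m^k(\alpha)}=V_\alpha$ (by Lemma~\ref{DimInvariant}); Remark~\ref{rk:hyperplan} further guarantees that any hyperplane of $\cal H_q$ containing $V_\alpha$ is not a wall of $\al$. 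These two structural inputs are what make the dichotomy between $M_{\alpha,q}$ and its complement in $E_q$ track the dichotomy between disconnected and connected centralizer.
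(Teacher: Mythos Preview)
Your overall strategy---set up a bijection between $M_{\alpha,q}$ and the $F$-stable semisimple classes with disconnected centralizer, then read off the cardinality from Theorem~\ref{alovefixe}---is exactly the paper's. The gap is in the execution of the forward implication $\omega\in M_{\alpha,q}\Rightarrow\lambda_\omega\in V_\alpha$, where you invoke Lemma~\ref{lemme:img0}. That lemma carries the standing hypothesis $\om_\alpha^\vee\in S_q$ (equivalently $m(\alpha)=\alpha$), and this hypothesis is \emph{not} available here: it genuinely depends on congruence conditions on $q$ (compare Propositions~\ref{prop:qmoinsun} and~\ref{prop:qplusun}, where extra hypotheses on $q$ are imposed precisely to force $m(\alpha)=\alpha$). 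Your suggested workaround, replacing $\alpha$ by some iterate $m^k(\alpha)$, does not help: you would still need the $m$-image of that new element to equal itself, and there is no general reason $m$ should have a fixed point on the relevant subset of $\widetilde\Delta_{\min}$.

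The paper bypasses this by using Lemma~\ref{pointfixeinvariant} instead. That lemma needs only that $\cal A$ is cyclic (the non-$D_{2n}$ hypothesis) and shows directly, via the contraction argument, that any point of $\omega(\al_q)$ with $F$-stable $\overline W_a$-orbit lies in $V_\alpha$ whenever $\omega\in M_{\alpha,q}$; in particular $\lambda_\omega\in V_\alpha$. For the converse direction and for the ``otherwise'' case the paper appeals to~\cite[3.8.2]{carter2}: every $\lambda_\omega$ lies in a \emph{unique} element of $\Omega_q$, so once you know $\lambda\in V_\alpha$ with $F$-stable orbit forces $\lambda=\lambda_\omega$ (this follows from Lemma~\ref{elementFstable} together with $V_\alpha\subseteq V_\beta$ for all $\beta\in\widetilde\Delta_{\min,\Galg}$), the $f_\alpha$-invariance of $\lambda_\omega$ immediately gives $f_\alpha(\omega(\al_q))=\omega(\al_q)$, and in the hyperplane case yields the desired contradiction via Remark~\ref{rk:hyperplan}. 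Your proposal gestures at these ingredients but routes them through Lemma~\ref{lemme:img0} rather than Lemma~\ref{pointfixeinvariant} and~\cite[3.8.2]{carter2}; swapping to the latter two closes the argument.
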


\begin{proof}
Since $\cal A_{\Galg}$ has prime order, the $\overline{W}_a$-orbit of
$\lambda\in\al$ corresponding to (see Equation~(\ref{eq:isoclass}))
a semisimple class of $\Galg$ with
disconnected centralizer lies in $V_{\alpha}$, by
Theorem~\ref{classdisc}(3).
First suppose that $V_{\alpha}$ is not contained in some hyperplane of
$\cal H_q$. Then, $M_{\alpha,q}$ is non-empty.
In the proof of Lemma~\ref{pointfixeinvariant}, we showed that if
$\lambda\in\omega(\al_{q})$ with
$\omega\in M_{\alpha,q}$ is such that its $\overline{W}_{a}$-orbit 
is $F$-stable, then
$\lambda=\lambda_{\omega,\beta}$ for some
$\beta\in\widetilde{\Delta}_{\min,\Galg}$. Moreover, by
Lemma~\ref{pointfixeinvariant}, we have $\lambda\in V_{\alpha}$.
Since $\cal A_{\Galg}$ is generated by $z_{\alpha}$, 
we have $V_{\alpha}\subseteq
V_{\beta}$. This implies that $f_{\beta}(\lambda)=\lambda$. Thus,
$\lambda=\lambda_{\omega,\beta}$ for all
$\beta\in\widetilde{\Delta}_{\min,\Galg}$ and $\lambda$ is the
unique element of $\omega(\al_q)$ lying in an $F$-stable $\overline W_a$-orbit.
So, in particular, $\lambda=\lambda_{\omega}$. Moreover,
Theorem~\ref{classdisc}(2) implies that
for $\omega'\in M_{\alpha,q}$ such that $\omega\neq\omega'$, the
elements $\lambda_{\omega}$ and $\lambda_{\omega'}$ are not
$\overline{W}_a$-conjugate. Then
there are $|M_{\alpha,q}|$ such elements. By
Remark~\ref{rk:classsimply}, these points lie in $\Q_{p'}\otimes_{\Z}
Y(\Talg)\cap\al$ and correspond to the $F$-stable semisimple classes of
$\Galg$ with disconnected centralizer. The claim then follows from
Theorem~\ref{alovefixe}. 

Suppose now that $V_{\alpha}$ is contained in some hyperplane of $\cal
H_q$, and assume there is $\lambda\in V_{\alpha}$ with $F$-stable
$\overline{W}_a$-orbit. Then $\lambda\notin\widetilde{\Delta}_{\min}$
(because $\alpha\neq -\alpha_0$ and no element of
$\widetilde{\Delta}_{\min}$ is fixed by $f_{\alpha}$). Hence,
Lemma~\ref{elementFstable} implies that $F(\lambda)=F(\omega)(\lambda)$
with $\omega\in E_q$ such that $\lambda\in\omega(\al_q)$, i.e.,
$\lambda=\omega F^{-1}(\lambda)$. Then, $\lambda\in S_q$. However,
by~\cite[3.8.2]{carter2}, $\lambda$ lies in a unique element of
$\Omega_q$, which contradicts the fact that $\lambda$ lies in a
hyperplane which is not a wall of $\al$ (see Remark~\ref{rk:hyperplan}). 
\end{proof}

\begin{lemma}
If $p$ does not divide $|\cal A_{\Galg}|$, then for every
$\alpha\in\widetilde{\Delta}_{\min,q}$, the invariant subspace $V_{\alpha}$ is 
contained in no hyperplane of $\cal H_q$.
\label{conditionsurq}
\end{lemma}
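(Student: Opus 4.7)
The plan is to argue by contradiction. Recall that the hyperplanes in $\cal H_q$ are of the form $H_{\beta,k/q}=\{v\in V\,|\,\cyc{\beta,v}=k/q\}$ with $\beta\in\Phi$ and $k\in\Z$; suppose $V_{\alpha}\subseteq H_{\beta,k/q}$ for some such $\beta,k$ and derive an arithmetic contradiction from the hypothesis. Since $z_{\alpha}\in\cal A_{\Galg}$, its order $n=|\cyc{z_{\alpha}}|$ divides $|\cal A_{\Galg}|$, and $p\nmid|\cal A_{\Galg}|$ then gives $\gcd(n,q)=1$.

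First, the direction space of the affine subspace $V_{\alpha}$ is the linear subspace $V^{z_{\alpha}}=\ker(1-z_{\alpha})$, so the assumed containment forces both $V^{z_{\alpha}}\subseteq\beta^{\perp}$ (equivalently $\sum_{i=0}^{n-1}z_{\alpha}^{i}\beta=0$ in $V^{*}$) and that $\cyc{\beta,v}$ takes a constant value $c$ on $V_{\alpha}$. To compute $c$, Proposition~\ref{DescInvariantAlcove} supplies a fixed point of $f_{\alpha}$ in $\al$, so $\om_{\alpha}^{\vee}$ lies in $(V^{z_{\alpha}})^{\perp}$ (with respect to any $W$-invariant inner product on $V$); on this complement $(1-z_{\alpha})$ is invertible with inverse $\frac{1}{n}\sum_{i=0}^{n-1}(n-i)z_{\alpha}^{i}$, giving the explicit fixed point $v_{0}=\frac{1}{n}\sum_{i=0}^{n-1}(n-i)z_{\alpha}^{i}\om_{\alpha}^{\vee}\in V_{\alpha}$. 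Writing $a_{i}=[z_{\alpha}^{-i}\beta]_{\alpha}\in\Z$ for the coefficient of the simple root $\alpha$ in the root $z_{\alpha}^{-i}\beta$ and using $\sum_{i}a_{i}=0$, a direct computation yields $c=-N/n$ with $N=\sum_{i=0}^{n-1}i\,a_{i}\in\Z$. Since $\gcd(n,q)=1$ and $c\in\frac{1}{n}\Z$, the equality $c=k/q$ forces $c\in\Z$, i.e.\ $n\mid N$.

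The main obstacle is to establish that in fact $n\nmid N$, which then contradicts the containment. Regrouping the sum by the $\cyc{z_{\alpha}}$-orbit of $\beta$ of size $m\mid n$, this becomes the assertion that $m\nmid\sum_{k=0}^{m-1}k\,a_{k}$ under the orbit-sum constraint $\sum_{k=0}^{m-1}a_{k}=0$. I would handle this by a short case-by-case check over the simple types with $|\cal A_{\Galg}|>1$ compatible with the hypothesis ($A_{n}$, $B_{n}$, $C_{n}$, $D_{n}$, $E_{6}$, and $E_{7}$ with the appropriate primes excluded), using the description of the action of $z_{\alpha}$ on $\widetilde{\Delta}$ from Table~\ref{tab:DimInvariant} together with Bourbaki's tables to enumerate the roots $\beta\in\Phi$ whose $\cyc{z_{\alpha}}$-orbit has zero sum, and then verifying directly on the coefficient sequence $(a_{0},\ldots,a_{m-1})$ that $m\nmid\sum k\,a_{k}$. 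The delicate type should be $D_{n}$, where orbits of several different sizes coexist and the analysis becomes more involved than the cyclic cases.
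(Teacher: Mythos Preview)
Your reduction is sound: the direction space of $V_{\alpha}$ is indeed $V^{z_{\alpha}}$, the formula $v_{0}=\frac{1}{n}\sum_{i}(n-i)z_{\alpha}^{i}\om_{\alpha}^{\vee}$ gives a genuine fixed point of $f_{\alpha}$, and the identity $c=-N/n$ together with $\gcd(n,q)=1$ correctly forces $n\mid N$. So the lemma does reduce to the root-by-root divisibility statement you isolate. However, the proof is incomplete at exactly the point you flag as ``the main obstacle'': you propose a case-by-case verification that $m\nmid\sum_{k}ka_{k}$ for every root $\beta$ with vanishing $\langle z_{\alpha}\rangle$-orbit sum, but you do not carry it out. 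In several types (already $A_{n}$ with a non-generating $z_{\alpha}$, and certainly $D_{n}$) there are many such $\beta$, so this is not a one-line check; without it the argument is a sketch rather than a proof.

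The paper takes a genuinely different route that largely avoids this enumeration. It observes (from the end of the proof of Proposition~\ref{prop:premier}) that it suffices to exhibit a single $f_{\alpha}$-fixed point lying in $S_{q}$, since points of $S_{q}$ lie in the interior of a unique small alcove and hence cannot sit on a non-wall hyperplane of $\cal H_{q}$. To produce such a point, the paper finds for types $B_{n}$, $C_{2n}$, $D_{2n}$, $E_{6}$, $E_{7}$ an explicit $f_{\alpha}$-fixed $\lambda\in\al\cap(\Q_{p'}\otimes Y(\Talg_{\Sc}))$ whose centralizer type $I_{\lambda}$ is \emph{unique} in $\al$; since $F$ preserves the centralizer type, the $W_{a}$-orbit of $\lambda$ is automatically $F$-stable, i.e.\ $\lambda\in S_{q}$. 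For $C_{2n+1}$ the paper exhibits the $W_{a}$-element sending $F(\lambda)$ back to $\lambda$ by hand. Only for types $A_{n}$ and $D_{2n+1}$---where no element with unique centralizer type exists---does the paper resort to explicit hyperplane equations, and even then it checks a single well-chosen fixed point against all hyperplanes rather than all roots against one fixed point.

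In short: your arithmetic reduction is correct and gives a uniform strategy, but the case analysis you defer is the entire content; the paper's ``unique centralizer type'' trick buys a much shorter verification for most types, trading your uniform-but-long enumeration for a clever structural observation plus two residual explicit computations.
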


\begin{proof}
As we remarked at the end of the proof of
Proposition~\ref{prop:premier}, if there is $\lambda\in S_q$ such that
$f_{\alpha}(\lambda)=\lambda$ for
$\alpha\in\widetilde{\Delta}_{\min,\Galg}$, then $V_{\alpha}$ is 
contained in no hyperplane of $\cal H_q$.  
Suppose there is $\lambda\in\al$ such that:
\begin{enumerate}
\item We have $\lambda\in\Q_{p'}\otimes_{\Z}Y(\Talg_{\Sc})\cap\al$.
\item The type 
$I_{\lambda}$ is not equal to the type of $I_{\mu}$ for every
$\mu\neq\lambda$.
\item We have $f_{\alpha}(\lambda)=\lambda$.
\end{enumerate}
Let $\lambda$ be such an element. Property~(1) implies that there is a
semisimple class $[s]_{\Galg_{\Sc}}$ in $\Galg_{\Sc}$ corresponding to
$\lambda$. Then $F(s)$ is a semisimple element whose centralizer is of
the same type as the one of $s$ (because $F$ is an isogeny). 
Let $\mu$ be the point of $\al$ which is $W_a$-conjugate to
$F(\lambda)$, i.e, $\mu$ corresponds to the class of $F(s)$ in the
identification given in Equation~(\ref{eq:isoclass}). By
Theorem~\ref{classdisc}(3), we have $I_{\mu}=I_{\lambda}$ and it
follows from Property~(2) that $\lambda=\mu$. Hence, the $W_a$-orbit of
$\lambda$ is $F$-stable. We conclude using Property~(3).

Suppose that $p$ is a prime that satisfies the condition in
Table~\ref{tab:centelement}.  For types $B_n$, $C_{2n}$, $D_{2n}$,
$E_6$ and $E_7$, the corresponding elements given in
Table~\ref{tab:centelement} satisfy Properties (1), (2), and (3). The
result follows in these cases.  Furthermore, for the type $C_{2n+1}$,
if we denotes by $\lambda_n$ the corresponding element
of Table~\ref{tab:centelement}, we have
$$F(\lambda_n)=(q-\epsilon_0)\cdot\lambda_n+\epsilon_0\cdot\lambda_n,$$
where $\epsilon_0\in\{-1,1\}$ is such that $q\equiv \epsilon_0\mod 4$. Put
$\delta=0$ if $\epsilon_0=1$ and $\delta=1$ otherwise, and define
$r_n=t_nw_0^{\delta}$, with $t_n$ the translation of vector
$(q-\epsilon_0)\cdot\lambda_n\in Y(\Talg_{\Sc})$ and $w_0$ is the
longest element of $W$. By~\cite[Ch. VI,\,\S4.8]{Bourbaki456}, $w_0$ acts on $V$
as $-1$,
which implies that
$w_0(\lambda_n)=-\lambda_n$. Thus, we have $r_n\in W_a$ and
$F(\lambda_n)=r_n(\lambda_n)$. It follows that $\lambda_n\in S_q$ and
we conclude as above.

Note that if $\Galg$ is of type $A_n$, there is no element
$\lambda\in\al$ which satisfies the above properties (1), (2), (3). So,
in order to prove the result for the type $A_n$, we will more precisely
describe the hyperplanes of $\cal H_q$. For this, 
we write $\Delta=\{\alpha_i\,|\,1\leq i\leq n\}$ as 
in~\cite[Planche I]{Bourbaki456} and 
recall
that the positive roots are the elements $\alpha_{ij}=\sum_{i\leq
r<j}\alpha_r$ for
$1\leq i<j\leq n+1$. Hence, the hyperplanes of $\cal H_q$ are
given by 
$$H_{ij,k}=\left\{\lambda\in
V\,|\,\cyc{\alpha_{ij},\lambda}=\frac{k}{q}\right\},$$
for $1\leq i<j\leq n+1$ and $k\in\Z$. In particular, an element
$\lambda=(\lambda_i)_{1\leq i\leq n}$ lies in $H_{ij,k}$ if and only if
\begin{equation}
\label{eq:equahyper}
\sum_{i\leq r<j}\lambda_r=\frac{k}{q}.
\end{equation}
Let $z_{\alpha}\in\cal A$. Write $d$ for the order of $z_{\alpha}$ and
define the element $\lambda_d\in\al$ such that
$\lambda_{d,z^i_{\alpha_1}}=1/d$ for all $i\geq 0$ and
$\lambda_{d,\beta}=0$ for $\beta\in\widetilde{\Delta}$ which is not in
$\cyc{z_{\alpha}}\cdot\alpha_1$.
In particular, thanks to Equation~(\ref{eq:equahyper}), 
$\lambda_d\in H_{ij,k}$ for some $1 \leq i<j\leq n+1$ 
and $1\leq k\leq (q-1)$ (we indeed can suppose that $H_{ij,k}$ is not a
wall of $\al$ by Remark~\ref{rk:hyperplan}) if and only if 
$qn_d=kd$, where $n_d=|\{\alpha_r\,|\,i\leq
r<j\}\cap\cyc{z_{\alpha}}\cdot\alpha_1|$. Since $q>k\geq 0$, we deduce
that $p$ divides $d$.
It follows that if $p$ does not divide $|\cal A_{\Galg}|$, then $V_{\alpha}$ is
not contained in some hyperplane, as required.

Finally, if $\Galg$ is of type $D_{2n+1}$, then we show using 
equations of hyperplanes derived from~\cite[Planche III]{Bourbaki456} and an
argument similar to type $A_n$, that
the $f_{\alpha}$-stable element
$\frac{1}{4}(\om^{\vee}_1+\om^{\vee}_{2n}+\om^{\vee}_{2n+1})$ lies in
no hyperplane of $\cal H_q$ which are not walls of $\al$.
\end{proof}
\begin{table}
$$
\renewcommand{\arraystretch}{1.6}
\begin{array}{cc|c|c}
\textrm{Type}&&\lambda&\textrm{Type of }I_{\lambda}\\
\hline
B_n&\mbox{\small $p\neq 2$}&\frac{1}{2}\cdot\om_{2}^{\vee}&B_{n-2}\times A_1\times A_1\\
\hline
C_{2n}&\mbox{\small $p\neq 2$}&\frac{1}{2}\cdot\om_{n}^{\vee}&C_n\times C_n\\   
\hline
C_{2n+1}&\mbox{\small $p\neq
2$}&\frac{1}{2}\cdot\om_{2n+1}^{\vee}&A_{2n}\\   
\hline
D_{2n}&\mbox{\small $p\neq 2$}&\frac{1}{2}\cdot\om_{n}^{\vee}&D_n\times D_n\\
\hline
E_6&\mbox{\small $p\neq 3$}&\frac{1}{3}\cdot\om_{4}^{\vee}&A_2\times A_2\times A_2\\
\hline
E_7&\mbox{\small $p\neq 2$}&\frac{1}{2}\cdot\om_2^{\vee}&A_7\\
\end{array}$$
\caption{Some invariant elements}
\label{tab:centelement}
\end{table}

\begin{corollary}
In table~\ref{tab:discocent}, we give the number $n(q)$ of $F$-stable
semisimple classes with disconnected centralizer for simple algebraic
groups such that $\cal A$ has prime order. If $s$ is a representative of
such a class, we write $A_{\Galg}(s)=\Cen_{\Galg}(s)/\Cen_{\Galg}(s)^\circ$
for the component group of $\Cen_{\Galg}(s)$.
\end{corollary}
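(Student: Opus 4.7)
The plan is to deduce the table entry by entry from Proposition~\ref{prop:premier}, Lemma~\ref{conditionsurq}, Table~\ref{tab:DimInvariant} and Theorem~\ref{classdisc}(3). Since $|\cal A|$ is assumed prime and $\cal A_{\Galg}\leq \cal A$, the subgroup $\cal A_{\Galg}$ is either trivial or all of $\cal A$. In the trivial case (which covers $\Galg=\Galg_{\Sc}$) Theorem~\ref{classdisc}(3) forces $A_{\Galg}(s)=1$ for every $F$-stable semisimple class, so $n(q)=0$. Otherwise $\cal A_{\Galg}=\cyc{z_{\alpha}}$ for some $\alpha\in\widetilde{\Delta}_{\min,\Galg}$, and because the corollary does not cover type $D_{2n}$ (there $|\cal A|=4$ is not prime), Proposition~\ref{prop:premier} directly applies: either $n(q)=q^{\dim V_{\alpha}}$, or else $V_{\alpha}$ lies in some hyperplane of $\cal H_q$ and $n(q)=0$.

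The dimensions needed to populate the table are read off Table~\ref{tab:DimInvariant}: $\dim V_{\alpha}=n$ for $A_n$ with $n+1$ prime, $n-1$ for $B_n$, $\lfloor n/2\rfloor$ for $C_n$, $2$ for $E_6$ and $4$ for $E_7$. For the component group column, Theorem~\ref{classdisc}(3) identifies $A_{\Galg}(s)$ with a subgroup of $\cal A_{\Galg}$; when the centralizer is disconnected this subgroup is nontrivial, so by primality of $|\cal A_{\Galg}|$ it must equal $\cal A_{\Galg}$, giving $A_{\Galg}(s)\simeq \Z/|\cal A|\Z$ in every row of the table. Whenever $p$ does not divide $|\cal A_{\Galg}|$, Lemma~\ref{conditionsurq} ensures that $V_{\alpha}$ meets no hyperplane of $\cal H_q$, so the generic formula $n(q)=q^{\dim V_{\alpha}}$ holds.

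The remaining technical point, which is the main obstacle, is the bad-characteristic case $p\mid |\cal A_{\Galg}|$, namely $p=2$ for $B_n$, $C_n$, $E_7$, $p=3$ for $E_6$ and $p\mid (n+1)$ for $A_n$. In each such situation one must decide whether $V_{\alpha}$ is contained in some hyperplane of $\cal H_q$. I would handle this either by exhibiting an explicit element of $V_{\alpha}\cap \al\cap (\Q_{p'}\otimes_{\Z} Y(\Talg_{\Sc}))$ lying off every hyperplane of $\cal H_q$ which is not a wall of $\al$ (this, by Remark~\ref{rk:hyperplan} together with the closing argument of the proof of Proposition~\ref{prop:premier}, precludes $V_{\alpha}$ from being contained in any hyperplane and recovers $n(q)=q^{\dim V_{\alpha}}$), or, when no such element exists, by arguing as in the $A_n$-analysis at the end of the proof of Lemma~\ref{conditionsurq} via the explicit equations of the hyperplanes to conclude $n(q)=0$. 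Collating the outcomes of this case-by-case check across the types produces the final column of Table~\ref{tab:discocent}.
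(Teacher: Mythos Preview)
Your overall strategy---apply Proposition~\ref{prop:premier}, read the dimensions from Table~\ref{tab:DimInvariant}, invoke Lemma~\ref{conditionsurq} for the hypothesis on $p$, and get the component group from Theorem~\ref{classdisc}(3)---is exactly the paper's approach. However, two points need correction.

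First, there is a concrete error in the $A_n$ entry. You claim $\dim V_{\alpha}=n$ for $A_n$ with $n+1$ prime, which would give $n(q)=q^n$; but Table~\ref{tab:discocent} says $n(q)=1$. In Table~\ref{tab:DimInvariant} the entry reads: for $z_{\alpha_1}^d$ with $d\mid n+1$ one has $\dim V_{\alpha}=d-1$. A generator of $\cal A$ corresponds to $d=1$ (the element $z_{\alpha_1}$ itself, which acts as a single $(n+1)$-cycle on $\widetilde{\Delta}$, giving one orbit and hence $\dim V_{\alpha}=|\Omega_{\alpha}|-1=0$). Thus $n(q)=q^0=1$, matching the table.

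Second, your ``remaining technical point'' about the primes $p\mid |\cal A_{\Galg}|$ is not part of what the corollary asserts. Table~\ref{tab:discocent} carries explicit hypotheses $p\neq n+1$, $p\neq 2$, $p\neq 3$ in its second column; these are precisely the conditions under which Lemma~\ref{conditionsurq} guarantees that $V_{\alpha}$ lies in no hyperplane of $\cal H_q$, so Proposition~\ref{prop:premier} gives $n(q)=q^{\dim V_{\alpha}}$ outright. There is nothing to check in the excluded characteristics: the table simply does not make a claim there. Once you drop that paragraph and fix the $A_n$ dimension, your proof coincides with the paper's two-line argument.
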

\begin{proof}
This is a direct consequence of Proposition~\ref{prop:premier} and
Table~\ref{tab:DimInvariant}. The condition on $p$ comes from
Lemma~\ref{conditionsurq}.
\end{proof}
\begin{remark}
Recall that the Lang-Steinberg theorem implies that
the number $|s(\Galg^F)|$ of semisimple classes  of the finite
fixed-point subgroup $\Galg^F$ 
is given by
\begin{equation}
|s(\Galg^F)|=\sum_{ \renewcommand{\arraystretch}{0.8}\begin{array}{c}
     \scriptstyle [s]_{\Galg}\in s(\Galg)^F\\
     \scriptstyle F(s)=s \\
     \end{array} }|A_{\Galg}(s)^F|,
\label{eq:nbclass}
\end{equation}
where the sum is over the set of $F$-stable semisimple classes of
$\Galg$ and the representative $s$ is chosen to be $F$-stable, which
is possible by the Lang-Steinberg theorem. Suppose that $\cal A_{\Galg}$
has prime order. Then every semisimple element $s$ with disconnected
centralizer has a component group $A_{\Galg}(s)$ isomorphic to $\cal
A_{\Galg}$ and so to a subgroup $H$ of $\operatorname{Z}(\Galg_{\Sc})$
of order $|\cal A_{\Galg}|$ (using the isomorphism of
Equation~(\ref{eq:isocentre})), such that the actions of $F$ on
$A_{\Galg}(s)$ and on $H$ are equivalent.  In particular, we deduce that
the actions of $F$ on the groups $A_{\Galg}(s)$ for all $s$ such that
$A_{\Galg}(s)$ is not trivial, are equivalent.  We denote by $c_1$
(resp. $c_2$) a set of representatives of the $F$-stable semisimple
classes of $\Galg$ with connected centralizer (resp. a disconnected
centralizer) and we suppose that the elements of $c_1$ and $c_2$ are
chosen to be $F$-stable. Then Equation~(\ref{eq:nbclass}) gives 
\begin{equation}
|s(\Galg^F)|=|c_1|+|H^F|\cdot|c_2|.
\label{eq:relationclass}
\end{equation}
In~\cite{Br8}, we computed $|s(\Galg^F)|$ for every simple algebraic
group $\Galg$ defined over $\F_q$; see~\cite[Table 1]{Br8}. Moreover, it
is well-known~\cite[3.7.6]{carter2} that
\begin{equation}
q^{|\Delta|}=|c_1|+|c_2|.
\label{eq:steinberg}
\end{equation}
In particular, if $|H^F|$ is not trivial (this condition is for example
related in~\cite[Table 1]{Br8}), we can deduce $n(q)=|c_2|$ from
Equations~(\ref{eq:relationclass}),~(\ref{eq:steinberg}) and~\cite[Table
1]{Br8}. We retrieve the results of Table~\ref{tab:discocent}. But, when
$H^F$ is trivial (for example, for $q\equiv 2\mod 3$ for $\Galg$ of type
$E_6$), we get in Equation~(\ref{eq:relationclass}) no new
information, and $n(q)$ cannot be computed using~\cite{Br8}. 
\label{rk:manque}
\end{remark}

\begin{remark}
In \cite{Br8}, the prime $p$ is supposed to be a good prime for $\Galg$,
i.e. $p$ does not divide any of the numbers $n_{\alpha}$ (for
$\alpha\in\Delta$) defined in Equation~(\ref{eq:heightelement}). Note
that Proposition~\ref{prop:premier} applies for any prime $p$ which not
divide $|\cal A_{\Galg}|$; see Lemma~\ref{conditionsurq}. In particular,
we deduce from Equation~(\ref{eq:nbclass}) and Table~\ref{tab:discocent}
that
$$|s\left(
{}^{\epsilon}E_{6,\ad}(2^f)\right)|=2^{6f}+2^{2f+1}\quad\textrm{and}\quad
|s\left(E_{7,\ad}(3^f)\right)|=3^{7f}+3^{4f},$$
where $\epsilon=1$ if $F$ is a split Frobenius map and $\epsilon=-1$
otherwise.
Thanks to~\cite[Proposition 5.9]{Br8}, the ordinary McKay Conjecture holds in
defining characteristic for these groups.
\label{rk:McKayE67}
\end{remark}

\begin{table}
$$
\renewcommand{\arraystretch}{1.6}
\begin{array}{ccc|c|c}
\textrm{Type}&&&n(q)&|A_{\Galg}(s)|\\
\hline
A_{n,\ad}&\mbox{\scriptsize $n+1$ prime}&\mbox{\small $p\neq
n+1$}&1&n+1\\
\hline
B_{n,\ad}&&\mbox{\small $p\neq 2$}&q^{n-1}&2\\
\hline
C_{n,\ad}&&\mbox{\small $p\neq 2$}&q^{\lfloor \frac{n}{2}\rfloor}&2\\\hline
E_{6,\ad}&&\mbox{\small $p\neq 3$}&q^2&3\\
\hline
E_{7,\ad}&&\mbox{\small $p\neq 2$}&q^4&2\\
\end{array}$$
\caption{Number of semisimple classes with disconnected centralizer}
\label{tab:discocent}
\end{table}

Now, we define
$$\Theta_{q}=\bigsqcup_{\alpha\in\
\widetilde{\Delta}_{\min,\Galg}}S_{q,\alpha}.$$

\begin{proposition}
Suppose that $F_0$ is trivial and that $q\equiv 1 \mod |A_{\Galg}|$.
Then the $\Gamma_{\Galg}$-orbits of $\Theta_{q}$ correspond to the
$F$-stable semisimple classes of $\Galg$ in the bijection given in
Equation~(\ref{eq:isoclass}).  Moreover, if $\cal A_{\Galg}$ is cyclic
and $\Galg$ is not of type $D_{2n}$, and if for
$\alpha\in\widetilde{\Delta}_{\min,\Galg}$, the invariant space
$V_{\alpha}$ is not contained in some hyperplane of $\cal H_q$, then we
have
$$|\Theta_{q,\alpha}|=q^{\dim(V_{\alpha})},$$
where  $\Theta_{q,\alpha}$ denotes the set of $\Gamma_{\Galg}$-orbits of
$\Theta_q$ whose representatives are contained in $V_{\alpha}$.
\label{prop:qmoinsun}
\end{proposition}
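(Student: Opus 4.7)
The plan is to leverage the infrastructure built up in Theorem~\ref{classdisc}, Theorem~\ref{alovefixe}, Lemma~\ref{elementFstable}, Lemma~\ref{pointfixeinvariant}, and Lemma~\ref{lemme:img0} to directly reduce the count to $|M_{\alpha,q}|$. First I would verify that $\Gamma_\Galg$ actually acts on $\Theta_q$, which by Lemma~\ref{lemme:img0} reduces to showing $m(\alpha)=\alpha$ for every $\alpha\in\widetilde{\Delta}_{\min,\Galg}$, where $m$ is the map defined in Equation~(\ref{eq:alphatilde}). Since $F_0=\mathrm{id}$, one has $f_{\alpha,q}=z_\alpha+\om_\alpha^\vee/q$. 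My candidate for $r_\alpha$ is the pure translation by $(q-1)\om_\alpha^\vee/q$: the hypothesis $q\equiv 1\pmod{|A_\Galg|}$ together with the fact that the order of $z_\alpha$ divides $|A_\Galg|$ gives $(q-1)\om_\alpha^\vee\in Y(\Talg_{\Sc})$, so $r_\alpha\in W_{a,q}$. A direct affine-map computation then verifies $r_\alpha(\al_q)=f_\alpha(\al_q)$ and $r_\alpha^{-1}f_\alpha=f_{\alpha,q}$, hence $m(\alpha)=\alpha$. By Lemma~\ref{lemme:img0} this yields the desired $\Gamma_\Galg$-action on each $S_{q,\beta}$ and in particular $\om_\alpha^\vee\in S_q$ for every such $\alpha$.

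For the first assertion, I would establish a two-sided correspondence. Each $\lambda_{\omega,\alpha}\in\Theta_q$ satisfies by construction $F(\lambda_{\omega,\alpha})=F(\omega)f_\alpha(\lambda_{\omega,\alpha})$, so Lemma~\ref{elementFstable} places it in an $F$-stable $\overline{W}_a$-orbit; since $\om_\gamma^\vee\in Y(\Talg)$ for $\gamma\in\widetilde{\Delta}_{\min,\Galg}$ we have $\Gamma_\Galg\subseteq\overline{W}_a$, so $\Gamma_\Galg$-equivalent elements represent the same class. Conversely, every $F$-stable class has a representative $\lambda\in\al$; if $\lambda\notin\widetilde{\Delta}_{\min}$ Lemma~\ref{elementFstable} rewrites it as $\lambda_{\omega,\alpha}$, while for $\lambda\in\widetilde{\Delta}_{\min}$ the computation of the previous paragraph already places $\lambda=\om_\beta^\vee$ into $S_q\subseteq\Theta_q$. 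The key compatibility---that $\overline{W}_a$-equivalent elements of $\al$ are $\Gamma_\Galg$-equivalent---follows from Theorem~\ref{classdisc}(2) combined with Proposition~\ref{DescInvariantAlcove}: the representatives of a given $\overline{W}_a$-orbit inside $\al$ are precisely the images $f_\gamma(\lambda_1)$ for $\gamma$ running through $\widetilde{\Delta}_{\min,\Galg}$, using that $\al$ is a strict fundamental domain for $W_a$ and that $Y(\Talg)/Y(\Talg_{\Sc})\simeq\cal{A}_\Galg$ is realized by the translations $\om_\gamma^\vee$.

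For the cardinality formula, Theorem~\ref{classdisc}(3) interprets ``representative in $V_\alpha$'' as ``$z_\alpha$ belongs to the component group of the class''. Lemma~\ref{pointfixeinvariant}---applicable because $\cal{A}_\Galg$ is cyclic and $\Galg$ is not of type $D_{2n}$---then shows that in each alcove $\omega(\al_q)$, an $F$-stable representative lying in $V_\alpha$ occurs precisely when $\omega\in M_{\alpha,q}$, and in that case the representative is $\lambda_\omega$. Theorem~\ref{alovefixe} provides $|M_{\alpha,q}|=q^{\dim V_\alpha}$ under the hyperplane hypothesis. To conclude $|\Theta_{q,\alpha}|=|M_{\alpha,q}|$, one proceeds as in the proof of Proposition~\ref{prop:premier}: distinct $\omega\in M_{\alpha,q}$ correspond to distinct alcove interiors (so give distinct elements $\lambda_\omega$), and Theorem~\ref{classdisc}(2) rules out $\overline{W}_a$-equivalence between them, so by the first part they contribute distinct $\Gamma_\Galg$-orbits.

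The main obstacle is precisely this last non-equivalence step in the presence of composite cyclic $\cal{A}_\Galg$. The abelian structure $\Gamma_\al\simeq\cal{A}$ (Proposition~\ref{DescInvariantAlcove}) gives $f_\alpha^k=f_\beta$ whenever $z_\beta=z_\alpha^k$, so $V_\alpha$ is fixed pointwise by every $f_\gamma$ with $z_\gamma\in\cyc{z_\alpha}$; when $z_\alpha$ generates $\cal{A}_\Galg$ this is enough to conclude that $\Gamma_\Galg$ acts trivially on $\{\lambda_\omega\mid\omega\in M_{\alpha,q}\}$. For non-generating $\alpha$ inside a composite cyclic $\cal{A}_\Galg$ (as can occur for $D_{2n+1}$ with $\cal{A}_\Galg=\Z/4$), one has to work harder, combining the uniqueness of the fixed point of each contraction $\omega F^{-1}f_\beta$ on $\omega(\al_q)$ with the geometric non-interaction of distinct alcove interiors---where the hyperplane hypothesis is used to ensure boundaries do not create spurious identifications.
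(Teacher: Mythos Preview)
Your construction of $r_\alpha$ as the translation by $(q-1)\om_\alpha^\vee/q$ and the verification that $m(\alpha)=\alpha$ for $\alpha\in\widetilde{\Delta}_{\min,\Galg}$ agree exactly with the paper. From there the two arguments diverge.

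For the first assertion the paper does \emph{not} set up a two-sided correspondence. Instead it proves a counting identity: choosing a transversal $S$ for $\Omega_q/\Gamma_\Galg$, one shows that for each $\omega(\al_q)\in S$ the set $\{\lambda_{\omega,\beta}:\beta\in\widetilde{\Delta}_{\min,\Galg}\}$ has exactly $|\Gamma_\Galg\cdot\omega(\al_q)|$ elements (two of them coincide precisely when the corresponding $f_\beta,f_{\beta'}$ differ by an element of the stabilizer of $\omega(\al_q)$). Summing gives $|\Theta_q/\Gamma_\Galg|=|E_q|=q^{|\Delta|}$, which equals the number of $F$-stable semisimple classes of $\Galg$; since the map from $\Theta_q/\Gamma_\Galg$ to these classes is already injective (Theorem~\ref{classdisc}(2)), it is bijective. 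This sidesteps the case analysis on vertices $\om_\beta^\vee$ with $\beta\notin\widetilde{\Delta}_{\min,\Galg}$, which your direct surjectivity argument does not cover (your ``previous paragraph'' only establishes $\om_\beta^\vee\in S_q$ for $\beta\in\widetilde{\Delta}_{\min,\Galg}$).

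For the cardinality formula your argument has a genuine gap, which you correctly locate but do not close. When $z_\alpha$ fails to generate $\cal A_\Galg$ (e.g.\ $\alpha=\alpha_1$ in type $D_{2n+1}$ with $\cal A_\Galg\simeq\Z/4$), two things go wrong: first, an alcove $\omega(\al_q)$ with $\omega\in M_{\alpha,q}$ can contain \emph{several} points of $\Theta_q\cap V_\alpha$ (the various $\lambda_{\omega,\beta}$ need not all collapse to $\lambda_\omega$, since $V_\alpha\not\subseteq V_\beta$ for $\beta$ a generator); second, $\Gamma_\Galg$ does not fix $V_\alpha$ pointwise, so distinct $\omega\in M_{\alpha,q}$ can give $\Gamma_\Galg$-equivalent points. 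Both effects are present and they compensate exactly. The paper handles this by rerunning the same orbit-stabilizer count restricted to $M_{\alpha,q}$: one observes that $M_{\alpha,q}$ is $\Gamma_\Galg$-stable (since $\Gamma_\al$ is abelian), that for $\omega\in M_{\alpha,q}$ every $\lambda_{\omega,\beta}$ lies in $V_\alpha$ (Lemma~\ref{pointfixeinvariant}), and then that
\[
|\Theta_{q,\alpha}|=\sum_{\omega(\al_q)\in S_\alpha}|\Gamma_\Galg\cdot\omega(\al_q)|=|M_{\alpha,q}|,
\]
where $S_\alpha\subseteq S$ is a transversal for $M_{\alpha,q}/\Gamma_\Galg$. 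No injectivity claim about the $\lambda_\omega$ is needed.
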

\begin{proof}
We have $|\cal A_{\Galg}|=|Y(\Talg)/Y(\Talg_{\Sc})|$. In particular,
$|\cal A_{\Galg}|$ is the product of the elementary divisors of
$Y(\Talg_{\Sc})\leq Y(\Talg)$ (viewed as $\Z$-modules). Hence, it
follows that $|\cal
A_{\Galg}|\cdot Y(\Talg)\leq Y(\Talg_{\Sc})$ and $|\cal A_{\Galg}|\cdot
F^{-1}(Y(\Talg))\leq F^{-1}(Y(\Talg_{\Sc})).$  
For $\alpha\in\widetilde{\Delta}_{\min}$, we have
$\om_{\alpha,q}^{\vee}=\frac{1}{q}\cdot \om_{\alpha}^{\vee}$ (because
$F_0$ is trivial). and
$\om_{\alpha}^{\vee}=(q-1)\om_{\alpha,q}^{\vee}+\om_{\alpha,q}^{\vee}$.
Denote by $r_{\alpha}$ the translation of vector
$(q-1)\om_{\alpha,q}^{\vee}$. 
Since $|\cal A_{\Galg}|$ divides $q-1$, the translation $r_{\alpha}$
lies in $F^{-1}(Y(\Talg))$, and  
we have $r_{\alpha}(\al_q)\subset\al$ and
$\om_{\alpha}^{\vee}=q\om_{\alpha,q}^{\vee}=r_{\alpha}(\om_{\alpha,q}^{\vee})$. 
Thus, $f_{\alpha}(\al_q))=r_{\alpha}(\al_q)$.
Furthermore, we have
$$r_{\alpha}F^{-1}(\om_{\alpha}^{\vee})=\om_{\alpha}^{\vee}.$$
This proves that $\om_{\alpha}^{\vee}\in S_{\alpha}$, and by
Lemma~\ref{lemme:img0}, we deduce that $m(\alpha)=\alpha$ and 
that $\Gamma_{\Galg}$ acts on $\Theta_q$.
Now, since $\om_{\alpha}^{\vee}\in \Theta_q$, Lemma~\ref{elementFstable}
implies that the elements of $\Theta_q$ are the elements of $\al$ whose
$\overline{W}_q$-orbit is $F$-stable. Moreover, by
Theorem~\ref{classdisc}(2), two elements of $\Theta_q$ are
$\overline{W}_a$-conjugate if and only if they lie in the same
$\Gamma_{\Galg}$-orbit.
As we have seen in the proof of Theorem~\ref{alovefixe},
$\Gamma_{\Galg}$ acts on the set $\Omega_q$. We denote by $S$ a
system of representatives of $\Omega_q/\Gamma_{\Galg}$.
Then we have
$$\Theta_q/\Gamma_{\Galg}=\bigsqcup_{\omega(\al_q)\in S}\,\{\Gamma_{\Galg}\cdot\lambda_{\omega,\beta}\,|\,\beta\in\widetilde{\Delta}_{\min,\Galg}\}.$$
Let $\omega(\al_q)\in S$. Then we have $|\{\Gamma_{\Galg}
\cdot\lambda_{\omega,\beta}\,|\,\beta\in\widetilde{\Delta}_{\min,\Galg}\}|=|S_{\omega,q}|$,
where
$S_{\omega,q}=\{\lambda_{\omega,\beta}\,|\,\beta\in\widetilde{\Delta}_{\min,
\Galg}\}$. Furthermore, the proof of Lemma~\ref{lemme:img0} implies that
$\lambda_{\omega,\beta}=\lambda_{\omega,\beta'}$ if
and only if there is $f\in \Gamma_{\Galg}$ with
$f(\omega(\al_q))=\omega(\al_q)$ and $\beta'$ is the element of
$\widetilde{\Delta}_{\min,\Galg}$ such that $f_{\beta'}=ff_{\alpha}$.
This proves that $$|S_{\omega,q}|=|\Gamma_{\Galg}|
/|\operatorname{Stab}_{\Gamma_{\Galg}}(\omega(\al_q))|=
|\Gamma_{\Galg}\cdot \omega(\al_q)|.$$
It follows that
$$|\Theta_q/\Gamma_{\Galg}|=\sum_{\omega(\al_q)\in S}|\Gamma_{\Galg}\cdot
\omega(\al_q)|=|E_q|=q^{|\Delta|}.$$
The last equality comes from~\cite[\S3]{DeriBrauer}.
However, the number of $F$-stable semisimple classes of $\Galg$ is
$q^{|\Delta|}$; see~\cite[3.6.7]{carter2}. 
Hence, there are at most $q^{|\Delta|}$ orbits under
$\overline{W}_a$ in $\Q_{p'}\otimes_{\Z} Y(\Talg)$ which are $F$-stable.
It follows that the elements of $\Theta_q/\Gamma_{\Galg}$ have to
contain points in $\Q_{p'}\otimes_{\Z} Y(\Talg)\cap \al$, and correspond
to the $F$-stable semisimple classes of $\Galg$.

Now, we remark that $\Gamma_{\Galg}$ stabilizes $\Theta_q\cap
V_{\alpha}$. Hence, we have $$\Theta_{q,\alpha}=\Theta_q\cap
V_{\alpha}/\Gamma_{\Galg}.$$
We set $S_{\alpha}=\{\omega(\al_q)\in S\,|\,\omega\in M_{\alpha,q}\}$.
Note that the elements of $S_{\alpha}$ do not depend on the choice of
representatives $S$ of $\Omega_q/\Gamma_{\Galg}$. Indeed, if $\omega\in
M_{\alpha,q}$ and $\omega'\in E_q$ is such that there is
$f\in\Gamma_{\Galg}$ with $\omega'(\al_q)=f(\omega(\al_q))$, then
$\omega'\in M_{\alpha,q}$ because $f$ and $f_{\alpha}$ commute.  
Moreover, for $\omega\in M_{\alpha,q}$, every $\lambda_{\omega,\beta}$
with $\beta\in\widetilde{\Delta}_{\min,\Galg}$ lies in $V_{\alpha}$
(by Lemma~\ref{pointfixeinvariant}, because $\cal A_{\Galg}$ is cyclic).
Therefore, we have
$$\Theta_{q,\alpha}=\bigsqcup_{\omega(\al_q)\in
S_{\alpha}}\{\Gamma_{\Galg}\cdot\lambda_{\omega,\beta}\,|\,\beta\in\widetilde{\Delta}_{\min,\Galg}\}.$$
As above, we conclude that $|\Theta_{q,\alpha}|=|M_{\alpha,q}|$, and the
result comes from Theorem~\ref{alovefixe}.
\end{proof}

\begin{proposition}
Suppose that $F_0$ is not trivial and that $q\equiv -1\mod |\cal
A_{\Galg}|$. Moreover, assume that $\Galg$ is not of type $D_{2n}$.
Then the conclusion of Proposition~\ref{prop:qmoinsun} holds.
\label{prop:qplusun}
\end{proposition}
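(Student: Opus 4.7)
My plan is to follow the scheme of Proposition~\ref{prop:qmoinsun} almost verbatim, the new input being a different construction of the element $r_\alpha\in W_{a,q}$ introduced in Theorem~\ref{alovefixe} that makes the identity $r_\alpha F^{-1}(\om_\alpha^\vee)=\om_\alpha^\vee$ transparent. Once this is established for every $\alpha\in\widetilde{\Delta}_{\min,\Galg}$, Lemma~\ref{lemme:img0} gives $m(\alpha)=\alpha$ and $\om_\alpha^\vee\in S_q$, so $\Gamma_\Galg$ acts on $\Theta_q$. The remainder of the argument of Proposition~\ref{prop:qmoinsun}---the characterization of $F$-stable orbits via Lemma~\ref{elementFstable}, the comparison with $|E_q|=q^{|\Delta|}$ to identify $\Theta_q/\Gamma_\Galg$ with the set of $F$-stable semisimple classes of $\Galg$, and the formula $|\Theta_{q,\alpha}|=q^{\dim V_\alpha}$ via Theorem~\ref{alovefixe}---then carries over without change.

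The key observation is that one can take $r_\alpha:=f_\alpha f_{\alpha,q}^{-1}$. Indeed, Lemma~\ref{diag1} yields $f_{\alpha,q}=F^{-1}f_\alpha F$, so $f_{\alpha,q}$ stabilizes $\al_q=F^{-1}(\al)$, and consequently $r_\alpha(\al_q)=f_\alpha(\al_q)$; the element sits a priori in $\widehat{W}_{a,q}$, with explicit form
$$r_\alpha=z_\alpha z_{\rho(\alpha)}^{-1}\;+\;\Bigl(\om_\alpha^\vee-\tfrac{1}{q}z_\alpha z_{\rho(\alpha)}^{-1}(\om_{\rho(\alpha)}^\vee)\Bigr).$$
Showing $r_\alpha\in W_{a,q}$ reduces to checking that its translation part lies in $F^{-1}(Y(\Talg_{\Sc}))=\tfrac{1}{q}Y(\Talg_{\Sc})$, equivalently that $q\om_\alpha^\vee-z_\alpha z_{\rho(\alpha)}^{-1}(\om_{\rho(\alpha)}^\vee)$ belongs to $Y(\Talg_{\Sc})$. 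Since $W$ acts trivially on $\cal A=Y(\Talg_{\ad})/Y(\Talg_{\Sc})\cong\operatorname{Z}(\Galg_{\Sc})$, this vector projects modulo $Y(\Talg_{\Sc})$ onto $qz_\alpha-z_{\rho(\alpha)}\in\cal A$. Granting that $F_0$ acts on $\cal A$ by inversion, one has $z_{\rho(\alpha)}=-z_\alpha$, hence $qz_\alpha-z_{\rho(\alpha)}=(q+1)z_\alpha$; this vanishes in $\cal A_\Galg$ by $q\equiv-1\bmod|\cal A_\Galg|$ together with $z_\alpha\in\cal A_\Galg$, and therefore in $\cal A$. Thus $r_\alpha\in W_{a,q}$, and by simple transitivity of $W_{a,q}$ on its alcoves it coincides with the element of Theorem~\ref{alovefixe}; the identity $r_\alpha^{-1}f_\alpha=f_{\alpha,q}$ then reads $m(\alpha)=\alpha$, as required.

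The main obstacle is the algebraic verification that $F_0$ acts on $\cal A$ by inversion in each of the situations allowed by the hypotheses. Since $\Galg$ is not of type $D_{2n}$, the group $\cal A$ is cyclic; any automorphism of order dividing $2$ is therefore either trivial or inversion, so it suffices to rule out the trivial case whenever $F_0$ is nontrivial. A case-by-case check on the three relevant families does this: for $A_n$, the symmetry $\alpha_i\mapsto\alpha_{n+1-i}$ sends $z_{\alpha_i}=z_{\alpha_1}^i$ to $z_{\alpha_1}^{n+1-i}=z_{\alpha_i}^{-1}$; for $E_6$, the swap $\alpha_1\leftrightarrow\alpha_6$ implements inversion on $\cal A\cong\Z/3$; and for $D_{2n+1}$, the swap of the two branching nodes of the Dynkin diagram exchanges a generator of $\cal A\cong\Z/4$ with its inverse, as one reads off Table~\ref{tab:DimInvariant}. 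Once this algebraic fact is recorded, the proof reduces to translating the computations of Proposition~\ref{prop:qmoinsun} with $q-1$ systematically replaced by $q+1$ and the trivial action of $F_0$ replaced by the inversion action.
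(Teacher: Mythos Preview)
Your proof is correct and follows the same overall strategy as the paper: establish $m(\alpha)=\alpha$ (equivalently $\om_\alpha^\vee\in S_q$) for every $\alpha\in\widetilde{\Delta}_{\min,\Galg}$, after which the argument of Proposition~\ref{prop:qmoinsun} applies verbatim. Both proofs hinge on the same algebraic fact---that $\rho$ acts on the cyclic group $\cal A$ by inversion---which the paper simply cites from the Bourbaki Planches while you verify it case by case.

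The one genuine difference is your construction of $r_\alpha$. The paper builds $r_\alpha$ by hand as a product $t\,\widetilde{z}_\alpha\,z_\alpha$, where $\widetilde{z}_\alpha$ is obtained from a reduced expression of $z_\alpha$ by replacing each $s_\beta$ with a shifted reflection $\widetilde{s}_\beta=s_\beta-\frac{\delta_{\alpha\beta}}{q}\alpha^\vee$, and then tracks $\al_q$ and $\om_\alpha^\vee$ through this composite. Your choice $r_\alpha=f_\alpha f_{\alpha,q}^{-1}$ is cleaner: it makes the identity $r_\alpha^{-1}f_\alpha=f_{\alpha,q}$ (hence $m(\alpha)=\alpha$) tautological, and the only thing left to verify is membership in $W_{a,q}$, which reduces to the single congruence $(q+1)[\om_\alpha^\vee]\equiv 0$ in $\cal A$. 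This bypasses the explicit alcove computations in the paper and uses Lemma~\ref{lemme:img0} to conclude $\om_\alpha^\vee\in S_q$ rather than computing $r_\alpha F^{-1}(\om_\alpha^\vee)$ directly. The trade-off is minimal: your argument is shorter and more structural, while the paper's is more explicit about the geometry.
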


\begin{proof}
Let $\alpha\in\widetilde{\Delta}_{\min}$. For every $\beta\in\Delta$, we
define
$\widetilde{s}_{\beta}=s_{\beta}-\frac{\delta_{\alpha\beta}}{q}
\cdot\alpha^{\vee}$.
Write $z_{\alpha}=\prod_{\beta\in I}s_{\beta}$ for some index subset $I$ of 
$\Delta$ and
define $$\widetilde{z}_{\alpha}=\prod_{\beta\in
I}\widetilde{s}_{\beta}\in W_{a,q}.$$ 
Note that $z_{\alpha}(\al_q)= \al_q-\frac{1}{q}\cdot\om_{\alpha}^{\vee}$
(by Proposition~\ref{DescInvariantAlcove}) and
$\widetilde{z}_{\alpha}(\al_q-\frac{1}{q}\cdot\om_{\alpha}^{\vee})=\al_q
-\frac{2}{q}\cdot\om_{\alpha}^{\vee}$.
Furthermore, we have $F_0 z_{\alpha}F_0^{-1}=z_{\rho^{-1}(\alpha)}$ (see the
proof of Lemma~\ref{diag1}). Moreover, $\rho$ acts on
$\cal A$ by $x\mapsto x^{-1}$ (because $\cal A$ is cyclic;
see~\cite[Planche I-IX]{Bourbaki456}). This implies that
$f_{\rho^{-1}(\alpha)}f_{\alpha}=\operatorname{Id}$. Hence
$$F_0z_{\alpha}F_0^{-1}(\frac{1}{q}\cdot\om_{\alpha}^{\vee})=
-\frac{1}{q}\cdot\om_{\rho(\alpha)}^{\vee}+f_{\rho^{-1}(\alpha)}
f_{\alpha}(0),$$
and we deduce that
$z_{\alpha}(F_0^{-1}(\frac{1}{q}\cdot\om_{\alpha}^{\vee}))=
-\frac{1}{q}\cdot\om_{\alpha}^{\vee}$.
Note that
$\widetilde{z}_{\alpha}(-\frac{1}{q}\cdot\om_{\alpha}^{\vee})=-\frac{1}{q}\cdot\om_{\alpha}^{\vee}$.
Since $|\cal A_{\Galg}|$ divides $(q+1)$ the translation $t$ of vector
$(q+1)\cdot\frac{1}{q}\om_{\alpha}^{\vee}$ lies in $W_a$. We set
$r_{\alpha}=t\widetilde{z}_{\alpha} z_{\alpha}\in W_a$. Then
$r_{\alpha}(\al_q)\subset \al$ and $\om_{\alpha}^{\vee}$ lies in
$r_{\alpha}(\al_q)$. Thus $f_{\alpha}(\al_q)=r_{\alpha}(\al_q)$.
Moreover, we have
$$r_{\alpha}F_0^{-1}(\frac{1}{q}\cdot\om_{\alpha}^{\vee})=\om_{\alpha}^{\vee}.$$
This proves that $\om_{\alpha}^{\vee}\in S_{\alpha}$ and we conclude as
in the proof of Proposition~\ref{prop:qmoinsun}.
\end{proof}

\begin{remark}
Suppose that $\Galg$ is of type $D_{2n+1}$ and that $p$ is odd. Assume
that $\Galg$ is of adjoint type. Thanks
to Propositions~\ref{prop:qmoinsun} and~\ref{prop:qplusun}, we have
$$
\renewcommand{\arraystretch}{1.2}
\begin{array}{c|cccc}
\alpha&-\alpha_0&\alpha_1&\alpha_{2n}&\alpha_{2n+1}\\
\hline
|\Theta_{q,\alpha}|&q^{2n+1}&q^{2n-1}&q^n&q^n
\end{array}$$
Note that we have in fact
$\Theta_{q,\alpha_{2n}}=\Theta_{q,\alpha_{2n+1}}$.
We denote by $c_d(q)$ a set of representatives (chosen to be $F$-stable) 
of $F$-stable semisimple classes of
$\Galg$ whose centralizer component group has order $d$. 
Since $\Galg$ is of adjoint type, we have $\cal A_{\Galg}=\cal
A\simeq \Z_4$ and $d|4$. 
Furthermore, since $V_{\alpha_{2n}}\subseteq
V_{\alpha_1}\subseteq V_{-\alpha_0}$, we have
\begin{equation}
|c_4(q)|=q^n,\quad
|c_2(q)|=q^{2n-1}-q^n\quad\textrm{and}\quad|c_1(q)|=q^{2n+1}-q^{2n-1}.
\label{eq:centDn}
\end{equation}
Therefore, Equation~(\ref{eq:nbclass}) implies that
$$|s(\Galg^{F})|=q^{2n+1}+q^{2n-1}+2q^n,$$
and we retrieve the result of~\cite[Table 1]{Br8}. Now,
using~\cite[Prop. 4.1]{Br8} (note that $\Galg^*$ is simply-connected), 
if we denote by $\Irr_{p'}(\Galg^{*F^*})$ the
set of irreducible $p'$-characters of $\Galg^{*F^*}$, then we deduce that
$$|\Irr_{p'}(\Galg^{*F^*})|=q^{2n+1}+3q^{2n-1}+12q^n.$$
Comparing with~\cite[Prop. 5.10]{Br8}, this proves that the ordinary McKay
Conjecture holds for $\Galg^{*F^*}$ at the prime $p$.
\label{rk:McKayDn}
\end{remark}

\section{Semisimple characters}\label{section2}
\subsection{Stable semisimple and regular
characters}\label{subsectionSemi}
In this section we keep the notation of Section~\ref{sec:fixed-frob} and
suppose that the Frobenius map $F:\Galg\rightarrow\Galg$ is split (i.e
the map $F_0:V\rightarrow V$ defined on p.\pageref{defF0} is trivial).
Moreover, we assume that $p$ is a good prime for $\Galg$.  For every
$\alpha\in\Phi$, we write $\Xalg_{\alpha}$ for the corresponding
one-dimensional subgroup of $\Galg$ normalized by $\Talg$ and choose an
isomorphism $x_{\alpha}:\overline{\F}_p\rightarrow\Xalg_{\alpha}$ in
such a way that $F(x_{\alpha}(u))=x_{\alpha}(u^q)$.  Let $\rho$ be a
symmetry of the Dynkin diagram. Then we write
$\sigma:\Galg\rightarrow\Galg$ for the graph automorphism on $\Galg$
defined for all $\alpha\in\Phi$ and $u\in\overline{\F}_p$ by
$\sigma(x_{\alpha}(u))=x_{\rho(\alpha)}(\gamma_{\alpha} u)$ where
$\gamma_{\alpha}=\pm 1$ is chosen such that $\gamma_{\pm \alpha}=1$ for
all $\alpha\in\Delta$; see~\cite[12.2.3]{carter1}.
Note that $F$ and $\sigma$ commute.  We denote by $\Ualg$ the unipotent
radical of $\Balg$. Recall that $\Balg=\Ualg\rtimes\Talg$ and that
$\Ualg=\prod_{\alpha\in\Phi^+}\Xalg_{\alpha}$.  Note that the product in
the last equation is the inner product of $\Galg$. Now, we define the
normal subgroup $$\Ualg_0=\prod_{\alpha\in\Phi^+\backslash
\Delta}\Xalg_{\alpha}\subseteq\Ualg$$ and the quotient
$\Ualg_1=\Ualg/\Ualg_0$ (with canonical projection map
$\pi_{\Ualg_0}:\Ualg\rightarrow \Ualg_1$). Then we have
$\Ualg_1\simeq\prod_{\alpha\in\Delta}\Xalg_{\alpha}$ (as direct
product), and $\Ualg_0$ is $F$-stable and connected, which implies
\begin{equation}
\label{eq:U1}
\Ualg_1^F\simeq\prod_{\alpha\in\Delta} \Xalg_{\alpha}^F,
\end{equation}
(as direct product), because $\Xalg_{\alpha}$ is $F$-stable for every
$\alpha\in\Delta$. Fix $u_1\in\Ualg^F$ such that
$\pi_{\Ualg_0}(u_1)_{\alpha}\neq 1$ for all $\alpha\in\Delta$ (such an
element is regular) and recall that $A_{\Galg}(u_1)=\operatorname
Z(\Galg)$, because $p$ is a good prime for $\Galg$;
see~\cite[14.15,\,14.18]{DM}. Then by the Lang-Steinberg theorem, we can parametrize the
$\Galg^F$-classes of regular elements by
$H^1(F,\operatorname{Z}(\Galg))$~\cite[14.24]{DM}. For $z\in
H^1(F,\operatorname{Z}(\Galg))$, we denote by $\cal U_z$ the
corresponding class of $\Galg^F$.  Furthermore, a linear character
$\phi\in\Irr(\Ualg^F)$ is regular if it has $\Ualg_0^F$ in its kernel,
and if the induced character on $\Ualg_1^F$ (also denoted by the same
symbol) satisfies $\Res_{\Xalg_{\alpha}^F}^{\Ualg_1^F}(\phi)\neq
1_{\Xalg_{\alpha}^F}$ for all $\alpha\in\Delta$. By~\cite[14.28]{DM}, we also can
parametrize the $\Talg^F$-orbits of regular characters of $\Ualg^F$ by
$H^1(F,\operatorname{Z}(\Galg))$. For this, we fix a regular character
$\phi_1$ of $\Ualg^F$. Then, for every $z\in
H^1(F,\operatorname{Z}(\Galg))$, the regular character
$\phi_z=^{t_{z}}\!\!\phi_1$, where $t_z$ is an element of $\Talg$ such
that $t_{z}^{-1}F(t_z)\in z$, is a representative of the $\Talg^F$-orbit
corresponding to $z$.

Now, for $z\in H^1(F,\operatorname{Z}(\Galg))$, we define the
corresponding Gelfand-Graev character by setting
$$\Gamma_{z}=\Ind_{\Ualg^F}^{\Galg^F}(\phi_z).$$ We denote by
$D_{\Galg^F}$ the duality of Alvis-Curtis and define
$\Irr_r(\Galg^F)=\{\chi\in\Irr(\Galg^F)\,|\,\exists z\in
H^1(F,\operatorname{Z}(\Galg)),\ \cyc{\chi,\Gamma_z}\neq 0\}$ and
\begin{equation}
\Irr_{s}(\Galg^F)=\{\epsilon_{\chi}D_{\Galg^F}(\chi)\,|\,\chi\in
\Irr_r(\Galg^F)\},
\label{eq:charsemi}
\end{equation}
where $\epsilon_{\chi}$ is a sign chosen to be such that
$\epsilon_{\chi}D_{\Galg^F}(\chi)\in\Irr(\Galg^F)$.  The elements of
$\Irr_r(\Galg^F)$ (resp. of $\Irr_s(\Galg^F)$ are the so-called regular
characters (resp. semisimple characters) of $\Galg^F$.
In order to describe more precisely the sets $\Irr_s(\Galg^F)$ and
$\Irr_r(\Galg^F)$, we first introduce further notation. We choose a
$\sigma$- and $F$-stable torus $\Talg_0$ containing
$\operatorname{Z}(\Galg)$ and we consider the connected reductive group
\begin{equation}
\label{eq:Gtilde}
\widetilde{\Galg}=\Talg_0\times_{\operatorname{Z}(\Galg)}\Galg,
\end{equation}
where $\operatorname{Z}(\Galg)$ acts on $\Galg$ and on $\Talg_0$ by
translation. We extend $\sigma$ and $F$ to $\widetilde{\Galg}$. Note that
$\widetilde{\Galg}$ has connected center and the derived subgroup of
$\widetilde{\Galg}$ contains $\Galg$. Furthermore,
$\widetilde{\Talg}=\Talg_0\Talg$ is an $F$-stable maximal torus of
$\widetilde{\Galg}$ contained in the $F$-stable Borel subgroup
$\widetilde{\Balg}=\Ualg\rtimes\widetilde{\Talg}$ of
$\widetilde{\Galg}$.
Moreover, we write $(\Galg^*,F^*)$ and $(\widetilde{\Galg}^*,F^*)$ for
pairs dual to $(\Galg, F)$ and $(\widetilde{\Galg},F)$, respectively.
Then the embedding $i:\Galg\rightarrow\widetilde{\Galg}$ induces a
surjective homomorphism $i^*:\widetilde{\Galg}^*\rightarrow\Galg^*$.
Now, we write $\cal T$ and $\widetilde{\cal T}$ for a
set of $F^*$-stable representatives of $s(\Galg^{*})^{F^*}$ and
$s(\widetilde{\Galg}^{*})^{F^*}$. Note that
$s(\widetilde{\Galg})^{F^*}=s(\widetilde{\Galg}^{F^*})$ because the
center of $\widetilde{\Galg}$ is connected, and
$\widetilde{\cal T}$ is then a system of representatives of the
semisimple classes of $\widetilde{\Galg}^{F^*}$. Furthermore, for
$s\in\cal T$, the $F^*$-stable $\Galg^*$-classes of $s$ are parametrized by
the group $H^1(F^*,A_{\Galg^*}(s))$. For $a\in H^1(F^*,A_{\Galg^*}(s))$,
we denote by $s_a$ an $F^*$-stable representative of the
$F^*$-stable class corresponding to $a$. Then
the set 
\begin{equation}
\label{eq:paramclass}
\cal S=\bigsqcup_{s\in\cal T}\left\{s_a\,|\,
a\in H^1(F^*,A_{\Galg^*}(s))\right\}
\end{equation}
is a set of representatives of $s(\Galg^{*F^*})$.  Note that the
elements of $\widetilde{\cal T}$ are chosen such that, if $s\in\cal S$,
there is $\widetilde{s}\in\widetilde{\cal T}$ with
$i^*(\widetilde{s})=s$.
Now, for any semisimple element $s\in\Galg^{*F^*}$ and
$\widetilde{s}\in\widetilde{\Galg}^{*F^*}$, we denote by $\cal
E(\Galg^F,s)\subseteq\Irr(\Galg^F)$ and $\cal
E(\widetilde{\Galg}^F,\widetilde{s})\subseteq\Irr(\widetilde{\Galg}^{F})$ the
corresponding rational Lusztig series. Recall that $\cal E(\Galg^F,s)$
consists of the irreducible constituents of Deligne-Lusztig characters
$R_{\Talg_w^{*}}^{\Galg}(s)$ with $s\in\Talg_w^*$, where $\Talg_w^*$
denotes a maximal torus of $\Galg^*$ obtained by twisting $\Talg^*$ by
$w\in W$, and we have
$$\Irr(\Galg^F)=\bigsqcup_{s\in \cal S}\cal
E(\Galg^F,s)\quad\textrm{and}\quad\Irr(\widetilde{\Galg}^F)=
\bigsqcup_{\widetilde{s}\in\widetilde{\cal
T}}\cal E(\widetilde{\Galg}^F,\widetilde{s}).$$
For $\widetilde{s}\in\widetilde{\cal T}$, let
$W^\circ(\widetilde{s}) \subseteq W$ be the Weyl group
of $\Cen_{\Galg^*}^\circ(\widetilde{s})$. We define  
\begin{eqnarray}
\rho_{\widetilde{s}}&=&\displaystyle{\frac{
\label{eq:rhos}
1}{|W^\circ(\widetilde{s})|}
\sum_{w\in W^\circ(\widetilde{s})}R_{{\Talg}_w}^{{\Galg}}
(\widetilde{s}),}\\
\chi_{\widetilde{s}}&=&\displaystyle{\frac{\varepsilon_{{\Galg}} \label{eq:chis}
\varepsilon_{\Cen_{{\Galg}^*}^\circ(\widetilde{s})}}{|W^\circ(\widetilde{s})|}
\sum_{w\in W^\circ(\widetilde{s})}\varepsilon(w)R_{{\Talg}_w}^{
{\Galg}}(\widetilde{s}),}
\end{eqnarray}
where $\varepsilon$ is the sign character of $W$ and
$\varepsilon_{{\Galg}}=(-1)^{\operatorname{rk}_{\F_q}({\Galg
})}$. Here, $\operatorname{rk}_{\F_q}({\Galg})$ denotes the $\F_q$-rank of
${\Galg}$; see~\cite[8.3]{DM}.
Then we have
$\Irr_s(\widetilde{\Galg}^F)=\{\rho_{\widetilde{s}}\,|\,\widetilde{s}
\in \widetilde{\cal T}\}$ and
$\Irr_r(\widetilde{\Galg}^F)=\chi_{\widetilde{s}}\,|\,\widetilde{s}\in\widetilde{\cal
T}\}$. Let $s\in\cal T$. Write $\widetilde{s}\in\widetilde{\cal T}$ such
that $i^*(\widetilde{s})=s$ and define
\begin{equation}
\label{eq:rhoS}
\chi_s=\Res_{\Galg^F}^{\widetilde{\Galg}^F}(\chi_{\widetilde{s}})
\quad\textrm{and}\quad
\rho_s=\Res_{\Galg^F}^{\widetilde{\Galg}^F}(\rho_{\widetilde{s}}).
\end{equation}
Furthermore, for $s\in \cal S$, we recall that
there is a surjective group homomorphism~\cite[(8.4)]{BonnafeAn}
$$\hat{\omega}_s^0:H^1(F,\operatorname{Z}(\Galg))\rightarrow\Irr\left(A_{\Galg^*}(s)^{F^*}\right).$$
We now can recall the following result~\cite[Proposition 15.3,\,
Corollaire 15.14]{BonnafeAn}.

\begin{theorem}
For every $s\in\cal S$, we have
$\cyc{\Gamma_1,\chi_s}_{\Galg^F}=1$. We write $\chi_{s,1}$ for the
common constituent and put
$\rho_{s,1}=\varepsilon_{\Galg}\varepsilon_{\Cen_{\Galg^*}^{\circ}(s)}
D_{\Galg}(\chi_{s,1})$. Moreover, for $\xi\in\Irr(A_{\Galg^*}(s)^{F^*})$, we
define $$\chi_{s,\xi}=^{t_z}\!\!\chi_{s,1}\quad\textrm{and}\quad
\rho_{s,\xi}=^{t_z}\!\!\rho_{s,1},$$ where $z$ is any elements of
$H^1(F,\cal Z(\Galg))$ such that $\hat{\omega}_s^0(z)=\xi$ and
$t_z\in\Talg$ with $t_z^{-1}F(t_z)\in z$. Then 
\begin{enumerate}
\item For $z\in H^1(F,\operatorname{Z}(\Galg))$ and
$\xi\in\Irr(A_{\Galg^*}(s)^{F^*})$, the character
$\chi_{s,\xi}$ (resp. $\rho_{s,\xi}$) is
an irreducible constituent of $\Gamma_z$ (resp. of
$D_{\Galg}(\Gamma_z)$), if and only if $\xi=\hat{\omega}_s^0(z)$.
\item We have
$$\chi_s=\sum_{\xi\in\Irr(A_{\Galg^*}(s)^{F^*})}\chi_{s,\xi}\quad\textrm{and}
\quad
\rho_s=\sum_{\xi\in\Irr(A_{\Galg^*}(s)^{F^*})}\rho_{s,\xi}.$$
\item We have $\Irr_s(\Galg^F)=\{\rho_{s,\xi}\,|\,s\in\cal
S,\ \xi\in\Irr(A_{\Galg^*}(s)^{F^*})\}$ and 
$\Irr_r(\Galg^F)=\{\chi_{s,\xi}\,|\,s\in\cal
S,\,\xi\in\Irr(A_{\Galg^*}(s)^{F^*})\}$.
\end{enumerate}
\label{decrsemi}
\end{theorem}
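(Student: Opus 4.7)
The plan is to reduce every assertion to the analogous statement in the group $\widetilde\Galg$ with connected center and then transfer back to $\Galg$ via Clifford theory applied to the restriction $\operatorname{Res}_{\Galg^F}^{\widetilde\Galg^F}$, using the surjection $\hat\omega_s^0$ to index the constituents.

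I would first establish the multiplicity-one statement $\cyc{\Gamma_1,\chi_s}_{\Galg^F}=1$. Since $\widetilde\Balg=\Ualg\rtimes\widetilde\Talg$, the two groups share the same unipotent radical, so by transitivity of induction $\widetilde\Gamma_1=\Ind_{\Ualg^F}^{\widetilde\Galg^F}\phi_1=\Ind_{\Galg^F}^{\widetilde\Galg^F}\Gamma_1$. Combining this with $\chi_s=\Res_{\Galg^F}^{\widetilde\Galg^F}\chi_{\widetilde s}$ and Frobenius reciprocity reduces the claim to $\cyc{\widetilde\Gamma_1,\chi_{\widetilde s}}_{\widetilde\Galg^F}=1$, which is the classical fact that in a group with connected center each rational Lusztig series contains a unique regular character and it occurs with multiplicity one in the Gelfand-Graev character. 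I then define $\chi_{s,1}$ as this common constituent and $\rho_{s,1}=\varepsilon_\Galg\varepsilon_{\Cen_{\Galg^*}^\circ(s)}D_\Galg(\chi_{s,1})$, noting that Alvis-Curtis duality $D_\Galg$ is an isometry so regularity and semisimplicity are exchanged.

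For Part (1), I would verify that the twist $\chi_{s,\xi}={}^{t_z}\chi_{s,1}$ is well-defined, i.e., it depends only on $\xi=\hat\omega_s^0(z)$ and not on the chosen representative $z$ or lift $t_z$; this compatibility is built into the very construction of $\hat\omega_s^0$ in~\cite{BonnafeAn}. Since conjugation by $t_z$ sends $\phi_1$ to $\phi_z$ and therefore $\Gamma_1$ to $\Gamma_z$, the equivalence "$\chi_{s,\xi}$ is a constituent of $\Gamma_z$" is obtained from the multiplicity-one result above by twisting: ${}^{t_z^{-1}}\chi_{s,\xi}$ appears in $\Gamma_1$ iff the twist undoes the $t_z$-action, i.e.\ iff $\xi=\hat\omega_s^0(z)$. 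Applying $\varepsilon_\Galg\varepsilon_{\Cen_{\Galg^*}^\circ(s)}D_\Galg$ transports the regular-character statement to the semisimple-character statement.

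For Part (2), Clifford theory applied to $\Galg^F\trianglelefteq\widetilde\Galg^F$ decomposes $\chi_s=\Res\chi_{\widetilde s}$ into the $\widetilde\Galg^F$-conjugates of $\chi_{s,1}$, and the quotient $\widetilde\Galg^F/\Galg^F\operatorname{Z}(\widetilde\Galg)^F$ acts on $\Irr_r(\Galg^F)$ by conjugation, producing an orbit of size $|A_{\Galg^*}(s)^{F^*}|$ via the surjection $\hat\omega_s^0:H^1(F,\operatorname{Z}(\Galg))\to\Irr(A_{\Galg^*}(s)^{F^*})$; these conjugates are precisely the $\chi_{s,\xi}$. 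The corresponding decomposition of $\rho_s$ follows by applying $D_\Galg$. Finally, Part (3) is a counting argument: the $\chi_{s,\xi}$ are pairwise distinct by Part (1), and their total number $\sum_{s\in\cal S}|A_{\Galg^*}(s)^{F^*}|$ equals $|s(\Galg^{*F^*})|$ by~(\ref{eq:paramclass}), which by Lusztig's parametrization coincides with $|\Irr_r(\Galg^F)|=|\Irr_s(\Galg^F)|$, so we have exhausted both sets. The main obstacle in this plan is the precise compatibility used in Part (1) between the $\Talg^F$-twisting $\phi_1\mapsto\phi_z$ on the unipotent side and the character $\hat\omega_s^0(z)$ on the centralizer side; carrying this out rigorously requires a careful unpacking of $\hat\omega_s^0$ as constructed in~\cite{BonnafeAn} and of how a semisimple class with disconnected centralizer of $\Galg^*$ detects cocycles in $H^1(F,\operatorname{Z}(\Galg))$.
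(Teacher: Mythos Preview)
The paper does not give its own proof of this theorem: it is stated as a recollection of \cite[Proposition 15.3, Corollaire 15.14]{BonnafeAn}, so there is nothing in the paper to compare against beyond the citation itself. Your sketch is along the standard lines by which such results are established in Bonnaf\'e's monograph---reduction to $\widetilde\Galg$ via Frobenius reciprocity, Clifford theory for $\Galg^F\trianglelefteq\widetilde\Galg^F$, and control of the twisting via $\hat\omega_s^0$---and you correctly flag that the delicate point is the compatibility between the $t_z$-twist on $\phi_1$ and the value $\hat\omega_s^0(z)$, which is exactly what \cite[\S8, \S15]{BonnafeAn} is devoted to.

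One small slip: in your counting for Part~(3) you assert that $\sum_{s\in\cal S}|A_{\Galg^*}(s)^{F^*}|$ equals $|s(\Galg^{*F^*})|$ ``by~(\ref{eq:paramclass})''. Equation~(\ref{eq:paramclass}) only gives $|\cal S|=|s(\Galg^{*F^*})|$; the sum you wrote is in general strictly larger. A cleaner way to finish Part~(3) is to argue directly: every regular character lies in some $\Gamma_z$, and by Part~(1) the constituents of $\Gamma_z$ in the series $\cal E(\Galg^F,s)$ are exactly the $\chi_{s,\xi}$ with $\xi=\hat\omega_s^0(z)$, so the union over all $z$ and $s$ exhausts $\Irr_r(\Galg^F)$; pairwise distinctness of the $\chi_{s,\xi}$ for fixed $s$ follows from Part~(2) together with $\langle\chi_s,\chi_s\rangle=|A_{\Galg^*}(s)^{F^*}|$.
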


\begin{convention}
\label{conv1}
The regular character $\phi_1\in\Irr(\Ualg^F)$ will be
chosen to be $\sigma$-stable. This choice is possible
by~\cite[3.1]{BrHi} (note that Lemma 3.1 of~\cite{BrHi} is stated for a
Frobenius map $F'$ which commutes with $F$. But the argument is still
valuable for a graph automorphism commuting with $F$). 
\end{convention}

\begin{proposition}
Assume that $\phi_1\in\Irr(\Ualg^F)$ is chosen as in
Convention~\ref{conv1}. 
For every $z\in H^1(F,\operatorname{Z}(\Galg))$, we have
$${}^{\sigma}\Gamma_z=\Gamma_{\sigma(z)}\quad\textrm{and}\quad
{}^{\sigma}D_{\Galg^F}(\Gamma_z)=D_{\Galg^F}(\Gamma_{\sigma(z)}).$$
Moreover, the operation of $\cyc{\sigma}$ on the set of constituents of
$\Gamma_1$ and of $\cyc{\sigma^{*-1}}$ on $s(\Galg^{*F^*})$ commute,
and for $s\in\cal S$, if the $\Galg^{*F^*}$-class of
$s$ is $\sigma^*$-stable, where $\sigma^*:\Galg^*\rightarrow\Galg^*$
denotes the automorphism of $\Galg^*$ obtained in dualizing $\sigma$,
then for every $z\in H^1(F,\operatorname{Z}(\Galg))$, we have
$$^{\sigma}\chi_{s,\hat{\omega}_s^0(z)}=\chi_{s,\hat{\omega}_s^0(\sigma(z))}
\quad\textrm{and}\quad
^{\sigma}\!\rho_{s,\hat{\omega}_s^0(z)}=\rho_{s,\hat{\omega}_s^0(\sigma(z))}.
$$
\label{perm}
\end{proposition}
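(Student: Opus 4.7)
The plan is to split the argument into three steps: first handling the Gelfand--Graev identity directly from the definitions, then transferring the $\sigma$-action to Lusztig series via the characterization of $\chi_{s,1}$ in Theorem~\ref{decrsemi}, and finally deducing the labelled formulas.

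First I would prove ${}^{\sigma}\Gamma_z = \Gamma_{\sigma(z)}$. Since $\sigma$ commutes with $F$ and preserves $\Ualg$ (as it permutes the positive root subgroups via $\rho$), conjugation by $\sigma$ commutes with $\Ind_{\Ualg^F}^{\Galg^F}$, giving ${}^{\sigma}\Gamma_z = \Ind_{\Ualg^F}^{\Galg^F}({}^{\sigma}\phi_z)$. Writing $\phi_z = {}^{t_z}\phi_1$ and invoking Convention~\ref{conv1} yields ${}^{\sigma}\phi_z = {}^{\sigma(t_z)}\phi_1$, and the relation
\[
\sigma(t_z)^{-1}F(\sigma(t_z)) = \sigma(t_z^{-1}F(t_z)) \in \sigma(z)
\]
shows that $\sigma(t_z)$ serves as a twist element for $\sigma(z)$, so ${}^{\sigma}\phi_z$ and $\phi_{\sigma(z)}$ generate the same $\Talg^F$-orbit of regular characters, yielding the identity. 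The dual statement follows because $\sigma$ fixes $\Balg$ and $\Talg$ and permutes the standard parabolic subgroups via $\rho$, hence commutes with the Alvis--Curtis duality $D_{\Galg^F}$.

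Second, I would establish that the $\sigma$-action on the constituents $\{\chi_{s,1}\}_{s\in\cal S}$ of $\Gamma_1$ corresponds to the $\sigma^{*-1}$-action on $s(\Galg^{*F^*})$. The key input is the standard compatibility ${}^{\sigma}R_{\Talg'}^{\Galg}(\theta) = R_{\sigma(\Talg')}^{\Galg}({}^{\sigma}\theta)$ of Deligne--Lusztig induction with $\sigma$, which through the duality between $(\Galg,F)$ and $(\Galg^*,F^*)$ translates into
\[
{}^{\sigma}\cal E(\Galg^F, s) = \cal E(\Galg^F, \sigma^{*-1}(s)).
\]
Since $\sigma$ fixes the identity $1 \in H^1(F,\operatorname{Z}(\Galg))$, we have ${}^{\sigma}\Gamma_1 = \Gamma_1$, so $\sigma$ permutes the $\chi_{s,1}$. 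By Theorem~\ref{decrsemi}(1), $\chi_{s,1}$ is the unique irreducible constituent common to $\Gamma_1$ and $\chi_s$; applying $\sigma$ sends this pair to $(\Gamma_1, \chi_{\sigma^{*-1}(s)})$, so ${}^{\sigma}\chi_{s,1} = \chi_{\sigma^{*-1}(s),1}$.

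Finally, for $s \in \cal S$ whose $\Galg^{*F^*}$-class is $\sigma^*$-stable, $\sigma^{*-1}(s)$ lies in the class of $s$, so $\chi_{\sigma^{*-1}(s),1} = \chi_{s,1}$ and the previous step specializes to ${}^{\sigma}\chi_{s,1} = \chi_{s,1}$. Applying $\sigma$ to $\chi_{s,\hat{\omega}_s^0(z)} = {}^{t_z}\chi_{s,1}$ and reusing $\sigma(t_z)^{-1}F(\sigma(t_z)) \in \sigma(z)$ gives
\[
{}^{\sigma}\chi_{s,\hat{\omega}_s^0(z)} = {}^{\sigma(t_z)}\chi_{s,1} = \chi_{s,\hat{\omega}_s^0(\sigma(z))},
\]
and the analogous identity for $\rho_{s,\hat{\omega}_s^0(z)}$ follows from $\rho_{s,\xi} = \varepsilon_{\Galg}\varepsilon_{\Cen_{\Galg^*}^\circ(s)}D_{\Galg^F}(\chi_{s,\xi})$ together with the commutation of $\sigma$ and $D_{\Galg^F}$ (noting that $\varepsilon_{\Cen_{\Galg^*}^\circ(s)}$ depends only on the $\Galg^{*F^*}$-class of $s$). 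The main obstacle is the Deligne--Lusztig compatibility ${}^{\sigma}\cal E(\Galg^F,s) = \cal E(\Galg^F,\sigma^{*-1}(s))$ in step two; while standard in principle, it requires careful tracking of the dual automorphism $\sigma^*$ and is most transparently handled by first passing to the $\widetilde{\Galg}$-picture, where the connected center makes the identification of Lusztig series cleaner, and then restricting to $\Galg^F$.
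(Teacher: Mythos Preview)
Your proposal is correct and follows essentially the same approach as the paper. The paper's proof is terser, citing \cite[Proposition~1.1]{Br6} for the Lusztig series compatibility ${}^{\sigma}\cal E(\Galg^F,s)=\cal E(\Galg^F,\sigma^{*-1}(s))$ and deferring the final twisting argument to \cite[Theorem~3.6]{BrHi}, but the logical skeleton---Lusztig series compatibility, $\sigma$-stability of $\Gamma_1$ giving $\sigma$-stability of $\chi_{s,1}$ and $\rho_{s,1}$ for $\sigma^*$-stable $s$, then conjugation by $t_z$---is exactly what you wrote out.
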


\begin{proof}
For $s\in \cal S$, we have 
\begin{equation}
\label{eq:permseries}
^{\sigma}\cal
E(\Galg^{F},s)=\cal E(\Galg^F,\sigma^{*-1}(s)).
\end{equation}
The proof is similar to~\cite[Proposition 1.1]{Br6}
(because $F$ and $\sigma$ commute). In
particular, one has $^{\sigma}\chi_s=\chi_{\sigma^{*-1}(s)}$ and
$^{\sigma}\rho_s=\rho_{\sigma^{*-1}(s)}$.  Since $\phi_1$ is
$\sigma$-stable, it follows that $^{\sigma}\Gamma_1=\Gamma_1$. 
This implies that if $s$ is $\sigma^*$-stable, then $\chi_{s,1}$ and
$\rho_{s,1}$ are $\sigma$-stable. We conclude as in the proof
of~\cite[Theorem 3.6]{BrHi}.
\end{proof}

\begin{remark}
Note that Proposition~\ref{perm} shows that
$H^1(F,\operatorname{Z}(\Galg))^{\sigma}$ parametrizes the
$\sigma$-stable Gelfand-Graev characters of $\Galg^F$.
\label{rk:gelfandstable}
\end{remark}

\begin{lemma}
Suppose that $H^1(F,\operatorname{Z}(\Galg))$ has prime order. Then every 
$\sigma$-stable regular (resp. semisimple) character of $\Galg^F$ is a
constituent of some $\sigma$-stable Gelfand-Graev character (resp.
dual of Gelfand-Graev character) of $\Galg^F$.
\label{regsigmastable}
\end{lemma}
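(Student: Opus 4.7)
The plan is to represent any $\sigma$-stable regular character of $\Galg^F$ as $\chi_{s,\xi}$ via Theorem~\ref{decrsemi}(3), to find a $\sigma$-fixed element $z\in H^1(F,\operatorname{Z}(\Galg))$ with $\hat{\omega}_s^0(z)=\xi$, and to conclude using the characterisation of constituents of $\Gamma_z$ given by Theorem~\ref{decrsemi}(1) together with Remark~\ref{rk:gelfandstable}, which identifies the $\sigma$-stable Gelfand-Graev characters with the $\Gamma_z$ for $z\in H^1(F,\operatorname{Z}(\Galg))^\sigma$.

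First I would observe that $\sigma$-stability of $\chi_{s,\xi}$ forces the $\Galg^{*F^*}$-class of $s$ to be $\sigma^*$-stable. Indeed, $\chi_{s,\xi}$ lies in the rational Lusztig series $\cal E(\Galg^F,s)$, and Equation~(\ref{eq:permseries}) gives $^{\sigma}\cal E(\Galg^F,s)=\cal E(\Galg^F,\sigma^{*-1}(s))$; disjointness of distinct series then forces $s$ and $\sigma^{*-1}(s)$ to be $\Galg^{*F^*}$-conjugate. The final clause of Proposition~\ref{perm} therefore applies, and for any $z\in H^1(F,\operatorname{Z}(\Galg))$ with $\hat{\omega}_s^0(z)=\xi$, the equality $^{\sigma}\chi_{s,\xi}=\chi_{s,\hat{\omega}_s^0(\sigma(z))}$ yields $\hat{\omega}_s^0(\sigma(z))=\xi$, i.e.\ $\sigma(z)-z\in\ker\hat{\omega}_s^0$.

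The decisive step is then the primality hypothesis. Since $\hat{\omega}_s^0$ is a surjective homomorphism and $H^1(F,\operatorname{Z}(\Galg))$ has prime order, $\ker\hat{\omega}_s^0$ is either trivial or the whole of $H^1(F,\operatorname{Z}(\Galg))$. In the first case $\sigma(z)=z$, so $\Gamma_z$ is $\sigma$-stable by Remark~\ref{rk:gelfandstable} and has $\chi_{s,\xi}$ as a constituent by Theorem~\ref{decrsemi}(1). In the second case, $A_{\Galg^*}(s)^{F^*}$ is trivial, so $\xi=1$, and $\chi_{s,1}$ is a constituent of $\Gamma_1$, which is $\sigma$-stable by Convention~\ref{conv1}.

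The semisimple version is handled in exactly the same way: Proposition~\ref{perm} gives $^\sigma D_{\Galg^F}(\Gamma_z)=D_{\Galg^F}(\Gamma_{\sigma(z)})$, and Theorem~\ref{decrsemi}(1) characterises $\rho_{s,\xi}$ as a constituent of $D_{\Galg^F}(\Gamma_z)$ under the very same condition $\hat{\omega}_s^0(z)=\xi$, so the argument above transports verbatim. The only conceptual obstacle, and the only place the primality hypothesis is actually used, is the dichotomy for $\ker\hat{\omega}_s^0$; were $|H^1(F,\operatorname{Z}(\Galg))|$ composite, $\sigma$ could permute the fibre $(\hat{\omega}_s^0)^{-1}(\xi)$ without any fixed point, and the conclusion could genuinely fail.
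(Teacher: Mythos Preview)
Your proof is correct and follows essentially the same route as the paper: write the $\sigma$-stable regular character as $\chi_{s,\xi}$, deduce that the class of $s$ is $\sigma^*$-stable, use the primality hypothesis to force $\ker\hat\omega_s^0$ to be trivial or everything, and in each case exhibit a $\sigma$-fixed $z$ with $\hat\omega_s^0(z)=\xi$. The only cosmetic difference is that you cite Remark~\ref{rk:gelfandstable} where the paper invokes Proposition~\ref{perm} directly to get $^{\sigma}\Gamma_z=\Gamma_{\sigma(z)}=\Gamma_z$, which amounts to the same thing.
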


\begin{proof}
Let $\chi$ be a $\sigma$-stable regular character of $\Galg^F$.
Thanks to Theorem~\ref{decrsemi}(3), there is $s\in \cal S$ and
$\xi\in \Irr(A_{\Galg^*}(s)^{F^*})$ such that $\chi=\chi_{s,\xi}$ and
Proposition~\ref{perm} implies that the $\Galg^{*F^*}$-class of $s$ is
$\sigma^*$-stable. Let $z\in H^1(F,\operatorname{Z}(\Galg))$ be any
element such that $\hat{\omega}_s^0(z)=\xi$. In particular,
$\chi\in\Gamma_z$ by Theorem~\ref{decrsemi}(1). Since
$H^1(F,\operatorname{Z}(\Galg))$ has prime order, we deduce that
$\ker(\hat{\omega}_s^0)$ is is either trivial or equal to
$H^1(F,\operatorname{Z}(\Galg))$.  If
$\ker(\hat{\omega}_s^0)=H^1(F,\operatorname{Z}(\Galg))$, then
$\xi=\hat{\omega}_s^0(1)$ and $\chi\in\Gamma_1$, which is 
$\sigma$-stable with our choice in
Convention~\ref{conv1}.  Suppose now that $\ker(\hat{\omega}_s^0)$ is
trivial.  By Proposition~\ref{perm}, we also have
$^{\sigma}\chi_{s,\xi}=\chi_{s,\hat{\omega}_s^0(\sigma(z))}$. It
follows from Theorem~\ref{decrsemi}(1) that $\chi$ is $\sigma$-stable,
if and only if $\hat{\omega}_s^0(\sigma(z))=\hat{\omega}_s^0(z)$,
which is equivalent to $z^{-1}\sigma(z)\in\ker(\hat{\omega}_s^0)$.
Then $\sigma(z)=z$ and Proposition~\ref{perm} implies that
$^{\sigma}\Gamma_z=\Gamma_z$, as required.  
\end{proof}

\begin{remark}
Note that in general, if $\chi$ is a $\sigma$-stable regular character
of $\Galg^F$, then $\chi$ is not necessarily a constituent of some
$\sigma$-stable Gelfand-Graev character of $\Galg^F$.  For example,
consider a simple simply-connected group $\Galg$ of type $A_3$ defined
over $\F_q$ (with $q\equiv 1\mod 4$) and suppose that the corresponding
Frobenius map $F$ is split. We denote by $\alpha_1,\,\alpha_2$ and
$\alpha_3$ the simple roots of $\Galg$ (relative to an $F$ and
$\sigma$-stable maximal torus $\Talg$ of $\Galg$) that we label 
as in~\cite[Planche I]{Bourbaki456}.
Write $\sigma$ for the non-trivial graph automorphism of $A_3$. The
condition on $q$ implies that $F$ acts trivially on
$\operatorname{Z}(\Galg)$. Hence,
$H^1(F,\operatorname{Z}(\Galg))=\operatorname{Z}(\Galg)$. Write $z_0$
for a generator of $\operatorname{Z}(\Galg)$. With the choice of
Convention~\ref{conv1}, we write $\Gamma_i=\Gamma_{z_0^{i}}$ (with
$0\leq i\leq 3$) for the $4$ Gelfand-Graev characters of $\Galg^F$. Furthermore, we will denote by $\om_i^{\vee}$ the fundamental weight
corresponding to $\alpha_i$. Recall that $\Galg^*=\Galg_{\ad}$. Define
$\lambda=\frac{1}{2}(\om_1^{\vee}+\om_3^{\vee})$ and
write $s=\tilde{\iota}(\lambda)\in \Talg_{\ad}$. 
Then $F(s)=s$ and $\sigma(s)=s$. Moreover, $\lambda$ is
stable under $f_{z_0^2}$, but not under $f_{z_0}$. Thus, by
Theorem~\ref{classdisc}, we have
$A_{\Galg^*}(\lambda)=\cyc{z_0^2}$, and $A_{\Galg^*}(s)^{F^*}\simeq\Z_2$.
Denote by $1$ and $\eta$ the irreducible characters of
$A_{\Galg^*}(s)^{F^*}$, and by $\chi_{s,1}$ and $\chi_{s,\eta}$ the
corresponding regular characters of $\Galg^F$ as in
Theorem~\ref{decrsemi}(1). Since $\sigma$ acts as $x\rightarrow
x^{-1}$ on $\operatorname{Z}(\Galg)$, we have
$$\hat{\omega}_s^0(1)=\hat{\omega}_s^0(z_0^2)=1\quad\textrm{and}\quad\hat{\omega}_s^0
(z_0)=\hat{\omega}_s^0
(z_0^3)=\eta,$$
and Proposition~\ref{perm} implies that
$\chi_{s,1}$ and $\chi_{s,\eta}$ are $\sigma$-stable. Moreover, thanks
to Theorem~\ref{decrsemi}(1), the Gelfand-Graev characters which have
$\chi_{s,1}$ (resp. $\chi_{s,\eta}$) as constituent are $\Gamma_0$ and
$\Gamma_2$ (resp. $\Gamma_1$ and $\Gamma_3$). 
However, by Proposition~\ref{perm}, we have
$$^{\sigma}\Gamma_0=\Gamma_0,\quad
^{\sigma}\Gamma_2=\Gamma_2\quad\textrm{and}\quad
^{\sigma}\Gamma_1=\Gamma_3,$$
and $\chi_{s,\eta}$ is a $\sigma$-stable regular character of
$\Galg^F$ which is constituent of no $\sigma$-stable Gelfand-Graev
characters of $\Galg^F$, as claimed.
\end{remark}

\subsection{Disconnected reductive groups}
By Clifford theory, an irreducible character $\chi$ of $\Galg^F$
is $\sigma$-stable, if and only if it extends to the group
$\Galg^F\semi\sigma$. Moreover, if $E(\chi)$ denotes an extension of
$\chi$, then Gallagher's theorem~\cite[6.17]{Isaacs} implies that every extension of
$\chi$ is obtained by tensoring $E(\chi)$ with a linear character
of $\Galg^F\rtimes \cyc{\sigma}$ trivial on $\Galg^F$. 
So, in order to obtain information about the
sets $\Irr_r(\Galg^F)^{\sigma}$ and $\Irr_{s}(\Galg^F)^{\sigma}$, we
aim to understand  the extensions of these characters to
$\Galg^F\semi\sigma$.  For this, we will consider the group
$$\Halg=\Galg\semi \sigma.$$ We extend $F$ to a Frobenius map on
$\Halg$ by setting $F(\sigma)=\sigma$ (to simplify notation, the
extended map will also be denoted by $F$). Note that $\Halg$ is a
disconnected reductive group defined over $\F_q$ (the rational
structure is given by $F$), and $\Halg^{\circ}=\Galg$. Moreover,
$\sigma$ is a rational quasi-central element in the sense
of~\cite[1.15]{DMnonconnexe}.  Now, for $i\geq 0$, we define a scalar
product on the space of class functions on the coset
$\Galg^F\cdot\sigma^i$, by setting
$$\cyc{\chi,\chi'}_{\Galg^F\cdot\sigma^i}=
\frac{1}{|\Galg^F|}\sum_{g\in\Galg^F}\chi(g\sigma^i)
\overline{\chi'(g\sigma^i)}.$$
Recall that in~\cite[4.10]{DMnonconnexe}, Digne and Michel define a
duality involution $D_{\Galg^F,\sigma^i}$ for $i\geq 0$ on the set of
class functions defined over the coset $\Galg^F\cdot\sigma^i$, and
prove in~\cite[4.13]{DMnonconnexe} that if $\chi\in\Irr(\Halg^F)$
restricts to an irreducible character on $\Galg^F$, then the class
function $D_{\Halg^F}(\chi)$ defined for all $g\in\Galg^F$ and $i\geq
0$ by 
\begin{equation}
\label{eq:defdualite}
D_{\Halg^F}(\chi)(g\sigma^i)=
D_{\Galg^F,\sigma^i}(\chi|_{\Galg^F\cdot\sigma^i})(g\sigma^i),
\end{equation}
is (up to a sign) an irreducible character of $\Halg^F$.

We suppose that $\phi_1\in\Irr(\Ualg^F)$ is chosen as in
Convention~\ref{conv1}. In particular, $\phi_1$ is
$\sigma$-stable and linear. Thus, $\phi_1$ extends to a linear
character $E(\phi_1)$ of $\Ualg^F\semi\sigma$ by setting
\begin{equation}
\label{eq:extcanonique}
E(\phi_1)(u\sigma)=\phi_1(u)\quad\forall u\in\Ualg^F.
\end{equation}
This extension is the so-called canonical extension of $\phi_1$.
We define 
\begin{equation}
\label{eq:gelfnc}
E(\Gamma_1)=\Ind_{\Ualg^F\semi\sigma}^{\Halg^F}(E(\phi_1)).
\end{equation}
Note that, as direct consequence of Mackey's theorem~\cite[(5.6)
p.74]{Isaacs}, we  have
$$\Res_{\Galg^F}^{\Halg^F}(E(\Gamma_1))=\Gamma_1.$$ Hence, $E(\Gamma_1)$
extends $\Gamma_1$. We write
$\Gamma_{1,\sigma}=\Res_{\Galg^F\cdot\sigma}(E(\Gamma_1))$. 

Write $C_1$ for the set of irreducible constituents of $\Gamma_1$ and
for $\chi\in C_1^{\sigma}$, denote by $E(\chi)$ the constituent of
$E(\Gamma_1)$ that extends $\chi$. Define
\begin{equation}
\Psi_1=\sum_{\chi\in C_1^{\sigma}}D_{\Halg^F}(E(\chi)).
\label{eq:defdualgelfand}
\end{equation}

\begin{lemma}We suppose that Convention~\ref{conv1} holds. Then we have
$$\Res_{\Galg^F\cdot\sigma}(\Psi_1)=D_{\Galg^F,\sigma}(\Gamma_{1,\sigma}),$$
\label{gamma1}
and
$\cyc{\Psi_1,\Psi_1}_{\Halg^F}=\cyc{\Gamma_{1,\sigma},\Gamma_{1,\sigma}}_{
\Galg^F\cdot\sigma}$. In particular, $$\cyc{\Gamma_{1,\sigma},\Gamma_{1,\sigma}}_{
\Galg^F\cdot\sigma}=\left|s(\Galg^{*F^*})^{\sigma^*}\right|.$$
\end{lemma}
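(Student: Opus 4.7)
The plan is to reduce both claimed equalities to two facts: (i) the coset $\Galg^F\cdot\sigma$ only ``sees'' the $\sigma$-stable constituents of $\Gamma_1$, and (ii) the duality $D_{\Halg^F}$ is compatible coset-by-coset with $D_{\Galg^F,\sigma^i}$ via~\eqref{eq:defdualite}. First I would identify $C_1$: by Theorem~\ref{decrsemi}(1) combined with part (2) of that theorem, the constituents of $\Gamma_1$ are exactly the characters $\chi_{s,1}$ for $s\in\cal S$, each with multiplicity one. Applying Clifford theory to the normal subgroup $\Galg^F\triangleleft\Halg^F$, the constituents of $E(\Gamma_1)$ arising from non-$\sigma$-stable $\chi\in C_1$ are induced from a proper overgroup of $\Galg^F$ in $\Halg^F$ and hence vanish on $\Galg^F\cdot\sigma$, while each $\sigma$-stable $\chi\in C_1^\sigma$ contributes the single extension $E(\chi)$ with multiplicity one. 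This yields the key decomposition
\begin{equation*}
\Gamma_{1,\sigma}=\sum_{\chi\in C_1^{\sigma}}\Res_{\Galg^F\cdot\sigma}(E(\chi)).
\end{equation*}
Applying the linear operator $D_{\Galg^F,\sigma}$ term by term and invoking~\eqref{eq:defdualite} then gives the first displayed equality $\Res_{\Galg^F\cdot\sigma}(\Psi_1)=D_{\Galg^F,\sigma}(\Gamma_{1,\sigma})$.

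For the scalar-product identity, my plan is to compute both sides independently and show that each equals $|C_1^\sigma|$. On the $\Halg^F$ side, the characters $E(\chi)$ for $\chi\in C_1^\sigma$ are pairwise distinct irreducibles of $\Halg^F$, so by the property of $D_{\Halg^F}$ recalled before~\eqref{eq:defdualite} (it sends an irreducible of $\Halg^F$ to a signed irreducible), the $D_{\Halg^F}(E(\chi))$ are pairwise distinct signed irreducibles, whence $\cyc{\Psi_1,\Psi_1}_{\Halg^F}=|C_1^\sigma|$. On the coset side, assuming $\sigma$ has order two (the case relevant to the paper), I would split
\begin{equation*}
\cyc{E(\chi),E(\chi')}_{\Halg^F}=\tfrac{1}{2}\cyc{\chi,\chi'}_{\Galg^F}+\tfrac{1}{2}\cyc{E(\chi)|_{\Galg^F\cdot\sigma},E(\chi')|_{\Galg^F\cdot\sigma}}_{\Galg^F\cdot\sigma}
\end{equation*}
to deduce $\cyc{E(\chi)|_{\Galg^F\cdot\sigma},E(\chi')|_{\Galg^F\cdot\sigma}}_{\Galg^F\cdot\sigma}=\delta_{\chi,\chi'}$ for $\chi,\chi'\in C_1^\sigma$. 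Expanding $\Gamma_{1,\sigma}$ via the key decomposition above and summing then yields $\cyc{\Gamma_{1,\sigma},\Gamma_{1,\sigma}}_{\Galg^F\cdot\sigma}=|C_1^\sigma|$, matching the $\Halg^F$ side.

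The last identification $|C_1^\sigma|=|s(\Galg^{*F^*})^{\sigma^*}|$ is then immediate from Theorem~\ref{decrsemi}(1) and Proposition~\ref{perm}: the correspondence $\chi_{s,1}\leftrightarrow s$ is a bijection between $C_1$ and the transversal $\cal S$ of $s(\Galg^{*F^*})$, and Proposition~\ref{perm} shows that $\chi_{s,1}$ is $\sigma$-stable precisely when the $\Galg^{*F^*}$-class of $s$ is $\sigma^*$-stable. The main obstacle is the very first step: one has to argue rigorously that in the decomposition $E(\Gamma_1)=\sum_\eta m_\eta\eta$ into irreducibles of $\Halg^F$, the contribution to the coset $\Galg^F\cdot\sigma$ consists exactly of the extensions $E(\chi)$ with $\chi\in C_1^\sigma$, each appearing once and not accompanied by its sign-twist. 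This will use the canonical nature of the extension $E(\phi_1)$ secured by Convention~\ref{conv1} together with Mackey's theorem, so that induction from $\Ualg^F\semi\sigma$ distinguishes the ``canonical'' extension of each $\sigma$-stable $\chi$.
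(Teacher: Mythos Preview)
The paper states this lemma without proof, so there is no argument to compare against. Your approach is correct and is the natural one.

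Two remarks to tighten it. First, the ``main obstacle'' you flag dissolves once you use that $\Gamma_1$ is multiplicity-free (Theorem~\ref{decrsemi}) and that $\Res_{\Galg^F}^{\Halg^F}(E(\Gamma_1))=\Gamma_1$: each $\chi\in C_1^\sigma$ then occurs with multiplicity one in this restriction, so exactly one of its extensions---and not its sign-twist as well---appears in $E(\Gamma_1)$; that extension is $E(\chi)$ by definition. The non-$\sigma$-stable constituents pair up into irreducibles of $\Halg^F$ induced from $\Galg^F$, which vanish on $\Galg^F\cdot\sigma$. No further use of the canonicity of $E(\phi_1)$ is needed beyond making $\Gamma_1$ itself $\sigma$-stable. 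Second, for the pairwise distinctness of the signed irreducibles $D_{\Halg^F}(E(\chi))$ it suffices to restrict to $\Galg^F$: by~\eqref{eq:defdualite} with $i=0$ one gets $\pm D_{\Galg^F}(\chi_{s,1})=\pm\rho_{s,1}$, and these are pairwise distinct by Theorem~\ref{decrsemi}(3). Your restriction to $\sigma$ of order two is harmless for the applications in the paper; the general case follows by the same averaging over all cosets of $\Galg^F$ in $\Halg^F$.
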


In~\cite{Sorlin}, Sorlin develops a theory of Gelfand-Graev characters for
disconnected groups when $\sigma$ is semisimple or unipotent. 
These characters are extensions of some $\sigma$-stable Gelfand-Graev
characters of $\Galg^F$ to $\Halg^F$; see~\cite[\S5]{Sorlin}. 
In particular, the following result is proven~\cite[8.3]{Sorlin}.
\begin{theorem}
Suppose that $\sigma$ is a unipotent or a semisimple element of
$\Halg^F$ and that 
$H^1(F,\operatorname{Z}(\Galg^{\sigma}))$ is trivial. Then $\Halg^F$ has a
unique Gelfand-Graev character $\Gamma$ and we have 
$$\cyc{\Gamma_{\sigma},\Gamma_{\sigma}}_{\Galg^F\cdot\sigma}=
|\operatorname{Z}(\Galg^{\sigma})^{\circ
F}|q^l,$$
where $l$ is the semisimple rank of $\Galg^{\sigma}$ and
$\Gamma_{\sigma}$ denotes the restriction of $\Gamma$ to the coset
$\Galg^F\cdot\sigma$.
\label{karine}
\end{theorem}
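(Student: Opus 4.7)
The plan is to realize $\Gamma$ as an induced character from $(\Ualg\semi{\sigma})^F$ and then reduce the coset inner product to a class-number computation on the fixed-point subgroup $\Galg^{\sigma}$, which is connected reductive by Steinberg's theorem since $\sigma$ is quasi-central. To construct $\Gamma$, I would follow the pattern of Equations~(\ref{eq:extcanonique}) and~(\ref{eq:gelfnc}): choose a $\sigma$-stable regular character $\phi_1$ of $\Ualg^F$ as in Convention~\ref{conv1}, extend it canonically to a linear character $E(\phi_1)$ of $(\Ualg\semi{\sigma})^F$, and put $\Gamma=\Ind_{(\Ualg\semi{\sigma})^F}^{\Halg^F}(E(\phi_1))$. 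The general parametrization shows that Gelfand-Graev characters of $\Halg^F$ modulo $\Halg^F$-conjugacy form a torsor under $H^1(F,\operatorname{Z}(\Galg^{\sigma}))$ (the disconnected analogue of the parametrization of Gelfand-Graev characters of $\Galg^F$ by $H^1(F,\operatorname{Z}(\Galg))$), so the triviality hypothesis yields uniqueness.

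For the inner-product formula, I would compute $\cyc{\Gamma_\sigma,\Gamma_\sigma}_{\Galg^F\cdot\sigma}$ by a Mackey-type decomposition restricted to the coset. Writing $M=(\Ualg\semi{\sigma})^F$, the values of $\Gamma$ on $\Galg^F\cdot\sigma$ are controlled by $(M,M)$-double cosets in $\Halg^F$ meeting this coset. Using the Bruhat decomposition of $\Halg$ relative to $\Balg\semi{\sigma}$ due to Digne--Michel together with the Lang--Steinberg theorem applied to the connected component of the disconnected Borel subgroup, these double cosets are indexed by the Weyl group $W(\Galg^{\sigma},\Talg^{\sigma})$ of $\Galg^{\sigma}$ with respect to its $\sigma$-stable maximal torus $\Talg^{\sigma}$. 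For each index $w$ the contribution is nonzero exactly when $\phi_1$ agrees with its $w$-twist on the appropriate unipotent intersection, in direct parallel with the classical Steinberg-style computation of $\cyc{\Gamma_1,\Gamma_1}_{\Galg^F}$ in the connected case.

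The third step is to sum these contributions and identify the total with $|s((\Galg^{\sigma})^F)|$. By Steinberg's formula applied to the connected reductive group $\Galg^{\sigma}$,
$$|s((\Galg^{\sigma})^F)|=|\operatorname{Z}(\Galg^{\sigma})^{\circ F}|\,q^l,$$
where $l$ is the semisimple rank of $\Galg^{\sigma}$ (cf.~\cite[3.7.6]{carter2} for the analogous statement for $\Galg$). This matches the combinatorial output of step two, yielding the desired formula. The identification is the disconnected analogue of the classical identity $\cyc{\Gamma_1,\Gamma_1}_{\Galg^F}=|s(\Galg^{*F^*})|$ valid when $\operatorname{Z}(\Galg)$ is connected, and the triviality of $H^1(F,\operatorname{Z}(\Galg^{\sigma}))$ plays the role of the connected-centre hypothesis from the classical setting.

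The main obstacle is this last identification: correctly matching the twisted-conjugacy conditions arising from the Mackey decomposition with those counting semisimple classes of $(\Galg^{\sigma})^F$. This requires a careful application of the theory of regular characters for disconnected reductive groups and a disconnected analogue of the Steinberg-style computation of the inner product $\cyc{\Gamma_1,\Gamma_1}_{\Galg^F}$, including the compatibility of the canonical extension $E(\phi_1)$ with the natural restriction map to $(\Ualg^{\sigma})^F$ and the interpretation of ``twisting'' by $w\in W(\Galg^{\sigma},\Talg^{\sigma})$ in the setting of the quasi-central automorphism $\sigma$.
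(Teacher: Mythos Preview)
The paper does not prove this theorem at all: it is quoted verbatim from Sorlin~\cite[8.3]{Sorlin}, as stated explicitly in the sentence preceding the theorem (``In particular, the following result is proven~\cite[8.3]{Sorlin}''). So there is no ``paper's own proof'' to compare against; your proposal is an attempt to reprove a cited external result.

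That said, your outline follows the natural strategy and is broadly in the spirit of Sorlin's argument (induction of a canonical extension of a regular character, Mackey-type analysis on the coset, reduction to a count on the fixed-point group). One point needs correction: you assert that $\Galg^{\sigma}$ is connected ``by Steinberg's theorem since $\sigma$ is quasi-central'', but this is false in general. Quasi-centrality (even combined with semisimplicity of $\sigma$) only guarantees that $(\Galg^{\sigma})^{\circ}$ is reductive; $\Galg^{\sigma}$ itself can be disconnected, as the paper itself illustrates later (in the proof of Theorem~\ref{E6bon} it is noted that $\widetilde{\Galg}^{\sigma}$ has a connected component of index~$2$). The formula in the theorem is stated with $\operatorname{Z}(\Galg^{\sigma})^{\circ F}$ precisely to accommodate this, and your Steinberg-formula identification $|s((\Galg^{\sigma})^F)|=|\operatorname{Z}(\Galg^{\sigma})^{\circ F}|q^l$ is only valid as written when $\Galg^{\sigma}$ is connected. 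In the disconnected case one must work with $(\Galg^{\sigma})^{\circ}$ and keep track of the component group, which is where the hypothesis $H^1(F,\operatorname{Z}(\Galg^{\sigma}))=1$ does real work beyond mere uniqueness of $\Gamma$.
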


\begin{remark}
The character $E(\Gamma_1)$ defined in Equation~(\ref{eq:gelfnc}) is a
Gelfand-Graev character of $\Halg^F$ in the sense of~\cite{Sorlin},
because the linear character $E(\phi_1)$ defined in
Equation~(\ref{eq:extcanonique}) is regular~\cite[D\'efinition
4.1]{Sorlin}). Note that by~\cite[12.2.3]{carter1} the graph
automorphism $\sigma$ that we consider here always satisfies the condition
(RS) defined in~\cite[Notation 2.1]{Sorlin}. 
\label{rk:coherence}
\end{remark}

\subsection{A result of extendibility}\label{Sectionextend} 
Let $n$ be a positive integer. The map $F'=F^n$ is a Frobenius map of
$\Galg$, which gives a rational structure over $\F_{q^n}$. Note that $F$
and $\sigma$ commute with $F'$. Then restrictions of these endomorphisms
to $\Galg^{F'}$ induce automorphisms of $\Galg^{F'}$, denoted by the
same symbol in the following. Note that, viewed as an automorphism of
$\Galg^{F'}$, the automorphism $F$ has order $n$. We write
$A=\cyc{F,\sigma}$ and 
\begin{equation}
\label{eq:norm}
N_{F'/F}:\Ualg_1^{F'}\rightarrow\Ualg_1^{F},\,u\mapsto uF(u)\ldots
F^{n-1}(u),
\end{equation} 
for the norm map of $\Ualg_1$, where $\Ualg_1$ is the group defined
before Equation~(\ref{eq:U1}), and we set
$N_{F'/F}^*:\Irr(\Ualg_1^F)\rightarrow\Irr(\Ualg_1^{F'}),\,\phi\mapsto
\phi\circ N_{F'/F}$. Since $F$ and $\sigma$ commute, we have
\begin{equation}
\sigma\circ N_{F'/F}=N_{F'/F}\circ\sigma.
\label{eq:commuteNsigma}
\end{equation}

\begin{lemma}
If $\phi$ is a $\sigma$-stable character of $\Ualg_1^F$, then the character
$N_{F'/F}^*(\phi)$ is stable under $F$ and $\sigma$. 
\label{lem:sigmaFstable}
\end{lemma}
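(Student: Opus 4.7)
The plan is to verify each stability statement directly from the definition $N_{F'/F}^*(\phi) = \phi \circ N_{F'/F}$, exploiting the fact that $\Ualg_1$ is abelian. Indeed, from the isomorphism $\Ualg_1 \simeq \prod_{\alpha \in \Delta}\Xalg_{\alpha}$ (a direct product of one-dimensional root subgroups, each isomorphic to the additive group of $\overline{\F}_p$), the group $\Ualg_1$ is abelian, hence so is $\Ualg_1^{F'}$. This will allow me to reorder the factors in the definition of $N_{F'/F}$ freely.

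For $F$-stability, I would take $u \in \Ualg_1^{F'}$ and compute
\begin{equation*}
N_{F'/F}(F(u)) \;=\; F(u)\,F^{2}(u)\cdots F^{n-1}(u)\,F^{n}(u).
\end{equation*}
Since $F' = F^n$ and $u \in \Ualg_1^{F'}$, we have $F^{n}(u) = u$. Using commutativity of $\Ualg_1^{F'}$ to move this last factor to the front, the expression becomes $u\, F(u)\cdots F^{n-1}(u) = N_{F'/F}(u)$. Applying $\phi$ gives $N_{F'/F}^{*}(\phi)(F(u)) = N_{F'/F}^{*}(\phi)(u)$, which is $F$-stability. As a side check one sees that $N_{F'/F}$ indeed lands in $\Ualg_1^{F}$, so that $\phi \circ N_{F'/F}$ makes sense.

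For $\sigma$-stability, I would simply invoke Equation~(\ref{eq:commuteNsigma}), which asserts $\sigma \circ N_{F'/F} = N_{F'/F} \circ \sigma$. Therefore for every $u \in \Ualg_1^{F'}$,
\begin{equation*}
({}^{\sigma}N_{F'/F}^{*}(\phi))(u) \;=\; \phi\bigl(N_{F'/F}(\sigma^{-1}(u))\bigr) \;=\; \phi\bigl(\sigma^{-1}(N_{F'/F}(u))\bigr) \;=\; ({}^{\sigma}\phi)(N_{F'/F}(u)),
\end{equation*}
and the $\sigma$-stability of $\phi$ gives $({}^{\sigma}\phi) = \phi$, hence ${}^{\sigma}N_{F'/F}^{*}(\phi) = N_{F'/F}^{*}(\phi)$.

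There is essentially no serious obstacle: the only subtlety is the commutativity argument needed to push $F^n(u)=u$ past the other factors in $N_{F'/F}(F(u))$, and this is free once one recognizes that $\Ualg_1$ is a direct product of additive root groups. Both stability properties are then immediate.
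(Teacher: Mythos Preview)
Your proof is correct and follows essentially the same approach as the paper: both use the commutativity of $\Ualg_1$ for the $F$-stability and Equation~(\ref{eq:commuteNsigma}) for the $\sigma$-stability. The only difference is cosmetic: the paper obtains $F$-stability by citing the general fact (from \cite[\S2.4]{BrHi}) that $N_{F'/F}^*$ is a bijection from $\Irr(\Ualg_1^F)$ onto $\Irr(\Ualg_1^{F'})^F$, whereas you verify the ``into'' direction directly via the cyclic-permutation computation $N_{F'/F}(F(u))=N_{F'/F}(u)$---which is exactly the computation underlying that cited fact.
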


\begin{proof}
Since $\Ualg_1$ is abelian and connected, the map $N_{F'/F}$ is
surjective~\cite[\S2.4]{BrHi}, and
$N_{F'/F}^*$ is a bijection between $\Irr(\Ualg_1^F)$ and
$\Irr(\Ualg_1^{F'})^F$. Moreover, for every
$\phi\in\Irr(\Ualg_1^F)^{\sigma}$, Equation~(\ref{eq:commuteNsigma})
implies that $N_{F'/F}(\phi)$ is $\sigma$-stable, as required.
\end{proof}

\begin{remark}
If $\phi\in\Irr(\Ualg^F)$ is regular and $\sigma$-stable, then the
corresponding character of $\Ualg_1^F$ is $\sigma$-stable. Applying
Lemma~\ref{lem:sigmaFstable} to this character, we obtain a character
of $\Irr(\Ualg_1^{F'})$ stable under $F$ and $\sigma$. Denote by
$\widetilde{\phi}$ the corresponding character of $\Ualg^{F'}$ (with
$\Ualg_0^{F'}$ in its kernel). Then $\widetilde{\phi}$ is a regular
character of $\Ualg^{F'}$ stable under $F$ and $\sigma$. Thus,
$\widetilde{\phi}$ extends to $\Ualg^{F'}\semi\sigma$. Now, 
it follows from Equation~(\ref{eq:extcanonique}) that
$E(\widetilde{\phi})$ is $F$-stable.
\label{rk:st}
\end{remark}

\begin{convention}
The character $\phi_1$ of $\Ualg^{F'}$ used to parametrize the
Gelfand-Graev characters of $\Galg^{F'}$ is chosen to be $\sigma$ and
$F$-stable. This is possible by Remark~\ref{rk:st}.
\label{conv2}
\end{convention}

\begin{proposition}Assume that $\phi_1\in\Irr(\Ualg^{F'})$ is chosen as
in Convention~\ref{conv2}.
Suppose that $\sigma$ is semisimple and that the characteristic $p$ is a
good prime of $(\Galg^{\sigma})^{\circ}$.
If
$H^1(F',\operatorname{Z}(\Galg^{\sigma}))$ is trivial, then the
constituents of $\Psi_1$ are $F$-stable.
\label{extension}
\end{proposition}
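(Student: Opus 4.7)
My plan is to combine $F$-equivariance of the Gelfand-Graev setup on $\Halg^{F'}$ with Sorlin's uniqueness result and a descent argument. By Convention~\ref{conv2}, the character $\phi_1\in\Irr(\Ualg^{F'})$ is both $\sigma$- and $F$-stable; formula (\ref{eq:extcanonique}) then forces the canonical extension $E(\phi_1)$ to be $F$-stable on $\Ualg^{F'}\semi\sigma$, and inducing gives that $E(\Gamma_1)$ is $F$-stable on $\Halg^{F'}$. In particular $\Gamma_1$ is $F$-stable, so $F$ permutes the set $C_1$ of irreducible constituents of $\Gamma_1$ and preserves its $\sigma$-stable subset $C_1^\sigma$.

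Next, under the hypothesis $H^1(F',\operatorname{Z}(\Galg^\sigma))=1$, Theorem~\ref{karine} together with Remark~\ref{rk:coherence} tells me that $E(\Gamma_1)$ is the unique Gelfand-Graev character of $\Halg^{F'}$ in Sorlin's sense. Combining this with Lemma~\ref{gamma1} yields $\cyc{\Psi_1,\Psi_1}_{\Halg^{F'}}=|s(\Galg^{*F'^*})^{\sigma^*}|=|C_1^\sigma|$, so the constituents $\pm D_{\Halg^{F'}}(E(\chi))$ of $\Psi_1$, for $\chi$ running through $C_1^\sigma$, form $|C_1^\sigma|$ pairwise-distinct irreducible characters of $\Halg^{F'}$. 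Because $\chi\mapsto E(\chi)$ is characterized by appearance in the $F$-stable character $E(\Gamma_1)$ with multiplicity one, and $D_{\Halg^{F'}}$ commutes with the automorphism $F$, one has $F(D_{\Halg^{F'}}(E(\chi)))=D_{\Halg^{F'}}(E(F(\chi)))$; the problem therefore reduces to showing that $F$ fixes every $\chi\in C_1^\sigma$ pointwise.

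For that last step, the plan is to exploit the descent built into Convention~\ref{conv2}: by Remark~\ref{rk:st}, $\phi_1\in\Irr(\Ualg^{F'})$ is the image under the norm map $N_{F'/F}^*$ of a regular $\sigma$-stable character of $\Ualg^{F}$. I would use this compatibility, together with a Shintani-type matching and Sorlin's parameterization of regular characters of $\Halg^{F}$ and $\Halg^{F'}$, to produce an $F$-equivariant bijection between $C_1^\sigma$ and a set of $\sigma$-stable regular characters intrinsically attached to $\Galg^{F}$, on which $F$ acts trivially. Comparing cardinalities via Theorem~\ref{karine} and Lemma~\ref{gamma1} on both the $F$- and $F'$-sides should show that this bijection exhausts $C_1^\sigma$, forcing the $F$-action on $C_1^\sigma$ to be trivial, and hence the constituents of $\Psi_1$ to be $F$-stable.

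The main obstacle is carrying out the descent argument rigorously in the disconnected setting of Digne-Michel-Sorlin: tracking the canonical extensions $E(\chi)$ through the norm, handling signs coming from Alvis-Curtis duality, and verifying that the parameterization of regular characters on $\Halg^{F'}$ descends compatibly to $\Halg^{F}$. The hypothesis on $H^1(F',\operatorname{Z}(\Galg^\sigma))$ is crucial at two places: it guarantees uniqueness of the Gelfand-Graev character (so that the construction of $E(\Gamma_1)$ is canonical and thus $F$-stable), and it is what ensures the numerical match that prevents parasitic $F$-orbits of length greater than one from appearing in $C_1^\sigma$.
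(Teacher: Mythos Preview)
Your reduction in the first two paragraphs is correct: since $E(\phi_1)$ is $F$-stable (Remark~\ref{rk:st}), so is $E(\Gamma_1)$; multiplicity-freeness of $E(\Gamma_1)$ then forces ${}^FE(\chi)=E({}^F\chi)$ for every $\chi\in C_1^\sigma$, and compatibility of $D_{\Halg^{F'}}$ with $F$ gives that the constituent of $\Psi_1$ attached to $\chi$ is $F$-stable if and only if $\chi$ itself is.

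The problem is your third paragraph. You then try to show that \emph{every} $\chi\in C_1^\sigma$ is $F$-stable, and this is simply false. In type $E_6$ with $F'=F^n$, Lemma~\ref{gamma1} and Theorem~\ref{karine} give $|C_1^\sigma|=|s(\Galg^{*F'^*})^{\sigma^*}|=p^{4n}$, while the $F$-stable constituents of $\Gamma_1$ are in bijection with $s(\Galg^{*F^*})$, of size roughly $p^6$; for $n>1$ the former vastly exceeds the latter. No Shintani descent can manufacture the cardinality match you need, and the ``numerical match'' you invoke in your last paragraph does not exist.

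What the paper actually proves---and this is exactly what the Remark immediately after the proof says---is the \emph{conditional}: if $\chi\in C_1^\sigma$ is in addition $F$-stable, then every extension of $\chi$ to $\Halg^{F'}$ is $F$-stable. This is also the only form used downstream (in Theorem~\ref{E6bon} one already knows $F^{n/d}\in\Stab_A(\chi)$). For that conditional, your own reduction already finishes the job: ${}^F\chi=\chi$ gives ${}^FE(\chi)=E(\chi)$, and the remaining extensions are $E(\chi)\otimes\varepsilon_i$ with $\varepsilon_i\in\Irr(\cyc\sigma)$, on which $F$ acts trivially since $F(\sigma)=\sigma$.

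The paper's route is quite different: it exploits that under the stated hypotheses $\cal U_\sigma^{F'}$ is a single $\Halg^{F'}$-class (Sorlin), computes $\rho_{s,1,i}(h)=\sigma_0^i\varepsilon_{\Galg}\varepsilon_{\Cen_{\Galg^*}^\circ(s)}$ for $h\in\cal U_\sigma^{F'}$, observes that this value separates the extensions, and uses $F(\cal U_\sigma^{F'})=\cal U_\sigma^{F'}$ to conclude. Your multiplicity-one argument is shorter and needs none of the extra hypotheses; the paper's argument has the advantage of giving the explicit character value on the regular coset, which is of independent interest but not strictly required for the conditional statement.
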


\begin{proof}
Denote by $\cal U_{\sigma}$ the set of regular elements of $\Halg$
which are
$\Galg$-conjugate to an element of the coset $\Ualg\cdot\sigma$.
In~\cite[\S8]{Sorlin}, Sorlin defines a family of subsets $(\cal U_z)_{z\in
H^1(F',\operatorname{Z}(\Galg))}$ of $\cal U_{\sigma}^F$ 
which form a partition of $\cal U_{\sigma}^F$ (see~\cite[8.1]{Sorlin}). 
Furthermore, we define
$$\gamma_{u}(g)=
\left\{
\begin{array}{cl}
|\Galg^{F'}|/|\cal U_{\sigma}^F|&\textrm{if }g\in \cal U_{\sigma}^F\\
0&\textrm{otherwise}
\end{array}
\right..
$$
Remark~\ref{rk:coherence} and the proof of~\cite[Th\'eor\`eme 8.4]{Sorlin} 
imply that
\begin{equation}
\label{eq:gammau}
D_{\Galg^F,\sigma}(\Gamma_{1,\sigma})=\gamma_u,
\end{equation}
because $H^1(F',\operatorname{Z}(\Galg^{\sigma}))$ is trivial.
Recall that the irreducible characters of $\cyc\sigma$ are described as
follows. We fix a primitive $|\cyc{\sigma}|$-complex root of unity
$\sigma_0$, and
recall that the linear characters of $\cyc{\sigma}$ are the
morphisms $\varepsilon_i:\cyc{\sigma}\rightarrow\C^{\times}$ such that
$\varepsilon_i(\sigma)=\sigma_0^i$.
Let $\rho_{s,1}$ be a $\sigma$-stable constituent of
$D_{\Galg^{F'}}(\Gamma_1)$.  
Then the set $\Irr(\Halg^{F'},\rho_{s,1})$
of extensions of $\rho_{s,1}$ to $\Halg^{F'}$
consists of the characters
$$\rho_{s,1,i}=E(\rho_{s,1})\otimes\varepsilon_i\in\Irr(\Halg^{F'}),$$ for
any $i\geq 0$, where $E(\rho_{s,1})$ denotes an extension of
$\rho_{s,1}$ to $\Halg^{F'}$ (such extensions exist
by~\cite[11.22]{Isaacs}).
Now, \cite[Proposition 8.1]{Sorlin} implies that $\cal U_{\sigma}^{F'}$
is an $\Halg^{F'}$-class (because $p$ is good for
$(\Galg^{\sigma})^{\circ}$, the group
$H^1(F',\operatorname{Z}(\Galg^{\sigma}))$ is trivial and
$\sigma$ is semisimple). Hence,
by Lemma~\ref{gamma1} and Equation~(\ref{eq:gammau}), for any
$h\in\cal U_{\sigma}^{F'}$, we have
\begin{eqnarray*}
\rho_{s,1,i}(h)&=&\sigma_0^i\rho_{s,1,i}(h)\\
    &=&\sigma_0^i\cyc{\gamma_u,\rho_{s,1}}_{\Galg^{F'}\cdot\sigma}\\
    &=&\sigma_0^i\,\varepsilon_{\Galg}\,\varepsilon_{\Cen_{\Galg^*}^{\circ}(s)}.
\end{eqnarray*}
In particular, since $\sigma_0$ has order $|\cyc{\sigma}|$, we deduce
that

\begin{equation}
\label{eq:unicite}
\rho_{s,1,i}=\rho_{s,1,j}\quad\Longleftrightarrow\quad \rho_{s,1,i}(h)
=\rho_{s,1,j}(h)\quad
\textrm{for }h\in\cal U_{\sigma}^{F'}. 
\end{equation}

Suppose now that $\rho_{s,1}$ is $F$-stable. Then
${}^F\rho_{s,1,i}=\rho_{s,1,j}$ for some $j\geq 0$, because
$\Irr(\Halg^{F'},\rho_{s,1})$ is $F$-stable, and for
$h\in\cal U_{\sigma}^{F'}$, we have

\begin{eqnarray*}
\rho_{s,1,j}(h)&=&
{}^F\rho_{s,1,i}(h)\\
&=&\rho_{s,1,i}(F(h))\\
&=&\rho_{s,1,i}(h),
\end{eqnarray*}
because $F(h)\in\cal
U_{\sigma}^{F'}$.
Therefore, Equation~(\ref{eq:unicite}) implies that
$^F\rho_{s,1,i}=\rho_{s,1,i}$.
\end{proof}

\begin{remark}
In fact, in the proof of Proposition~\ref{extension} we proved that
every extension to $\Halg^{F'}$ of an $F$- and $\sigma$-stable
constituent of $D_{\Galg^F}(\Gamma_1)$ is $F$-stable.
\end{remark}

\section{Application to finite groups of type $E_6$}\label{section3}
\subsection{Preliminaries}\label{sec:preliminaire}
In this section, $\Galg$ denotes a simple simply-connected group of type
$E_6$ over $\overline{\F}_p$. We suppose that $p$ is a good prime for
$\Galg$ (i.e., $p\neq 2,3$). 
Let $\Talg$ be a maximal torus of $\Galg$ contained in a Borel subgroup
$\Balg$ of $\Galg$. We denote by $\Phi$ the root
system of $\Galg$ relative to $\Talg$, and by $\Phi^+$ and  $\Delta$ the
sets of positive roots and simple roots corresponding to $\Balg$.
For $\alpha\in\Phi$, we write $\Xalg_{\alpha}$ for the corresponding
root subgroup and choose an isomorphism
$x_{\alpha}:\overline{\F}_p\rightarrow \Xalg_{\alpha}$. Since $\Galg$ is
simple, we have
$\Galg=\cyc{x_{\alpha}(u)\,|\,\alpha\in\Phi,\,u\in\overline{\F}_p}$.
For $\alpha\in\Phi$ and $t\in\overline{\F}_p$, we set
$n_{\alpha}(t)=x_{\alpha}(t)x_{-\alpha}(-t^{-1})x_{\alpha}(t)$. Recall
that the Weyl group $W$ of $\Galg$ is generated by the coset
$n_{\alpha}(1)\cdot \Talg$ for all $\alpha\in\Phi$. Moreover,
for $\alpha\in\Phi$ one has
$\alpha^{\vee}(t)=n_{\alpha}(t)n_{\alpha}(1)^{-1}$ for all
$t\in\overline{\F}_p$.
Then, we have $\Talg=\cyc{\alpha^{\vee}(t)\,|\,\alpha\in\Phi,\,
t\in\overline{\F}_p^{\times}}$ and
$\Balg=\cyc{\Talg,\,x_{\alpha}(u),\,\alpha\in\Phi^+,\,u\in\overline{\F}_p}$; 
see~\cite[1.12.1]{sol}.

We write $\Delta=\{\alpha_1,\ldots,\alpha_6\}$ as in~\cite[Planche
V]{Bourbaki456} and denote by $\rho$ the symmetry of $\Delta$ of order
$2$. As in~\S\ref{subsectionSemi}, we define the corresponding 
graph automorphism $\sigma:\Galg\rightarrow\Galg$ 
and a split Frobenius map by setting
$F(x_{\alpha}(u))=x_{\alpha}(u^p)$ for $\alpha\in\Phi$ and
$u\in\overline{\F}_p$, which commute with $\sigma$ and defines
an $\F_p$-structure on $\Galg$. 
Note that $\Talg$ and $\Balg$ are stable under $F$ and $\sigma$.
Moreover, by~\cite[Planche V]{Bourbaki456}, we have
$$\widetilde{\iota}(\omega_{\alpha_1}^{\vee})=\alpha_1^{\vee}(\xi)
\alpha_3^{\vee}(\xi^2)\alpha_5^{\vee}(\xi)\alpha_6^{\vee}(\xi^2),$$
where $\xi\in\overline{\F}_p$ has order $3$  (such an element exists
because $p\neq 3$). Define 
\begin{equation}
\Talg_0=\{\alpha_1^{\vee}(t)
\alpha_3^{\vee}(t^2)\alpha_5^{\vee}(t)\alpha_6^{\vee}(t^2)\,|\,t\in
\overline{\F}_p^{\times}\}.
\label{eq:radE6}
\end{equation}
Then $\Talg_0$ is a subtorus of $\Talg$
which contains
$\operatorname{Z}(\Galg)=\cyc{\widetilde{\iota}(\omega_{\alpha_1}^{\vee})}$,
and is stable under $\sigma$ and $F$.
In the following, we will use this torus for the construction of
$\widetilde{\Galg}$ as in Equation~(\ref{eq:Gtilde}).

Recall that $\Galg^{\sigma}$ is a simple group of type $F_4$
(by~\cite[1.15.2]{sol}) and $\Talg^{\sigma}$ is a maximal $F$-stable
torus of $\Galg^{\sigma}$, contained in the $F$-stable Borel subgroup
$\Balg^{\sigma}$ of $\Galg^{\sigma}$ (see~\cite[4.1.4(c)]{sol}).
In particular,
$\operatorname{Z}(\Galg^{\sigma})$ is trivial (for example, by
Proposition~\ref{DescInvariantAlcove}, 
because $Y(\Talg_{\ad}^{\sigma})/Y(\Talg_{\Sc}^{\sigma})$ is
trivial). 

\begin{lemma}
With the above notation, the group
$(\widetilde{\Galg}^{\sigma})^{\circ}$ is a simple group 
of type $F_4$.
\label{GtildeE6}
\end{lemma}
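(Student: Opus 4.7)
The plan is to show $(\widetilde{\Galg}^\sigma)^\circ = \Galg^\sigma$, which is already known to be simple of type $F_4$ by \cite[1.15.2]{sol}. First, since $\Galg$ is simply-connected and the graph automorphism $\sigma$ is semisimple, Steinberg's connectedness theorem gives that $\Galg^\sigma$ is connected; combined with the $\sigma$-stability of $\Galg \subseteq \widetilde{\Galg}$, this yields the inclusion $\Galg^\sigma \subseteq (\widetilde{\Galg}^\sigma)^\circ$. Equality will then follow from matching dimensions via a Lie algebra computation.

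To that end, note that $\widetilde{\Galg} = (\Talg_0 \times \Galg)/\operatorname{Z}(\Galg)$ is a central isogeny quotient, so on Lie algebras $\operatorname{Lie}(\widetilde{\Galg}) = \operatorname{Lie}(\Talg_0) \oplus \operatorname{Lie}(\Galg)$, and $\sigma$ preserves this decomposition since it individually stabilizes $\Talg_0$ and $\Galg$. Because $p$ is good for $\Galg$ (in particular coprime to the order $2$ of $\sigma$), both fixed-point schemes are smooth, and
\[
\dim (\widetilde{\Galg}^\sigma)^\circ \;=\; \dim \operatorname{Lie}(\Talg_0)^\sigma + \dim \operatorname{Lie}(\Galg)^\sigma \;=\; \dim \operatorname{Lie}(\Talg_0)^\sigma + 52.
\]
Thus the claim reduces to the vanishing $\operatorname{Lie}(\Talg_0)^\sigma = 0$.

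The main obstacle — and the only nontrivial step — is identifying how $\sigma$ acts on the one-dimensional torus $\Talg_0$. Here I use that $\rho$ swaps $\om_{\alpha_1}^\vee$ and $\om_{\alpha_6}^\vee$, while the Bourbaki Planche V data give $\om_{\alpha_1}^\vee + \om_{\alpha_6}^\vee \in Y(\Talg_{\Sc})$; hence $\rho$ acts as inversion on the fundamental group $Y(\Talg_{\ad})/Y(\Talg_{\Sc}) \cong \Z/3\Z$, and consequently $\sigma$ acts by inversion on the subgroup $\operatorname{Z}(\Galg) \subseteq \Talg_0$. Since $\Talg_0$ is one-dimensional, its only algebraic self-automorphisms are $\pm 1$, and the nontrivial restriction to the $3$-torsion subgroup $\operatorname{Z}(\Galg)$ forces $\sigma|_{\Talg_0} = -1$. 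Differentiating and using $p\neq 2$ yields $\operatorname{Lie}(\Talg_0)^\sigma = 0$, so the displayed dimension formula gives $\dim(\widetilde{\Galg}^\sigma)^\circ = 52 = \dim \Galg^\sigma$, and the earlier inclusion becomes an equality. Therefore $(\widetilde{\Galg}^\sigma)^\circ = \Galg^\sigma$ is simple of type $F_4$, as required.
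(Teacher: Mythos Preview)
Your proof is correct and takes a genuinely different route from the paper. You establish the stronger statement $(\widetilde{\Galg}^\sigma)^\circ = \Galg^\sigma$ by combining the inclusion $\Galg^\sigma \subseteq (\widetilde{\Galg}^\sigma)^\circ$ (via Steinberg's connectedness theorem) with a dimension count through the Lie algebra splitting $\operatorname{Lie}(\widetilde{\Galg}) = \operatorname{Lie}(\Talg_0) \oplus \operatorname{Lie}(\Galg)$, using that $\sigma$ must invert the one-dimensional torus $\Talg_0$ because it inverts the order-$3$ subgroup $\operatorname{Z}(\Galg)$. The paper instead works intrinsically with $\widetilde{\Galg}$: it invokes Digne--Michel to see that $(\widetilde{\Galg}^\sigma)^\circ$ has root system of type $F_4$, then shows the maximal torus $(\widetilde{\Talg}^\sigma)^\circ$ has the same dimension as $\Talg^\sigma$ via a rank computation on $(1-\sigma)X(\widetilde{\Talg})$ using the character $\chi_0$ with ${}^\sigma\chi_0 = -\chi_0$ (the dual of your inversion observation), together with a separate argument that $\operatorname{Z}((\widetilde{\Galg}^\sigma)^\circ)$ is connected. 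Your approach is shorter and yields the explicit identification with $\Galg^\sigma$; the paper's approach does not rely on the specific central-product realization of $\widetilde{\Galg}$ and would adapt more readily to other regular embeddings.
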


\begin{proof}
The automorphism $\sigma$ of $\widetilde{\Galg}$ stabilizes
$\widetilde{\Talg}$ and $\widetilde{\Balg}$. Then it is
quasi-semisimple (see~\cite[1.1]{DMnonconnexe}) and 
by~\cite[0.1]{DMpointfixe}, the group $\widetilde{\Galg}^{\sigma}$ is
reductive and the root system of $(\widetilde{\Galg}^{\sigma})^{\circ}$
only depends on $\Phi$ (the root system of $\widetilde{\Galg}$)
and on $\sigma$. Hence, $(\widetilde{\Galg}^{\sigma})^{\circ}$ and
$\Galg^{\sigma}$ have the same type, i.e., an irreducible root system 
of type $F_4$.
Furthermore, by~\cite[1.8]{DMnonconnexe}, 
$\Talg'=(\widetilde{\Talg}^{\sigma})^{\circ}$ is a maximal
torus of $(\widetilde{\Galg}^{\sigma})^{\circ}$.
Now the exact sequence 
\begin{equation}
\label{eq:exactradical}
0\rightarrow (X(\Talg')\cap \Q\Phi)/\Z\Phi\rightarrow
X(\Talg')/\Z\Phi\rightarrow
X(\Talg')/(X(\Talg')\cap
\Q\Phi)
\end{equation}
induces an exact sequence for the $p'$-torsion subgroups of these
groups. Since $X(\Talg')/(X(\Talg')\cap\Q\Phi)$ has
no torsion, we deduce that
\begin{equation}
\left((X(\Talg')\cap \Q\Phi)/\Z\Phi\right)_{p'}\simeq 
\left(X(\Talg')/\Z\Phi\right)_{p'}
\label{eq:ptorsion}
\end{equation}
However, the group $X(\Talg')\cap\Q\Phi$ is a subgroup of the weight
lattice $\Lambda$. So, it follows that
$((X(\Talg')\cap\Q\Phi)/\Z\Phi)_{p'}$ is
a subgroup of $X(\Talg_{\Sc}^{\sigma})/X(\Talg_{\ad}^{\sigma})=\{1\}$
(because $\Galg^{\sigma}$ 
is a simple group of type $F_4$, which implies that its
fundamental group is trivial).  
It follows from Equation~(\ref{eq:ptorsion}) and~\cite[4.1]{BonnafeAn}
that $\operatorname{Z}( (\widetilde{\Galg}^{\sigma})^{\circ})$ is
connected.
Denote by $\chi_0:\widetilde{\Talg}\rightarrow\overline{\F}_p^{\times}$
the character of $\widetilde{\Talg}$ induced by the character
$\Talg_0\rightarrow \overline{\F}_p^{\times},\,t\mapsto t^3$ in $X(\Talg_0)$ 
(the character $\chi_0$
is well-defined because it is trivial on $\operatorname{Z}(\Galg)$).
Note that
$X(\widetilde{\Talg})=\cyc{\alpha,\,\alpha\in\Delta;\,\chi_0}$.
Moreover, ${}^{\sigma}\chi_0=-\chi_0$ implies that
$$\rk_{\Z}\left((1-\sigma)X(\widetilde{\Talg})\right)=\rk_{\Z}\left(
(1-\sigma)X(\Talg) \right)+1.$$ 
Now, the proof of~\cite[1.28]{DMnonconnexe} implies that
\begin{eqnarray*}
\dim(\Talg')&=&\rk_{\Z}\left(X(\widetilde{\Talg})/
(1-\sigma)X(\widetilde{\Talg})\right)\\
&=&\rk_{\Z}\left(X(\widetilde{\Talg})\right)-\rk_{\Z}\left(
(1-\sigma)X(\widetilde{\Talg})\right)\\
&=&\rk_{\Z}(X(\Talg))-\rk_{\Z}\left( (1-\sigma)X(\Talg) \right)\\
&=&\dim(\Talg^{\sigma}).
\end{eqnarray*}
Hence, if $\Phi_{\sigma}$ denotes the root system of $\Galg^{\sigma}$,
we deduce from~\cite[8.1.3]{Springer} that
\begin{eqnarray*}
\dim(
(\widetilde{\Galg}^{\sigma})^{\circ})&=&\dim(\Talg')+|\Phi_{\sigma}|\\
&=&\dim(\Talg^{\sigma})+|\Phi_{\sigma}|\\
&=&\dim( \Galg^{\sigma} ).
\end{eqnarray*}
The result follows.
\end{proof}

Let $n$ be a positive integer, $F'=F^n$ and $A=\cyc{F,\sigma}$ as
in~\S\ref{Sectionextend}. We consider the finite group $\Galg^{F'}$.
\begin{lemma}
The subgroups of $A$ are $\cyc{\sigma^i F^j}$ (for $i\in\{1,3\}$ and
a divisor $j$ of $n$) and $\cyc{\sigma}\times\cyc{F^j}$ (for a divisor
$j$ of $n$).
\label{aut}
\end{lemma}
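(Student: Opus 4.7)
The plan is to observe that $A$ is abelian of order $2n$ and then reduce the classification of its subgroups to the standard one for $\Z/n\Z\times\Z/2\Z$. Since $F$ has order $n$ as an automorphism of $\Galg^{F'}$, $\sigma$ has order $2$ (being the graph automorphism of $E_6$), $F$ and $\sigma$ commute (as noted at the start of~\S\ref{Sectionextend} and already in~\S\ref{subsectionSemi}), and $\cyc{F}\cap\cyc{\sigma}=\{1\}$ (the latter because no nontrivial power of $F$ coincides with the nontrivial graph automorphism $\sigma$, which is visible for instance on the action on $\Delta$), we have
$$A=\cyc{F}\times\cyc{\sigma}\cong\Z/n\Z\times\Z/2\Z.$$

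Given a subgroup $H\leq A$, I would then split according to whether $\sigma\in H$. If $\sigma\in H$, set $K=H\cap\cyc{F}$. The inclusion $\cyc{\sigma}\cdot K\subseteq H$ is immediate, and conversely for $\sigma^aF^b\in H$ one has $F^b=\sigma^{-a}(\sigma^aF^b)\in H\cap\cyc{F}=K$, so $H=\cyc{\sigma}\cdot K$. Since $K$ is a subgroup of the cyclic group $\cyc{F}\cong\Z/n\Z$, there is a unique divisor $j$ of $n$ with $K=\cyc{F^j}$, yielding $H=\cyc{\sigma}\times\cyc{F^j}$, the second family in the statement.

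If instead $\sigma\notin H$, then $H\cap\cyc{\sigma}=\{1\}$ because $\cyc{\sigma}$ has order $2$. The canonical projection $\pi:A\to\cyc{F}$ has kernel $\cyc{\sigma}$ and is therefore injective on $H$, so $H$ is cyclic, isomorphic to $\pi(H)=\cyc{F^j}$ for a unique divisor $j$ of $n$. Lifting a generator of $\pi(H)$ to $H$ produces an element of the form $\sigma^iF^j$, which is then a generator of $H$, giving the first family in the statement.

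There is no genuine obstacle: the proof is a direct case-split on membership of $\sigma$, combined with the classification of subgroups of a cyclic group. The mildly technical point is uniqueness of the divisor $j$ in each family, but this is immediate from the bijection between divisors of $n$ and subgroups of $\cyc{F}$, together with the injectivity of $\pi|_H$ in the cyclic case.
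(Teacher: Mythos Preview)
Your proof is correct. It takes a different route from the paper: you split on whether $\sigma\in H$ and, when $\sigma\notin H$, use the projection $A\to\cyc{F}$ with kernel $\cyc{\sigma}$ to conclude that $H$ is cyclic. The paper instead writes $H=H_2\times H_{2'}$ via the abelian Sylow decomposition (so that $H_{2'}\leq\cyc{F}$ automatically), and then decides whether $H$ is cyclic by counting involutions in $H_2$: the involutions of $A$ are $\sigma$, $F^{n/2}$, $\sigma F^{n/2}$ (the latter two only when $n$ is even), and if $H_2$ contains more than one of them it must contain $\sigma$. Your argument is the more direct of the two and works uniformly without separating the cases $n$ even and $n$ odd; the paper's approach has the virtue of making the dichotomy ``cyclic versus $\cyc{\sigma}\times(\text{cyclic})$'' a structural statement about the $2$-Sylow subgroup.
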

\begin{proof}
Note that the elements of order $2$ of $A$ are $\sigma$, $F^{n/2}$ and
$\sigma F^{n/2}$ if $n$ is even and $\sigma$ otherwise.  Let $H$ be a
subgroup of $A$. Then $H=H_2\times H_{2'}$ with $H_2$ the $2$-Sylow
subgroup of $H$ and $H_{2'}\leq \cyc{F}$. Then $H$ is cyclic if and
only if $H_2$ is cyclic if and only $H_2$ contains a unique element of
order $2$. The result comes from the fact that if $H_2$ contains more
than one element of order $2$, then $\sigma\in H_2$.
\end{proof}

Denote by $\Irr_{l}(\Ualg^{F'})$ the set of linear characters of
$\Ualg^{F'}$ and by $\Irr_s(\Balg^{F})$ the set of irreducible
characters $\chi$ of $\Balg^{F'}$ such that
$\Res_{\Ualg^{F'}}^{\Balg^{F'}}(\chi)$ has constituents in
$\Irr_l(\Ualg^{F'})$. The characters of $\Irr_s(\Balg^{F'})$ can be
described as follows. The $\widetilde{\Talg}^{F'}$-orbits of
$\Irr_l(\Ualg^{F'})$ are parametrized by the subsets of $\Delta$.
For $J\subset \Delta$, we denote by $\omega_J$ the corresponding
$\widetilde{\Talg}^{F'}$-orbit, and write
$\Lalg_J$ for
the standard Levi subgroup (which is $F$-stable) with set of simple roots 
$J$. Note that $\omega_J$ corresponds to the regular characters of
$\Irr(\Ualg_J^{F'})$.
\begin{convention}
Write $A_J=\Stab_A(\omega_J)$. Then by~\cite[Lemma 3.1]{BrHi} and
Remark~\ref{rk:st} there is $\phi_J$ in $\omega_J$ an $A_J$-stable character. 
Moreover, $\Stab_{A}(\phi_J)=A_J$ and if
$\tau\in A$, then $\Stab_{A}({}^{\tau}\phi_J)=\Stab_A(\phi_J)$,
because $A$ is abelian.  Let $\Omega$ be an $A$-orbit of
$\Irr_l(\Ualg^{F'})/\widetilde{\Talg}^{F'}$ and we fix $\omega_J\in
\Omega$.  In the following, we will fix a $A_J$-stable character
$\phi_J\in\omega_J$, and if $J'\subseteq \Delta$ is such
that there is $\tau\in A$ with $\omega_{J'}={}^{\tau}\omega_J$, then
we choose $\phi_{J'}={}^{\tau}\phi_J$ as representative for
$\omega_{J'}$. Note that $\phi_{J'}$ is well-defined (because it does
not depend on the choice of $\tau\in A$ with
${}^{\tau}\omega_J=\omega_{J'}$) and is $A_{J'}$-stable.  This choice
is compatible with Convention~\ref{conv2}.
\label{conv3}
\end{convention}
Now, for $z\in H^1(F',\operatorname{Z}(\Lalg_J))$,
we choose $t_z\in \Talg$ such that $t_z^{-1}F'(t_z)\in z$ and define
\begin{equation}
\phi_{J,z}={}^{t_z}\phi_J.
\label{eq:linparam}
\end{equation}
Then the family $(\phi_{J,z})_{J\subseteq\Delta,\,z\in
H^1(F',\operatorname{Z}(\Lalg_J))}$ is a system of representatives of
the $\Talg^{F'}$-orbits of $\Irr_l(\Ualg^{F'})$.
Moreover, if we write $\Zz_J=\operatorname{Z}(\Lalg_J)$, then for
every $z\in H^1(F',\operatorname{Z}(\Lalg_J))$, we have
$\Stab_{\Talg^{F'}}(\phi_{J,z})=\Zz_J^{F'}$. Now, for $J\subseteq
\Delta$, $z\in H^1(F',\operatorname{Z}(\Lalg_J))$ and
$\psi\in\Irr(\Zz_J^{F'})$, we define

\begin{equation}
\chi_{J,z,\psi}=\Ind_{\Ualg^F\rtimes
\Zz_J^{F'}}^{\Balg^{F'}}(\hat{\phi}_{J,z}\otimes\psi),
\label{eq:defcharB}
\end{equation}
where $\hat{\phi}_{J,z}$ is the extension of $\phi_{J,z}$ to
$\Ualg^{F'}\rtimes \Zz_J^{F'}$ defined by
$\hat{\phi}_{J,z}(ut)=\phi_{J,z}(u)$ for all $u\in \Ualg^{F'}$ and
$t\in \Zz_J^{F'}$.

\begin{lemma}
Assume that Convention~\ref{conv3} holds.
%
For $\tau\in A$, $J\subseteq\Delta$, $z\in H^1(F',\Zz(\Lalg_J))$ and
$\psi\in\Irr(\Zz_J^{F'})$, we have
$${}^{\tau}\chi_{J,z,\psi}=\chi_{\tau(J),\tau(z),{}^{\tau}{\psi}}.$$
\label{imgautoB}
\end{lemma}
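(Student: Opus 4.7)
The plan is to unwind the definition of $\chi_{J,z,\psi}$ and to push the $\tau$-action through induction. Since $\tau\in A=\cyc{F,\sigma}$ and both $F$ and $\sigma$ stabilize the pair $(\Talg,\Balg)$, conjugation by $\tau$ sends $\Balg^{F'}$, $\Ualg^{F'}$ and the standard Levi data to themselves, and in particular $\tau(\Lalg_J)=\Lalg_{\tau(J)}$, hence $\tau(\Zz_J^{F'})=\Zz_{\tau(J)}^{F'}$. Using the compatibility of induction with conjugation by an automorphism,
\begin{equation*}
{}^{\tau}\chi_{J,z,\psi}=\Ind_{\Ualg^{F'}\rtimes \Zz_{\tau(J)}^{F'}}^{\Balg^{F'}}\bigl({}^{\tau}(\hat{\phi}_{J,z}\otimes\psi)\bigr),
\end{equation*}
and a direct check shows that ${}^{\tau}(\hat{\phi}_{J,z}\otimes\psi)=\widehat{{}^{\tau}\phi_{J,z}}\otimes {}^{\tau}\psi$, since the extension from $\Ualg^{F'}$ to $\Ualg^{F'}\rtimes\Zz_J^{F'}$ is trivial on the second factor.

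Next I would identify ${}^{\tau}\phi_{J,z}$ with $\phi_{\tau(J),\tau(z)}$ up to a factor that does not affect the induced character. By Convention~\ref{conv3}, the fixed representative $\phi_J$ of the orbit $\omega_J$ was chosen $A_J$-stable and the representative of $\omega_{\tau(J)}$ is precisely ${}^{\tau}\phi_J$; therefore
\begin{equation*}
{}^{\tau}\phi_{J,z}={}^{\tau}\bigl({}^{t_z}\phi_J\bigr)={}^{\tau(t_z)}({}^{\tau}\phi_J)={}^{\tau(t_z)}\phi_{\tau(J)}.
\end{equation*}
Moreover, since $F$ commutes with every element of $A$, $\tau(t_z)^{-1}F'(\tau(t_z))=\tau\bigl(t_z^{-1}F'(t_z)\bigr)\in\tau(z)$, so $\tau(t_z)$ is a legitimate choice for $t_{\tau(z)}$ in the definition of $\phi_{\tau(J),\tau(z)}$.

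Finally I would remove the dependence on the particular choice of $t_{\tau(z)}$ in the definition of $\chi_{\tau(J),\tau(z),{}^{\tau}\psi}$. Two such choices differ by an element $s\in\Talg^{F'}$, and conjugation by $s\in\Balg^{F'}$ normalizes the subgroup $\Ualg^{F'}\rtimes\Zz_{\tau(J)}^{F'}$ (because $\Talg$ is abelian) and replaces the inducing character $\hat{\phi}_{\tau(J),\tau(z)}\otimes {}^{\tau}\psi$ by its $s$-conjugate; since induced characters are class functions on $\Balg^{F'}$, the resulting character is unchanged. Combining this with the previous paragraph gives
\begin{equation*}
{}^{\tau}\chi_{J,z,\psi}=\Ind_{\Ualg^{F'}\rtimes\Zz_{\tau(J)}^{F'}}^{\Balg^{F'}}\bigl(\hat{\phi}_{\tau(J),\tau(z)}\otimes {}^{\tau}\psi\bigr)=\chi_{\tau(J),\tau(z),{}^{\tau}\psi}.
\end{equation*}

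The only delicate point is the coherence with Convention~\ref{conv3}: one must verify that the convention is set up precisely so that ${}^{\tau}\phi_J=\phi_{\tau(J)}$ for every $\tau\in A$, not merely that the two characters lie in the same $\widetilde{\Talg}^{F'}$-orbit. This is exactly what Convention~\ref{conv3} guarantees (the representative of any orbit in the $A$-orbit of $\omega_J$ is obtained by $\tau$-transport from $\phi_J$, and is well-defined since the starting $\phi_J$ is $A_J$-stable), so the argument goes through without any extra hypothesis.
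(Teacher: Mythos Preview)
Your argument is correct and follows essentially the same route as the paper's proof: push $\tau$ through the induction, identify $\tau(\Zz_J^{F'})=\Zz_{\tau(J)}^{F'}$, and use Convention~\ref{conv3} to match ${}^{\tau}\phi_{J,z}$ with $\phi_{\tau(J),\tau(z)}$ up to $\Talg^{F'}$-conjugacy. The paper's proof is terser and simply invokes that ${}^{\tau}\phi_{J,z}$ is $\Talg^{F'}$-conjugate to $\phi_{\tau(J),\tau(z)}$; you have spelled out why, which is helpful. One small imprecision: two admissible choices of $t_{\tau(z)}$ need not literally differ by an element of $\Talg^{F'}$, but rather by an element of $\Talg^{F'}\cdot\Zz_{\tau(J)}$ (take $c\in\Zz_{\tau(J)}$ with the right Lang image to reduce to equal Lang images, then the remaining factor is in $\Talg^{F'}$); since the $\Zz_{\tau(J)}$-part fixes $\phi_{\tau(J)}$ and commutes with $\Zz_{\tau(J)}^{F'}$, this does not affect your conclusion.
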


\begin{proof}
Using the induction formula~\cite[5.1]{Isaacs}, we have
\begin{equation}
{}^{\tau}\chi_{J,z,\psi}=\Ind_{\Ualg^{F'}\rtimes
\tau(\Zz_{J}^{F'})}^{\Balg^{F'}}
({}^\tau\hat{\phi}_{J,z}\otimes {}^\tau\psi).
\end{equation}
Since $\tau$ and $F'$ commute, we have
$\tau(\Zz_J^{F'})=\tau(\Zz_J)^{F'}=\Zz_{\tau(J)}^{F'}$, because
$\tau(\Lalg_J)=\Lalg_{\tau(J)}$. Moreover,
the choices in Convention~\ref{conv3} imply that
${}^{\tau}\phi_{J,z}$ is $\Talg^{F'}$-conjugate to 
$\phi_{\tau(J),\tau(z)}$,
and the result follows.
\end{proof}

\subsection{Equivariant bijections}
We define $B=D\rtimes A$, where $D$ is the group of outer diagonal
automorphism of $\Galg^{F'}$ induced by the inner automorphisms of
$\widetilde{\Talg}^{F'}/\Talg^{F'}$.
We denote by $\cal O_D$ and $\cal O'_D$ the set of $D$-orbits of
$\Irr_s(\Galg^{F'})$ and $\Irr_s(\Balg^{F'})$. The group $D$ has order
$1$ or $3$. For $i\in\{1,3\}$, we write $\cal O_{D,i}$ and $\cal
O'_{D,i}$ for the subset of elements of $\cal O_D$ and $\cal O'_D$ of
size $i$.  
For $\nu\in\Irr(\Zz(\Galg^{F'})$, we denote by $\Irr(\Galg^{F'}|\nu)$
the set of irreducible characters of $\Galg^{F'}$ lying over
$\nu$, that is $\chi\in\Irr(\Galg^{F'}|\nu)$ if and only if
$\chi|_{\Zz(\Galg^{F'})}=\chi(1)\cdot \nu$.
We recall that $\Zz(\Balg)=\Zz(\Galg)$, and for
$\nu\in\Irr(\Zz(\Galg^{F'}))$ we set
$$\Irr_s(\Galg^{F'}|\nu)=\Irr(\Galg^{F'}|\nu)\cap\Irr_s(\Galg^{F'})\quad\textrm{and}\quad
\Irr_s(\Balg^{F'}|\nu)=\Irr(\Balg^{F'}|\nu)\cap\Irr_s(\Balg^{F'}),$$
and denote by $\cal O_{D,\nu}$ and $\cal O'_{D,\nu}$ the set of
$D$-orbits of $\Irr(\Galg^{F'}|\nu)$ and $\Irr(\Balg^{F'}|\nu)$,
respectively. For $i\in\{1,3\}$, we write $\cal O_{D,\nu,i}$ (resp. $\cal
O'_{D,\nu,i}$) for the subset of elements of $\cal O_{D,\nu}$ (resp. of
$\cal O'_{D,\nu}$) of size $i$.

\begin{remark}
\label{rk:Dorbits}
By Theorem~\ref{decrsemi}(2), every  $D$-orbit of $\Irr_s(\Galg^{F'})$
is the set of constituents of some $\rho_s$ with $s\in\cal S$. We denote
by $\delta_s$ the $D$-orbit corresponding to $s\in \cal S$. Note that
$|\delta_s|=|A_{\Galg^*}(s)^{F'}|$. 
For $J\subseteq \Delta$ and $\psi\in\Irr(\Zz_J^{F'})$, we define
\begin{equation}
\label{eq:paramDorbitB}
\delta_{J,\psi}=\{\chi_{J,z,\psi}\,|\,z\in
H^1(F',\Zz(\Lalg_J))\}.
\end{equation}
Then the $D$-orbits of $\Irr_s(\Balg^{F'})$ are the sets
$\delta_{J,\psi}$ with $J\subseteq\Delta$ and $\psi\in\Irr(\Zz_J^{F'})$.
Moreover, we have $|\delta_{J,\psi}|=|H^1(F',\Zz(\Lalg_J))|$.
\end{remark}

\begin{lemma}Let $\nu\in\Irr(\Zz(\Galg^{F'})$. Write
$A_{\nu}=\Stab_A(\nu)$ and suppose that $\cal O_{D,\nu,k}$ and $\cal
O'_{D,\nu,k}$ for $k\in\{1,3\}$ are $A_{\nu}$-equivalent. Then 
$\Irr_s(\Galg^{F'}|\nu)$ and $\Irr_s(\Balg^{F'}|\nu)$ are
$D\rtimes A_{\nu}$-equivalent.
\label{equiv1}
\end{lemma}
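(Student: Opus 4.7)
The plan is to lift the given $A_{\nu}$-equivariant bijections on $D$-orbits to a $D\rtimes A_{\nu}$-equivariant bijection between the two character sets. Since $|D|\in\{1,3\}$ is a prime, every $D$-orbit on $\Irr_s(\Galg^{F'}|\nu)$ or on $\Irr_s(\Balg^{F'}|\nu)$ has size $1$ or $3$, matching the indexing by $k\in\{1,3\}$.

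For $k=1$, each orbit is a single $D$-fixed character, so the hypothesis immediately furnishes an $A_{\nu}$-equivariant bijection between the corresponding $D$-fixed characters; since $D$ acts trivially on these, this bijection is automatically $D\rtimes A_{\nu}$-equivariant.

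For $k=3$, I would use the $A_{\nu}$-equivariant bijection $\Phi_3\colon\cal O_{D,\nu,3}\to\cal O'_{D,\nu,3}$. Fix a set of representatives of the $A_{\nu}$-orbits on $\cal O_{D,\nu,3}$; for each representative $\delta$ set $\delta'=\Phi_3(\delta)$, so that $\Stab_{A_{\nu}}(\delta)=\Stab_{A_{\nu}}(\delta')=:K$. Since $D$ acts simply transitively on each of $\delta,\delta'$, picking base-points $\chi_0\in\delta$ and $\chi_0'\in\delta'$ yields a $D$-equivariant bijection $d\chi_0\mapsto d\chi_0'$. Promoting this to $D\rtimes K$-equivariance amounts to matching the two $1$-cocycles $c,c'\colon K\to D$ that encode the $K$-actions on $\delta,\delta'$ relative to the chosen base-points. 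Varying $\chi_0'$ within $\delta'$ twists $c'$ by an arbitrary coboundary, so it suffices to verify that $c$ and $c'$ are cohomologous, i.e.\ define the same class in $H^1(K,D)$.

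The main obstacle will be this cohomological compatibility. Using the parametrizations of Theorem~\ref{decrsemi} and Remark~\ref{rk:Dorbits}, together with Lemma~\ref{imgautoB}, the cocycle $c$ on the $\Galg^{F'}$-side is controlled by the $A_{\nu}$-action on $H^1(F',\Zz(\Galg))$ composed with $\hat\omega_s^0$, while $c'$ on the $\Balg^{F'}$-side is controlled by the analogous $A_{\nu}$-action on $H^1(F',\Zz(\Lalg_J))$; the $A_{\nu}$-equivariance of $\Phi_3$ then enforces the agreement of these two prescriptions up to coboundary. The check is streamlined by the observation that $\sigma$ acts on $D\simeq\Z/3\Z$ by inversion, so $H^1(\cyc{\sigma},D)=0$, which reduces the nontrivial verification to the cyclic $F$-part of $K$.

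Finally, once such an equivariant bijection $\delta\to\delta'$ has been chosen on each representative pair, I would extend it to the full $A_{\nu}$-orbit of $\delta$ by $A_{\nu}$-equivariance. Taking the union of all these extensions with the size-$1$ bijection from the case $k=1$ yields the required $D\rtimes A_{\nu}$-equivariant bijection $\Irr_s(\Galg^{F'}|\nu)\to\Irr_s(\Balg^{F'}|\nu)$.
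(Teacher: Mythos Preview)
Your approach is correct in outline but takes a detour that the paper avoids. The paper does not argue cohomologically at all: it observes that for $k=3$ both $\delta_s$ and $\delta_{J,\psi}$ are \emph{canonically} indexed by the same group $H^1(F',\Zz(\Galg))$ (the latter via the isomorphism $h_J^1\colon H^1(F',\Zz(\Galg))\to H^1(F',\Zz(\Lalg_J))$ of~\cite[14.31]{DM}), with $D$ acting by translation and $A_\nu$ acting through its natural action on this $H^1$ on both sides. One then simply sets $\Psi_\nu(\rho_{s,z})=\chi_{J,z,\psi}$ whenever $f_k(\delta_s)=\delta_{J,\psi}$; equivariance under $D\rtimes A_\nu$ is immediate from Theorem~\ref{decrsemi}, Proposition~\ref{perm}, Lemma~\ref{imgautoB} and~\cite[3.6]{BrHi}, with no base-point choices or cocycle comparison needed. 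In effect the paper picks your base-points to be $\rho_{s,1}$ and $\chi_{J,1,\psi}$, for which both cocycles are visibly trivial.

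One point in your write-up deserves correction: the sentence ``the $A_\nu$-equivariance of $\Phi_3$ then enforces the agreement of these two prescriptions up to coboundary'' is not right. Equivariance of $\Phi_3$ only guarantees $\Stab_{A_\nu}(\delta)=\Stab_{A_\nu}(\delta')$; it says nothing about how $K$ acts \emph{inside} $\delta$ versus $\delta'$, which is exactly what the cocycles record. What actually forces the cocycles to agree is the observation you make just before (that both are computed by the same $A_\nu$-action on the $z$-parameter in $H^1(F',\Zz(\Galg))$), together with the $A_\nu$-equivariance of $h_J^1$. So the ingredients you cite are the right ones, but the logical dependence is on the explicit parametrizations, not on $\Phi_3$.
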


\begin{proof}
We choose $A_{\nu}$-equivariant bijections $f_1:\cal
O_{D,\nu,1}\rightarrow \cal O'_{D,\nu,1}$ and $f_3:\cal
O_{D,\nu,3}\rightarrow \cal O'_{D,\nu,3}$.
We define
$\Psi_{\nu}:\Irr_s(\Galg^{F'}|\nu)\rightarrow\Irr_{s}(\Balg^{F'}|\nu)$
as follows. Let $\delta_s\in\cal O_{D,\nu}$. If $\delta_s\in\cal
O_{D,\nu,k}$ for $k\in\{1,3\}$, then by Remark~\ref{rk:Dorbits} there is
$J\subseteq \Delta$ and $\psi\in\Irr(\Zz_J^{F'})$ such that
$f_k(\delta_s)=\delta_{J,\psi}$.  Then we set
$$\Psi_{\nu}(\rho_{s,z})=\chi_{J,z,\psi}.$$ Note that, if
$H^1(F',\Zz(\Lalg_J))$ is not trivial, then $H^1(F',\Zz(\Lalg_J))$ and
$H^1(F',\Zz(\Galg))$ are identified by the map
$h_J^1:H^1(F',\Zz(\Galg))\rightarrow  H^1(F',\Zz(\Lalg_J))$ defined
in~\cite[14.31]{DM} (which is an isomorphism in this case).  Hence, the
map $\Psi_{\nu}$ is well-defined and is an $D\rtimes
A_{\nu}$-equivariant bijection by Theorem~\ref{decrsemi},
Lemma~\ref{imgautoB}, Proposition~\ref{perm} and~\cite[3.6]{BrHi}.
\end{proof}

\begin{theorem}
Suppose that $\Galg$ is a simple simply-connected group of type $E_6$
defined over $\F_q$ with corresponding Frobenius $F'$. We suppose that
$F'$ is split. 
With the above notation, if $\nu\in\Irr(\Zz(\Galg)^{F'})$, then the
sets
$\Irr_s(\Galg^{F'}|\nu)$ and $\Irr_s(\Balg^{F'}|\nu)$ are $D\rtimes
A_{\nu}$-equivalent.
\label{equive6}
\end{theorem}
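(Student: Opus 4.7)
The plan is to invoke Lemma~\ref{equiv1}: it reduces the theorem to constructing, for each $k\in\{1,3\}$, an $A_\nu$-equivariant bijection $f_k:\cal O_{D,\nu,k}\to\cal O'_{D,\nu,k}$. The reduction to these two sizes is forced by the fact that $|\cal A_\Galg|=3$ in type $E_6$ (Table~\ref{tab:DimInvariant}): by Theorem~\ref{classdisc}(3) every component group $A_{\Galg^*}(s)$ is a subgroup of $\Z/3$, and by Proposition~\ref{DescInvariantAlcove} every $\Zz(\Lalg_J)/\Zz(\Lalg_J)^\circ$ embeds in the same group, so $|\delta_s|,|\delta_{J,\psi}|\in\{1,3\}$.

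I would then separate cases according to $q\bmod 3$. Since $F'$ is split, $F'$ acts on $\Zz(\Galg_{\Sc})\simeq\Z/3$ as the $q$-th power map. If $q\equiv -1\bmod 3$, every $F'^*$-fixed component group is trivial on the semisimple side and every $H^1(F',\Zz(\Lalg_J))$ is trivial on the Borel side, so $\cal O_{D,\nu,3}=\cal O'_{D,\nu,3}=\emptyset$; moreover $\Zz(\Galg)^{F'}=\{1\}$ forces $\nu=1$ and $A_\nu=A$, and $f_1$ is built by matching the whole $D$-orbit sets, whose equivariance follows from Proposition~\ref{perm} and Lemma~\ref{imgautoB}.

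The substantial case is $q\equiv 1\bmod 3$, so that $\Zz(\Galg)^{F'}\simeq\Z/3$; since $\sigma$ acts on $\Zz(\Galg)$ by inversion we have $A_\nu=A$ for $\nu=1$ and $A_\nu=\cyc{F}$ otherwise. Proposition~\ref{prop:qmoinsun} applied to $\Galg^*$ yields exactly $|\Theta_{q,\alpha_1}|=q^{\dim V_{\alpha_1}}=q^2$ orbits of size $3$ on the group side, parametrized by the fixed points $\lambda_{\omega,\alpha_1}\in V_{\alpha_1}$, with central character recovered through the surjection $\hat\omega_s^0$ of Theorem~\ref{decrsemi}. On the Borel side, size-$3$ orbits are indexed by pairs $(J,\psi)$ with $\Zz(\Lalg_J)/\Zz(\Lalg_J)^\circ\simeq\Z/3$ and $\psi\in\Irr(\Zz_J^{F'})$; a direct enumeration from the $E_6$ diagram shows the total count is again $q^2$. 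The bijection $f_3$ is then defined by sending the alcove representative of $\delta_s$ to the pair $(J,\psi)$ in which $J=I_\lambda\cap\Delta$ is prescribed by Theorem~\ref{classdisc}(3) and $\psi$ is chosen so that central characters match via Theorem~\ref{decrsemi}(1) and~\cite[14.31]{DM}. The remaining bijection $f_1$ is forced on the complements, whose cardinalities coincide by Steinberg's count $q^{|\Delta|}=q^6$ of Equation~\eqref{eq:steinberg}.

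The main obstacle will be verifying the $A_\nu$-equivariance of $f_3$. Under $F$ and $\sigma$, the alcove parametrization transforms through Equation~\eqref{eq:isocentre} and the diagram symmetry $\rho$, whereas the Borel parametrization transforms through Lemma~\ref{imgautoB} with $\phi_J$ chosen according to Convention~\ref{conv3}. Matching these two $A$-actions on parameters will require carefully tracking the effect of $\sigma$ on $H^1(F',\Zz(\Lalg_J))$, on the map $\hat\omega_s^0$, and on the chosen $\phi_J$'s; once the compatibility is arranged on a system of orbit representatives, it extends automatically to $f_3$.
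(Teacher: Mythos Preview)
Your reduction through Lemma~\ref{equiv1} is exactly what the paper does, and the dichotomy $k\in\{1,3\}$ is correct for the reasons you give. The divergence is in how you propose to produce the $A_\nu$-equivariant bijections $f_k$.

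Your explicit recipe for $f_3$ does not work. You want to send a size-$3$ orbit $\delta_s$, represented by a fixed point $\lambda\in V_{\alpha_1}$ in the alcove, to the pair $(J,\psi)$ with $J=I_\lambda\cap\Delta$. But by~\cite[Lemme~2.16, Table~2.17]{BonLevi} the subsets $J\subseteq\Delta$ with $|H^1(F',\Zz(\Lalg_J))|=3$ are precisely those containing $\{\alpha_1,\alpha_3,\alpha_5,\alpha_6\}$; a generic $\lambda\in V_{\alpha_1}$ has $I_\lambda=\emptyset$, and in any case $I_\lambda$ is a $\cyc{z_{\alpha_1}}$-stable subset of $\widetilde{\Delta}$, which almost never contains that four-element set. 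So your $f_3$ does not even land in $\cal O'_{D,\nu,3}$, and the ``main obstacle'' you flag (equivariance) is not reachable because the map itself is undefined. There is no evident natural map between these two parametrizations; the combinatorics on the two sides are genuinely different (orbits of alcove points versus Levi subgroups with prescribed centre).

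The paper sidesteps this entirely. Rather than writing down $f_k$, it invokes~\cite[13.23]{Isaacs}: two $A_\nu$-sets are equivalent if and only if $|\cal O_{D,\nu,k}^H|=|\cal O'^{H}_{D,\nu,k}|$ for every subgroup $H\leq A_\nu$. The bulk of the proof is then a fixed-point count, subgroup by subgroup (the subgroups of $A$ are listed in Lemma~\ref{aut}). On the group side one relates $|\cal O_{D,\nu,k}^H|$ to semisimple classes in $\Galg^{*\,\tau}$ for suitable Frobenius or twisted-Frobenius $\tau$, using Table~\ref{tab:discocent}, Equation~(\ref{eq:calinter}), and for the subgroups containing $\sigma$ the $F_4$-structure of $\Galg^{\sigma}$ together with Lemma~\ref{gamma1} and Theorem~\ref{karine}. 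On the Borel side one identifies $\cal O'^{H}_{D,\nu,k}$ with characters of the Borel of a smaller group (again of type $E_6$ or $F_4$), via Equation~(\ref{eq:rad}) and the Levi correspondence $J\mapsto\widetilde J$. The equivariant bijection then exists abstractly; it is never made explicit.

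A secondary point: your case split on $q\bmod 3$ is too coarse. When $q\equiv 1\bmod 3$ one must further separate $p\equiv 1$ from $p\equiv -1\bmod 3$, because the action of $F$ (over $\F_p$) on $\Zz(\Galg)$ changes. In particular your claim $A_\nu=\cyc{F}$ for $\nu\neq 1$ holds only in the former case; when $p\equiv -1\bmod 3$ one has $A_\nu=\cyc{\sigma F}$ instead, and the fixed-point counts must be organised accordingly.
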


\begin{proof} Write $q=p^n$, $F$ the split Frobenius map of $\Galg$
over $\F_p$
which stabilizes $\Talg$ and $\Balg$, and $\sigma$ the graph
automorphism of $\Galg$ with respect to $\Talg$ and $\Balg$, as above.
Recall that $F'=F^n$. We will prove that $\Irr_s(\Galg^{F'}|\nu)$
and $\Irr_s(\Balg^{F'}|\nu)$ are $D\rtimes A_{\nu}$-equivalent,
using Lemma~\ref{equiv1}. In order to prove that $\cal O_{D,\nu,k}$ and $\cal
O'_{D,\nu,k}$ for $k\in\{1,3\}$ are $A_{\nu}$-equivalent, we
use~\cite[13.23]{Isaacs}.

First, we suppose that $\Zz(\Galg)^{F}=1$ and
$\Zz(\Galg)^{F'}=\Zz(\Galg)$, that is $p\not\equiv 1\mod 3$ and $q\equiv
1\mod 3$. We write
$\Irr(\Zz(\Galg)^{F'})=\{1_Z,\varepsilon,\varepsilon^2\}$.
Note that $n$ is even. Moreover, we have
$$A_{1_Z}=A\quad\textrm{and}\quad
A_{\varepsilon}=A_{\varepsilon^2}=\cyc{\sigma F}.$$
Suppose that $k=3$ and $\nu=1_Z$.  
Let $H\leq A$ (the 
subgroups of $A$ are described in Lemma~\ref{aut}). 
By Lemma~\cite[5.7]{BrHi} and Equation~(\ref{eq:permseries}) (which is
valid for any element of $A$), 
we deduce that $|\cal O_{D,1_{Z},3}^H|$ is equal to the number of
$H^*$-stable classes of $s(\Galg^*)$ with disconnected centralizer, where 
$H^*$ denotes the subgroup of automorphisms of $\Galg^*$ induced by
elements of $H$. 
If $H=\cyc{\sigma}\times \cyc{F^j}$ or $H=\cyc{\sigma^iF^j}$, then we
write $d=\ord(F^j)$.
We claim that $|\cal O_{D,1_Z,3}^H|=p^{2n/d}$. Indeed,
Theorem~\ref{classdisc}(3) implies that every semisimple class of
$\Galg$ with disconnected centralizer is $\sigma$-stable, and we conclude
with Table~\ref{tab:discocent}.

Now, in the proof of~\cite[5.9]{BrHi}, it is shown that 
$|\delta'_{J,\psi}|=3$, if and only if
$\Zz_J^{F'}=\Talg_J^{F'}\times H_J^{F'}$, where
$\Talg_J$ is a torus of rank $|\Delta|-|J|$ and $H_J^{F'}$ is isomorphic to
$H^1(F',\Zz(\Lalg_J))$. Moreover, the elements of
$\delta'_{J,\psi\otimes\varepsilon^m}$
lie over $\varepsilon^m$. 
%
So, $\cal
O'_{D,\varepsilon^m,3}$ consists of the orbits
$\delta'_{J,\psi\otimes\varepsilon^m}$ with $|H^1(F',\Zz(\Lalg_J))|=3$. 
By~\cite[Lemme 2.16, Table 2.17]{BonLevi}, a subset $J\subseteq
\Delta$ parametrizes such an orbit if and only if it contains 
$\{\alpha_1,\alpha_3,\alpha_5,\alpha_6\}$. 
Furthermore, by~\cite[Lemma 4.4.7]{sol}, the group
$\Galg^{\sigma}$ is a simple group of type $F_4$ with root system
$\Phi_{\sigma}=\{\widetilde{\alpha}\,|\,\alpha\in\Phi\}$
where $\widetilde{\alpha}=\frac{1}{2}(\alpha+\rho(\alpha))$,
and the set of simple roots
of $\Galg^{\sigma}$ with respect to $\Talg^{\sigma}$ and
$\Balg^{\sigma}$ is
$\Delta_{\sigma}=\{\widetilde{\alpha}_1,\ldots,\widetilde{\alpha}_4\}$.
Note that the labelling is as in~\cite[Planche VIII]{Bourbaki456}. In particular,
$\widetilde{\alpha}_1=\alpha_2$ and $\widetilde{\alpha}_2=\alpha_4$ are
the long roots of $\Delta_{\sigma}$. Moreover, the root subgroup
 corresponding to
$\widetilde{\alpha}\in\Phi_{\sigma}$ is
$\widetilde{\Xalg}_{\widetilde{\alpha}}^{\sigma}$, where
$\widetilde{\Xalg}_{\widetilde{\alpha}}=\Xalg_{\alpha}$ if
$\alpha=\rho(\alpha)$ and 
$\widetilde{\Xalg}_{\widetilde{\alpha}}=\Xalg_{\alpha}\cdot\Xalg_{\rho(\alpha)}$ if
$\alpha\neq\rho(\alpha)$; see the proof of~\cite[Lemma 4.4.7]{sol}.
We associate to $J\subseteq\Delta$ the subset
$\widetilde{J}\subseteq\Delta_{\sigma}$ such that, if a
$\sigma$-orbit of $\Delta$ lies in $J$, the corresponding root of
$\Delta_{\sigma}$ lies in $\widetilde{J}$. Write
$\Phi_{\widetilde{J}}=\Phi_{\sigma}\cap\Z\widetilde{J}$.
Then we have
$$\Lalg_J=\cyc{\Talg,\widetilde{\Xalg}_{
\widetilde{\alpha}},\,\widetilde{\alpha}\in
\Phi_{\widetilde{J}}}\quad\textrm{and}\quad
\Lalg_{\widetilde{J}}=\cyc{\Talg^{\sigma},
\widetilde{\Xalg}^{\sigma}_{\widetilde{\alpha}},
\,\widetilde{\alpha}\in\Phi_{\widetilde{J}}},$$
where $\Lalg_{\widetilde{J}}$ is the standard Levi
subgroup of  $\Galg^{\sigma}$ with respect to $\Talg^{\sigma}$
corresponding to $\widetilde{J}$, because $\Talg^{\sigma}$ is connected.
Note that 
$\Lalg_J^{\sigma}=\Lalg_{\widetilde{J}}$ and
$D(\Lalg_J^{\sigma})=D(\Lalg_{J})^{\sigma}$, where $D(\Lalg_J)$ denotes
the derived subgroup of $\Lalg_J$.
Furthermore, we have $\Talg_J=\rad(\Lalg_J)$, which implies that $\Talg_J$
is $\sigma$-stable. 
Since $\Lalg_J^{\sigma}$ is connected
(as a Levi subgroup), we deduce from~\cite[1.31]{DMnonconnexe}
and~\cite[2.2.1]{Springer} that
$\Lalg_J^{\sigma}=\Talg_J^{\sigma}\,D(\Lalg_J)^{\sigma}$. This
product is direct, because $\sigma$ fixes no non-trivial element of the center of
$D(\Lalg_J)$. It follows that $\Talg_J^{\sigma}$ is connected (as
group isomorphic to the connected quotient
$\Lalg_J^{\sigma}/D(\Lalg_J)^{\sigma}$). In particular,
$\Talg_J^{\sigma}$ is a subgroup of $\rad(\Lalg_J^{\sigma})$.
Moreover, since $D(\Lalg_J)^{\sigma}=D(\Lalg_J^{\sigma})$, we deduce
from~\cite[2.3.3]{Springer} that 
$\dim(\Talg_J^{\sigma})
=\dim(\rad(\Lalg_J^{\sigma}))$,  
and~\cite[1.8.2]{Springer} implies that
\begin{equation}
\label{eq:rad}
\Talg_{J}^{\sigma}=\rad(\Lalg_J^{\sigma}).
\end{equation}
Let $\delta'_{J,\psi\otimes 1_Z}\in \cal O'^{\cyc{\sigma}
\times\cyc{F^j}}_{D,1_Z,3}$. Then $J$ is $\sigma$-stable and contains
$\{\alpha_1,\alpha_3,\alpha_5,\alpha_6\}$, and
${}^{\sigma}\psi=\psi$. Moreover, we have
$(\Talg_J^{F'})^{\cyc{\sigma,F^j}}=(\Talg_{J}^{\sigma})^{F^j}$ and
Equation~(\ref{eq:rad}) implies that the set $\cal O'^{\cyc{\sigma}
\times\cyc{F^j}}_{D,1_Z,3}$ is in bijection with the set of
characters
$\widetilde{\chi}_{\widetilde{J},\widetilde{\psi}}$ of 
$\Irr_s((\Balg^{\sigma})^{F^j})$ 
such that $\widetilde{J}$
contains $\{\widetilde{\alpha}_3,\widetilde{\alpha}_4\}$.
Therefore,
\cite[Lemma
5.4]{Br8} implies that 
\begin{equation}
\label{eq:int1}
\left|O'^{\cyc{\sigma}
\times\cyc{F^j}}_{D,1_Z,3}\right|=p^{2{n/d}}.
\end{equation}
Similarly, since
$(\sigma^i
F^j)^n=F'$ (because $n$ is even), for a $\sigma^i$-stable
$J\subseteq\Delta$, we have $(\Talg_J^{F'})^{\sigma^i
F^j}=\Talg_J^{\sigma^i F^j}$, and we deduce that
 $\cal O'^{\cyc{\sigma^i F^j}}_{D,1_Z,3}$
is in bijection with the $D$-orbits of size $3$ of
$\Irr_s(\Balg^{\sigma^i
F^j},1_Z)$. As above,~\cite[2.17]{BonLevi}
and~\cite[Lemma 5.4]{Br8} implies that $|\cal
O'^{\cyc{\sigma^i F^j}}_{D,1_Z,3}|=p^{2n/d}$. 
This discussion proves that, if $\nu=1_Z$, then for every $H\leq A$ we
have
\begin{equation}
\label{eq:int2}
|\cal O_{D,1_Z,3}^H|=|\cal O'^H_{D,1_Z,3}|.
\end{equation}
Now, if $\nu=\varepsilon^m$ with $m=\pm 1$, 
then $A_{\nu}=\cyc{\sigma F}$ and for every $H=\cyc{\sigma^i F^{n/d}}\leq
A_{\nu}$, the same
argument shows that 
\begin{equation}
\label{eq:int3}
|\cal O_{D,\varepsilon^m,3}^H|=p^{2n/d}=|
\cal O'^H_{D,\varepsilon^m,3}|.
\end{equation}
So, this proves that for every $\nu\in\Irr(\Zz(\Galg)^{F'})$, the sets
$\cal O_{D,\nu,3}$ and $\cal O'_{D,\nu,3}$ are $A_{\nu}$-equivalent. 

Suppose now that $k=1$. Note that $s(\Galg^{*F'^*})$ and $\cal O_{D}$
are $A$-equivalent. Let $d$ be a divisor of $n$. Then, we have
$|s(\Galg^{*F'^*})^{\cyc{\sigma^{*i}F^{*n/d}}}|=|s(\Galg^{*F^{*n/d}})|$,
because the set of representatives $\cal T$ of $F'$-stable semisimple
classes of $\Galg^*$ can be chosen such that if the class of $t\in\cal
T$ is $\sigma^{*i} F^{*n/d}$-stable, then $\sigma^{*i} F^{*n/d}(t)=t$. We then
conclude using the fact that a power of $\sigma^{*i} F^{*n/d}$ equals $F'$
and with Equation~(\ref{eq:paramclass}). So, by~\cite[1.1]{Br8}, we
deduce that 
\begin{equation}
\label{eq:calinter}
\left|s(\Galg^{*F'^*})^{\cyc{\sigma^{*i} F^{*n/d}}}\right|=\left\{
\begin{array}{ll}
p^{n|\Delta|/d}+2p^{2n/d}&\textrm{if } \Zz(\Galg)^{\sigma^i
F^{n/d}}=\Zz(\Galg)\\
p^{n|\Delta|/d}&\textrm{otherwise}
\end{array}\right..
\end{equation}
Furthermore, if $t\in\cal T$ is chosen $F^{*n/d}$-stable 
when the
class of $t$ in $\Galg^*$ is $F^{*n/d}$-stable,
then we have $|
s(\Galg^{*F'^*})^{\cyc{\sigma^*}\times\cyc{F^{*n/d}}}|=
|s(\Galg^{*F^{n/d}})^{\sigma^*})|$.
Thanks to Theorem~\ref{karine}, we deduce that
\begin{equation}
\left|s(\Galg^{*F'^*})^{\cyc{\sigma^*}\times\cyc{F^{*n/d}}}\right|
=p^{n|\Delta_{\sigma}|/d}.
\label{eq:calinter2}
\end{equation}
Now, using the case $k=3$, the fact that $\cal O_D=\cal O_{D,1}\sqcup
\cal O_{D,3}$ and Equations~(\ref{eq:calinter}) and~(\ref{eq:calinter2}),
we deduce that
\begin{equation}
\left|\cal
O_{D,1}^{\cyc{\sigma^{*i}F^{*n/d}}}\right|=p^{n|\Delta|/d}-p^{2n/d}\quad\textrm{and}
\quad
\left|\cal
O_{D,1}^{\cyc{\sigma^{*}}\times\cyc{F^{*n/d}}}\right|=
p^{n|\Delta_{\sigma}|/d}-p^{2n/d}.
\label{eq:calinter3}
\end{equation}
 
Moreover, note that the argument at the beginning of the proof shows
that the set $\cal O'^{\cyc{\sigma}\times\cyc{F^{n/d}}}_D$ is in
bijection with $\Irr_s((\Balg^{\sigma})^{F^{n/d}})$, which has
$p^{d|\Delta_{\sigma}|}$ elements by~\cite[Proposition 3]{Br6}. 
The set $\cal O'^{\cyc{\sigma^i F^{n/d}}}_D$ is in bijection with
$\Irr_s(\Balg^{\sigma^i F^{n/d}})$, which has
$p^{n|\Delta|/d}+2p^{2n/d}$
elements if $\sigma^i F^{n/d}$ acts trivially on $\Zz(\Galg)$, and
$p^{d|\Delta|}$ elements otherwise. Since $\cal O'_D=\cal
O'_{D,1}\sqcup\cal O'_{D,3}$, we deduce from Equations~(\ref{eq:int1}),
(\ref{eq:int2}) and~(\ref{eq:int3}) that
%
\begin{equation}
\left|\cal
O'^{\cyc{\sigma^{i}F^{n/d}}}_{D,1}\right|=p^{n|\Delta|/d}-p^{2n/d}\quad
\textrm{and}
\quad
\left|\cal
O'^{\cyc{\sigma^{}}\times\cyc{F^{n/d}}}_{D,1}\right|=
p^{n|\Delta_{\sigma}|/d}-p^{2n/d}.
\label{eq:int4}
\end{equation}
Equations~(\ref{eq:calinter3}) and~(\ref{eq:int4}) prove that the sets
$\cal O_{D,1}$ and $\cal O'_{D,1}$ are $A$-equivalent.  So,
by~\cite[Theorem 1.1]{Br8}, the sets $\Irr_s(\Galg^{F'},\nu)$ and
$\Irr_s(\Balg^{F'},\nu)$ are in bijection. Since $\cal O_{D,\nu,3}$ and
$\cal O'_{D,\nu,3}$ are in bijection by Equations~(\ref{eq:int2})
and~(\ref{eq:int3}), we deduce that $\cal O_{D,\nu,1}$ and $\cal
O'_{D,\nu,1}$ have the same cardinal.  Moreover, if $H$ is not a
subgroup of $\cyc{\sigma F}$, then every $H$-stable character of
$\Irr_s(\Galg^{F'})$ (resp. $\Irr_s(\Balg^{F'})$) lies over $1_Z$. Now,
let $H=\cyc{\sigma^i F^{n/d}}$ be a subgroup of $\cyc{\sigma F}$. We
consider the norm map $N_{F'/\sigma^i F^{n/d}}:\Zz(\Galg)\rightarrow
\Zz(\Galg)$, which is well-defined because $(\sigma^iF^{n/d})^n=F'^n$.
By~\cite[Lemma 5.8, Lemma 5.9]{BrHi}, if $N_{F'/\sigma^iF^{n/d}}$ is
surjective, then every set $\cal O'_{D,\nu,1}$ and $\cal O'_{D,\nu,1}$
contains $\frac{1}{3}(p^{n|\Delta|/d}-p^{2n/d})$ characters invariant
under $\sigma^iF^{n/d}$.
Otherwise (i.e., when $N_{F'/\sigma^iF^{n/d}}$ is trivial), the
$\sigma^iF^{n/d}$-stable characters of $\Irr_s(\Galg^{F'})$ and
$\Irr_s(\Balg^{F'})$ lie over $1_Z$. So, this proves that, for every
subgroup $H$ of $A_{\nu}$, we have
$$|\cal O^H_{D,\nu,1}|=|\cal O'^H_{D,\nu,1}|.$$
Therefore, the sets $\cal O_{D,\nu,1}$ and $\cal O'_{D,\nu,1}$ are
$A_{\nu}$-equivalent, as required.

Now, we suppose that $\Zz(\Galg)^F$ and $\Zz(\Galg)^{F'}$ are trivial,
that is, $p\not\equiv 1\mod 3$ and $q\not\equiv 1\mod 3$.
Then $D$ is trivial and we conclude using the argument of
Equations~(\ref{eq:calinter}) and~(\ref{eq:calinter2}), and the
discussion following Equation~(\ref{eq:calinter3}).

Finally, suppose that $\Zz(\Galg)^F=\Zz(\Galg)$, i.e. $p\equiv q\equiv
1\mod 3$. Then
$\Zz(\Galg)^{F'}=\Zz(\Galg)$. If $n$ is odd, we remark that if $X$ is a
$\cyc{\sigma F^{n/d}}$-set and an $\cyc{F'}$-set, and if $x$ is fixed by
$\sigma F^{n/d}(x)$ and $F'$, then $F^{n/d}(x)=x$ and $\sigma(x)=x$. In
particular, we can compare $|\Irr_{s}(\Galg^{F'})^{\sigma F^{n/d}}|$
with the number of semisimple classes of $\Galg^{F^{n/d}}$ fixed by
$\sigma$ (there are $p^{n|\Delta_{\sigma}|/d}$ such classes by
Lemma~\ref{gamma1} and Theorem~\ref{karine}), and
$|\Irr_{s}(\Balg^{F'})^{\cyc{\sigma F^{n/d}}}|$ with $|\Irr_s(
(\Balg^{\sigma})^{F^d})|=p^{n|\Delta_{\sigma}|/d}$. If $n$ is even, then
$(\sigma^iF^j)=F'$ and we are in the same situation as the first case of
the proof. Using the same argument as above we deduce the numbers of
Table~\ref{tab:preuve}. We set
$a_{F^{n/d}}= b_{F^{n/d}}=p^{n|\Delta|/d}-p^{2n/d}$ when the norm map
$N_{F'/F^{n/d}}$ is surjective, and $a_{F^{n/d}}=
p^{n|\Delta|/d}-3p^{2n/d}$ and $b_{F^{n/d}}=0$ otherwise.
\begin{table}
$$
\renewcommand{\arraystretch}{1.6}
\begin{array}{c|c|c|c|c}
H&n&\cyc{\sigma}\times\cyc{F^{n/d}} &\cyc{F^{n/d}} &\cyc{\sigma F^{n/d}} \\
\hline
|\cal O^H_{D,1_Z,3}|&&p^{2n/d}&p^{2n/d}&p^{2n/d}\\
|\cal O^H_{D,\varepsilon,3}|,\mbox{\scriptsize $\varepsilon\neq 1_Z$}&
&0&p^{2n/d}&0\\
|\cal O^H_{D,1_Z,1}|&\textrm{odd}&p^{n|\Delta_{\sigma}|/d}
-p^{2n/d}&a_{F^{n/d}}
&p^{n|\Delta_{\sigma}|/d}-p^{2n/d}\\
|\cal O^H_{D,1_Z,1}|&\textrm{even}&p^{n|\Delta_{\sigma}|/d} -p^{2n/d}
&a_{F^{n/d}}
&p^{n|\Delta|/d}-p^{2n/d}\\
|\cal O^H_{D,\varepsilon,1}|,\mbox{\scriptsize $\varepsilon\neq 1_Z$}
&&0&
b_{F^{n/d}}
&0\\
\hline
|\cal O'^H_{D,1_Z,3}|&&p^{2n/d}&p^{2n/d}&p^{2n/d}\\
|\cal O'^H_{D,\varepsilon,3}|,\mbox{\scriptsize $\varepsilon\neq 1_Z$}&
&0&p^{2n/d}&0\\
|\cal O'^H_{D,1_Z,1}|&\textrm{odd}&p^{n|\Delta_{\sigma}|/d}
-p^{2n/d}&
a_{F^{n/d}}
&p^{n|\Delta_{\sigma}|/d}-p^{2n/d}\\
|\cal O'^H_{D,1_Z,1}|&\textrm{even}&p^{n|\Delta_{\sigma}|/d}
-p^{2n/d}&
a_{F^{n/d}}
&p^{n|\Delta|/d}-p^{2n/d}\\
|\cal O'^H_{D,\varepsilon,1}|,\mbox{\scriptsize $\varepsilon\neq 1_Z$}
&&0&
b_{F^{n/d}}
&0\\
\end{array}$$
\caption{Case when $\Zz(\Galg)^F=\Zz(\Galg)$}
\label{tab:preuve}
\end{table}
It follows that $\cal O_{D,\nu,k}$ and $\cal O'_{D,\nu,k}$ are
$A_{\nu}$-equivalent. This proves the claim.
\end{proof}

\subsection{Inductive McKay condition}

\begin{lemma}
Let $H=GC$ be a finite central product with $Z=G\cap C$. Suppose that
$C$ is abelian and that
$\tau\in\operatorname{Aut}(H)$ acts on $G$ and $C$. For
$\chi\in\Irr(G)^{\tau}$, write $\nu=\Res_Z^G(\chi)$. If $\nu$ extends to
a $\tau$-stable character of $C$, then we have
$$|\Irr(H|\chi)^{\tau}|=|(C/Z)^{\tau}|.$$
\label{lemme:cp}

\end{lemma}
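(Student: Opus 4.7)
The plan is to reduce to a counting problem over the abelian group $C/Z$ via Clifford theory adapted to central products. Since $H=GC$ with $[G,C]=1$ and $G\cap C=Z$, one has $H\cong (G\times C)/N$ with $N=\{(z,z^{-1})\,|\,z\in Z\}$. Because $C$ is abelian, the irreducible characters of $G\times C$ lying over $\chi$ are exactly the products $\chi\otimes\lambda$ with $\lambda\in\Irr(C)$; such a character factors through $H$ if and only if $\chi(z)\lambda(z^{-1})=\chi(1)$ for all $z\in Z$, i.e., $\lambda|_Z=\nu$. Distinct such $\lambda$ give distinct characters of $H$ (evaluate at $g=1$ to recover $\lambda$). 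Denoting the resulting character by $\psi_\lambda\in\Irr(H)$, this gives a bijection
$$\Irr(H\,|\,\chi)\;\longleftrightarrow\;\{\lambda\in\Irr(C)\,|\,\lambda|_Z=\nu\}.$$

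Next I transport the $\tau$-action through this bijection. Since $\tau$ preserves both $G$ and $C$, it preserves $Z$, and $\tau(\psi_\lambda)=\psi_{\tau\lambda}$ (because $\tau\chi=\chi$). By injectivity of $\lambda\mapsto\psi_\lambda$, a character $\psi_\lambda$ is $\tau$-stable if and only if $\lambda$ is $\tau$-stable.

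By hypothesis there is a $\tau$-stable extension $\tilde\nu\in\Irr(C)$ of $\nu$. Any other extension is of the form $\tilde\nu\cdot\mu$ for a unique $\mu\in\Irr(C/Z)$, and since $\tau\tilde\nu=\tilde\nu$, the character $\tilde\nu\cdot\mu$ is $\tau$-stable if and only if $\mu\in\Irr(C/Z)^{\tau}$. Combining with the previous step,
$$|\Irr(H\,|\,\chi)^{\tau}|=|\Irr(C/Z)^{\tau}|.$$

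Finally, I apply Brauer's permutation lemma to the action of $\cyc{\tau}$ on the finite abelian group $C/Z$: the number of $\tau$-fixed irreducible characters equals the number of $\tau$-fixed elements, whence $|\Irr(C/Z)^{\tau}|=|(C/Z)^{\tau}|$, giving the claim. The only delicate point is identifying $\Irr(H\,|\,\chi)$ correctly as a torsor under $\Irr(C/Z)$; after that the counting is immediate and the existence of a $\tau$-stable $\tilde\nu$ is precisely what makes the torsor identification $\tau$-equivariant.
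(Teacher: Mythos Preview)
Your proof is correct and follows essentially the same route as the paper's: both identify $\Irr(H\,|\,\chi)$ with the set of extensions of the central character $\nu$ to $C$ (the paper via the dot-product decomposition $\chi\cprod\theta$, you via the presentation $H\cong(G\times C)/N$), both use the given $\tau$-stable extension $\tilde\nu$ to turn this set into a $\tau$-equivariant torsor under $\Irr(C/Z)$, and both finish with Brauer's permutation lemma (\cite[6.32]{Isaacs}) to pass from $|\Irr(C/Z)^{\tau}|$ to $|(C/Z)^{\tau}|$.
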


\begin{proof}
Recall that, for $\theta\in\Irr(H)$, if we write
$\gamma=\Res_{Z}^H(\theta)$, then there are 
unique $\theta_1\in \Irr(G|\gamma)$ and $\theta_2\in\Irr(C|\gamma)$ such
that $\theta=\theta_1\cprod\theta_2$, where $\cprod$ is the dot product,
defined by $\theta_1\cprod\theta_2(g_1g_2)= \theta_1(g_1)\theta_2(g_2)$ for all
$g_1\in G$ and $g_2\in C$.
Let $\chi\in\Irr(G)^{\tau}$. Write $\nu=\Res_{Z}^H(\chi)$. Then we have
$$\Irr(H|\chi)=\{\chi\cprod \theta\,|\,\theta\in\Irr(C|\nu)\}.$$
Since $\chi$ is $\tau$-stable, so is $\nu$ and $\Irr(C|\nu)$ is
$\tau$-stable. Furthermore, $\chi\cprod\theta$ is $\tau$-stable if and
only if $\theta$ is $\tau$-stable. In particular, the set
$\Irr(H|\chi)^{\tau}$ is parametrized by $\Irr(C|\nu)^{\tau}$.
Moreover, $C$ is abelian, then $\nu$ extends to a linear character of $C$
(denoted by the same symbol)~\cite[5.4]{Isaacs} and by assumption, $\nu$ can
be supposed to be $\tau$-stable. It follows that the map
$g_{\nu}:\Irr(C|1_Z)\rightarrow \Irr(C|\nu),\,\theta\mapsto\theta\nu$ is a
bijection such that $\Irr(C|\nu)^{\tau}=g_{\nu}(\Irr(C|1_Z)^{\tau})$
(because if $\nu$ is $\tau$-stable, so is $\nu^{-1}$).
But we can identify $\Irr(C|1_Z)$ with $\Irr(C/Z)$ and the action of
$\tau$ on these sets is compatible. Hence, we have
$|\Irr(C|1_Z)^{\tau}|=|\Irr(C/Z)^{\tau}|$ and the result follows
from~\cite[6.32]{Isaacs}.
\end{proof}
We recall that if $N$ is a normal subgroup of $G$, then we can associate
to every $G$-invariant irreducible character of $N$ an element
$[\chi]_{G/N}$ of the second cohomology group $H^2(G/N,\C^{\times})$ of
$G/N$. 
\begin{theorem}Let $p>3$ be a prime number and $q$ a $p$-power.
Then the finite simple group $E_6(q)$ is ``good'' for the prime $p$.
\label{E6bon}
\end{theorem}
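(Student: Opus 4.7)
The strategy is to verify the inductive McKay condition of~\cite{IMN} at the prime $p$ for $S = \Galg^{F'}/\Zz(\Galg^{F'}) = E_6(q)$, with $\Galg$ simple simply-connected of type $E_6$ and $p>3$ good. Since $p$ is good, the Sylow $p$-subgroup of $\Galg^{F'}$ is $\Ualg^{F'}$, its normalizer is $\Balg^{F'}$, and $\Irr_{p'}(\Galg^{F'}) = \Irr_s(\Galg^{F'})$. The outer automorphism group of $S$ is generated by the group $D$ of diagonal automorphisms, the graph automorphism $\sigma$, and the field automorphism $F$, so the task reduces to producing a suitable $D\rtimes\langle\sigma,F\rangle$-equivariant correspondence between $\Irr_{p'}(\Galg^{F'})$ and $\Irr_{p'}(\Balg^{F'})$ which carries the expected extendibility and cohomological data.

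The character-level bijection is already at hand. For each $\nu\in\Irr(\Zz(\Galg)^{F'})$, Theorem~\ref{equive6} supplies a $D\rtimes A_\nu$-equivariant bijection $\Psi_\nu:\Irr_s(\Galg^{F'}|\nu)\to\Irr_s(\Balg^{F'}|\nu)$ preserving central characters. Assembling over $\nu$ yields a global $D\rtimes\langle\sigma,F\rangle$-equivariant bijection $\Psi$. Lemma~\ref{lemme:cp} then transports this through the central product structure relating $\Galg^{F'}$ to $S$, giving an $\Out(S)$-equivariant bijection between the $p'$-characters of $S$ and those of the normalizer of a Sylow $p$-subgroup of $S$.

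The main obstacle is the cohomological condition: for each pair $(\chi,\Psi(\chi))$ one must show $\chi$ extends to its inertia subgroup $(\Galg^{F'}\rtimes\langle\sigma,F\rangle)_\chi$, that $\Psi(\chi)$ extends to the corresponding inertia subgroup in $\Balg^{F'}\rtimes\langle\sigma,F\rangle$, and that the associated classes in $H^2$ agree. On the $\Galg^{F'}$-side I would invoke Proposition~\ref{extension}: its hypothesis is fulfilled because $\Galg^\sigma$ is simple of type $F_4$, so $\Zz(\Galg^\sigma)$ is trivial and, \emph{a fortiori}, so is $H^1(F',\Zz(\Galg^\sigma))$. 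Combined with Lemma~\ref{regsigmastable} (applicable since $|H^1(F',\Zz(\Galg))|\in\{1,3\}$ has prime order) and the Alvis--Curtis duality of Equation~(\ref{eq:charsemi}), this reduces every $\sigma$-stable semisimple character, up to a $D$-twist, to a constituent of a $\sigma$-stable dual Gelfand--Graev character whose extension to $\Halg^{F'}$ is supplied by Equation~(\ref{eq:defdualgelfand}) and whose $F$-stability is exactly the content of Proposition~\ref{extension}. On the $\Balg^{F'}$-side the extensions are built directly by Clifford theory from the $A_J$-stable choices fixed in Convention~\ref{conv3}.

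In both situations the extensions descend from canonical extensions of linear characters of abelianisations and are then induced up, producing split extensions of the character stabilisers by the relevant subgroups of $\langle\sigma,F\rangle$; the two cohomology classes therefore both vanish and coincide automatically. The numerical equivariance needed to compare the $H$-invariant subsets on both sides has already been tabulated during the proof of Theorem~\ref{equive6} (notably Table~\ref{tab:preuve}), so no additional fixed-point counting is required, and the inductive McKay condition for $E_6(q)$ follows.
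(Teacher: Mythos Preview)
Your overall strategy matches the paper's, and the pieces you cite (Theorem~\ref{equive6}, Proposition~\ref{extension}, Lemma~\ref{regsigmastable}) are indeed the right ingredients for part of the argument. However, there is a genuine gap in your treatment of the extendibility step.

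The inductive McKay condition requires, for each $\chi\in\Irr_{p'}(\Galg^{F'})$, an extension of $\chi$ to its full inertia group $G_\chi$ inside $\widetilde{\Galg}^{F'}\rtimes A$. When $\chi$ is \emph{not} $D$-stable, $G_\chi$ lives inside $\Galg^{F'}\rtimes A_\chi$, and your use of Proposition~\ref{extension} (together with the cyclic case via~\cite[11.22]{Isaacs}) is exactly right. But when $D$ is nontrivial and $\chi$ \emph{is} $D$-stable, one has $G_\chi=\widetilde{\Galg}^{F'}\rtimes A_\chi$, and Proposition~\ref{extension} no longer applies directly: it produces $F$-stable extensions of $\chi$ to $\Galg^{F'}\rtimes\langle\sigma\rangle$, not to $\widetilde{\Galg}^{F'}\rtimes\langle\sigma\rangle$. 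The paper handles this case by a separate and substantial argument: one first considers the three extensions of $\chi$ to $\widetilde{\Galg}^{F'}$ forming a set $E$, observes that $\sigma$ (having order~$2$) must fix some $\widetilde{\chi}\in E$, argues via commutativity that any $F^i\in A_\chi$ fixes the same $\widetilde{\chi}$, and then---crucially---reproves an analogue of Proposition~\ref{extension} for $\widetilde{\Galg}$ in place of $\Galg$. This last step requires knowing the structure of $\widetilde{\Galg}^{\sigma}$, which is disconnected with connected component of type $F_4$ and $|\Zz(\widetilde{\Galg}^{\sigma})|=2$; this is the content of Lemma~\ref{GtildeE6}, which your plan never invokes. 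The case $\sigma\notin A_\chi$ with $\chi$ $D$-stable is handled by yet another argument, lifting the semisimple label $s$ to an $A_\chi$-stable $\widetilde{s}\in\widetilde{\Galg}^*$.

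A second, smaller gap is your claim that the cohomology classes ``both vanish and coincide automatically''. In the $D$-stable case with $\nu\neq 1_Z$, the paper does not get this for free: it runs a counting argument (Equation~(\ref{eq:inn}) and the lines following it) comparing $|\Irr(\widetilde{\Galg}^{F'}|\chi)^{F''}|$ with $|C^{F''}|$ via Lemma~\ref{lemme:cp} to force the relevant extension to be $F''$-stable. Your sketch does not account for this step.
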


\begin{proof}
Let $X$ be a simple group of type $E_6$ with parameter $q=p^n$. In order
to prove that $X$ is ``good'' for $p$, we will show that $X$ satisfies
the properties (1)--(8) of~\cite[\S10]{IMN}. Let $\Galg$ be a simple
simply-connected group of type $E_6$ defined over $\F_q$ and with split
Frobenius map $F':\Galg\rightarrow\Galg$. Recall that
$X=\Galg^{F'}/\Zz(\Galg)^{F'}$ and $\Galg^{F'}$ is the universal cover
of $X$. Moreover, the subgroup $\Ualg^F$ is a $p$-Sylow subgroup of
$\Galg^{F'}$ with normalizer $\Balg^{F'}$. We set $M=\Balg^{F'}$ and we
will show that $M$ has the required properties. By~\cite[Lemma 5]{Br6}, we
have $\Irr_s(\Galg^{F'})=\Irr_{p'}(\Galg^{F'})$ and
$\Irr_s(\Balg^{F'})=\Irr_{p'}(\Balg^{F'})$.
Let $\nu\in\Irr(\Zz(\Galg)^{F'})$ and $\cal A_{\nu}=D\rtimes A_{\nu}$ as
above. By Theorem~\ref{equiv1}, there is an
$\cal A_{\nu}$-equivariant bijection
$\Phi_{\nu}:\Irr_{p'}(\Galg^{F'}|\nu)\rightarrow\Irr_{p'}(\Balg^{F'}|\nu)$.
Thus, Properties (1)--(4) of~\cite[\S10]{IMN} are satisfied.
For $\chi\in\Irr_{p'}(\widetilde{\Galg}^{F'}|\nu)$, we define
$$G_{\chi}=G\rtimes\Stab_{A_{\nu}}(\chi)\quad\textrm{and}\quad
G'_{\chi}=G'\rtimes\Stab_{A_{\nu}}(\Phi_{\nu}(\chi)),$$
where $G=\Galg^{F'}$ (resp. $G'=\Balg^{F'}$) if $\chi$ is not
$D$-stable, and $G=\widetilde{\Galg}^{F'}$ (resp.,
$G'=\widetilde{\Balg}^{F'}$) otherwise, where $\widetilde{\Galg}$ is the
connected group defined in Equation~(\ref{eq:Gtilde}) with the
convention of~\S\ref{sec:preliminaire}. We have $\Zz(\Galg^{F'})\leq
\Zz(G_{\chi})$ and $\Stab_{D\rtimes A}(\chi)$ is induced by the
conjugation action of $\operatorname{N}_{G_{\chi}}(\Ualg^{F'})$.
So, the property~(5) of~\cite[\S10]{IMN} holds.
We set $C=\Zz(G)$. Then $C=\Cen_{G_{\chi}}(\Galg^{F'})$ and the
property~(6) of~\cite[\S10]{IMN} is true. 

In order to prove the properties~(7) and~(8) of~\cite[\S10]{IMN}, we will
first show that $\chi$ and $\Phi_{\nu}(\chi)$ extend to $G_{\chi}$ and
$G_{\chi}'$, respectively. 
If $\chi$ is not $D$-stable, then $\Stab_A(\chi)\leq \cyc{F,\sigma}$. If
$\Stab_{A}(\chi)$ is cyclic, then $\chi$ extends to $G_{\chi}$
by~\cite[11.22]{Isaacs}. Otherwise, $\Stab_A(\chi)=\cyc{\sigma,
F^{n/d}}$ for some divisor $d$ of $n$, and by
Proposition~\ref{extension}, the extensions of $\chi$ to
$\Galg^{F'}\rtimes{\sigma}$ are $F^{n/d}$-stable, and thus extend to
$G_{\chi}$ by~\cite[11.22]{Isaacs}. So, $\chi$ is extendible to
$G_{\chi}$.

Suppose now that $D$ is not trivial and
 $\chi$ is $D$-stable. In particular, $\chi$ is
extendible to $\widetilde{\Galg}^{F'}$.  Write $A_{\chi}=\Stab_A(\chi)$.
We will prove that $\chi$ extends to an $A_{\chi}$-stable character of
$\widetilde{\Galg}^{F'}$. First, we suppose that $\sigma\in A_{\chi}$
and we write $H=GC$. Recall that $H$ is a central product and that
$C=\Zz(\widetilde{\Galg}^{F'})$. By~\cite[\S6.B]{BonnafeAn}, $H$ has
index $|D|$ in $\widetilde{\Galg}^{F'}$. 
Note that $\chi$ is over $1_{Z}$.
Since $1_C$ lies in
$\Irr(C|1_Z)$, we deduce from Lemma~\ref{lemme:cp} that the
character $\chi\cprod 1_{C}$ is $A_{\chi}$-stable. Moreover,
Gallagher's theorem implies that the elements of
$E=\Irr(\widetilde{\Galg}^{F'}|\chi\cprod 1_{C})$ extend $\chi$ and
$|E|=|D|=3$. Moreover, $E$ is $A_{\chi}$-stable. Denote by
$\rho:A_{\chi}\rightarrow \mathfrak{S}_E$ the homomorphism of groups induced
by this operation. Suppose that $\sigma\in A_{\chi}$.
Note that $\sigma$ does not act trivially on $E$
(because if $\sigma$ fixes a character of $E$, then the action of
$\sigma$ on $E$ is equivalent to the action of $\sigma$ on
$\widetilde{\Galg}^{F'}/H$, which is not trivial). So, $\rho(\sigma)\neq
1$ and $\rho(\sigma)$ has order $2$. Thus, $\sigma$ has to fix a
character of $E$. Suppose now that there is $F^i\in A_{\chi}$. Since
$\sigma$ and $F^{i}$ commute, we deduce that $\rho(F^i)$ centralizes
$\rho(\sigma)$. But $\mathfrak{S}_E\simeq\mathfrak{S}_3$ and the
centralizer of $\rho(\sigma)$ in $\mathfrak{S}_E$ has order $2$. Thus,
there is an $A_{\chi}$-stable character $\widetilde{\chi}$ in $E$. 
By~\cite[1.31]{DMnonconnexe}, we have
$\widetilde{\Galg}^{\sigma}=\rad(\widetilde{\Galg})^{\sigma}D(\Galg)^{\sigma}$,
because $\sigma$ has order $2$ and $|\Zz(D(\widetilde{\Galg}))|=3$.
Since $\rad(\widetilde{\Galg})=\Talg_0$ (cf. Equation~(\ref{eq:radE6}))
has dimension $1$, we deduce that $\sigma$ acts by inversion on
$\Talg_0$ and $|\Talg_0^{\sigma}|=2$. Hence,
$\widetilde{\Galg}^{\sigma}$ is disconnected with connected component a
simple group of type $F_4$ (by Lemma~\ref{GtildeE6}) of index $2$. We
deduce that $|\Zz(\widetilde{\Galg}^{\sigma})|=2$.
Denote by $\widetilde{\Psi}$ the character of $\widetilde{\Galg}$
constructed in Equation~(\ref{eq:defdualgelfand}). The character
$\widetilde{\chi}$ extends to a character $E(\widetilde{\chi})$ of
$\widetilde{\Galg}^{F'}\rtimes\cyc{\sigma}$, and
$\cyc{\widetilde{\Psi},E(\widetilde{\chi})}_{\widetilde{\Galg}^{F'}
\cdot\sigma}=\pm 1$. Moreover, by~\cite[Proposition 7.2]{Sorlin}
$\widetilde{\Psi}$ has non-zero values only
on $\widetilde{\cal U}_{\sigma}^{F'}$ (see the proof of
Proposition~\ref{extension} for the definition), which is the union of
the two classes $\widetilde{\cal U}_1$ and  $\widetilde{\cal U}_{-1}$ of
$\widetilde{\Galg}^{F'}\rtimes\cyc{\sigma}$ (by \cite[Proposition
8.1]{Sorlin}, because $\sigma$ is semisimple). Then
$E(\widetilde{\chi})$ has a non-zero value on at least 
$\widetilde{\cal U}_1$ or $\widetilde{\cal U}_{-1}$. But $F^i$ fixes
these two classes (because there is an $F^i$-stable element in
$\widetilde{\cal U}_{\sigma}^{F'}$. So, its
$\widetilde{\Galg}^{F'}\rtimes\cyc{\sigma}$-class is $F^i$-stable
and the other class has to be also $F^i$-stable). Then the argument of
Proposition~\ref{extension} shows that $E(\widetilde{\chi})$ is
$F^i$-stable. It follows that $E(\widetilde{\chi})$ extends to
$(\widetilde{\Galg}^{F'}\rtimes\cyc{\sigma})\rtimes\cyc{F^i}$
by~\cite[11.22]{Isaacs}. This proves that $\chi$ extends to $G_{\chi}$.

Now, we suppose that $\sigma\notin A_{\chi}$. Then
$A_{\chi}=\cyc{\sigma^i F^{n/d}}$ for some $d\neq n$. 
Moreover, as we have seen in the proof of Theorem~\ref{equiv1}, we have
$F'=(\sigma^i F^{n/d})^n$ (if not, $\sigma$ has to fix $\chi$).
Then, there is a semisimple element $s$ of $\Galg^{*F'^*}$ such that
$\chi=\rho_{s,1}$. Since $A_{\Galg^*}(s)^{F'^*}$ is trivial, by
the Lang-Steinberg theorem, we can suppose that $s$ is chosen to be 
$\sigma^{*i}F^{*n/d}$-stable. Let $\widetilde{s}$ be an $\sigma^{*i}
F^{*n/d}$-stable semisimple element of $\widetilde{\Galg}^*$ such that
$i^*(\widetilde{s})=s$ (such elements exist by the Lang-Steinberg theorem,
because $\ker(i^*)$ is connected and $\sigma^{*i}F^{*n/d}$-stable). 
Note that $\widetilde{s}\in\widetilde{\Galg}^{*F'^*}$, and since
$\widetilde{s}$ is $\sigma^{*i}F^{*n/d}$-stable, the character 
$\rho_{\widetilde{s}}$ is $\sigma^{i}F^{n/d}$-stable. Moreover,
$\rho_{\widetilde{s}}$ extends $\chi$.
Since $G_{\chi}$ is a cyclic extension of $\widetilde{\Galg}^{F'}$, we
deduce from~\cite[11.22]{Isaacs} that $\chi$ extends to $G_{\chi}$, as
required.

Write $\chi'=\Phi_{\nu}(\chi)$. Then there are $J\subseteq \Delta$,
$z\in H^1(F',\Zz_J)$ and $\psi\in\Irr(\Talg_J)^{F'}$ with
$\Res_{\Zz(\Galg)^{F'}}^{\Talg_J^{F'}}(\psi)=\nu$, such that
$\chi'=\chi_{J,z,\psi}$.
Suppose that $\chi'$ is $D$-stable. Then $z=1$ and $\Talg_J$ is connected.
Write
$\widetilde{\Talg}_J=\Cen_{\widetilde{\Talg}^{F'}}(\phi_J)$, where
$\phi_J$ is chosen as in Convention~\ref{conv3}. Note that
$\widetilde{\Talg}_J$ is a torus because the center of
$\widetilde{\Galg}$ is connected.  
Then $\Talg_J$ is a subtorus of $\widetilde{\Talg}_J$
and by~\cite[0.5]{DM}, there is an
$F'$-stable subtorus $\Talg'$ of $\widetilde{\Talg}_J$ such that
$\widetilde{\Talg}_J=\Talg_J\cdot\Talg'$ (as direct product). 
By~\cite[6.17]{Isaacs} and
the construction of Equation~(\ref{eq:defcharB}) applied to $\Balg$ and
$\widetilde{\Balg}$, we deduce that
$$\Irr(\Balg^{F'}|\chi_{J,1,\psi})=\{\widetilde{\chi}_{J,1,\psi\otimes\mu}
\,|\,\mu\in\Irr(\Talg'^{F'})\},$$
where $\widetilde{\chi}_{J,1,\psi\otimes\mu}$ denotes the character of
$\widetilde{\Balg}^{F'}$ defined in Equation~(\ref{eq:defcharB}).
Note that if $\Talg_J$ is $\sigma^iF^j$-stable (for any $i$, $j$), then
$\Talg'$ is $\sigma^iF^j$-stable. Write $A_{\chi'}=\Stab_A(\chi')$. It
follows that the character $\widetilde{\chi}_{J,1,\psi\otimes
1_{\Talg'^{F'}}}\in\Irr(\widetilde{\Balg}^{F'})$ is an
$A_{\chi'}$-stable extension of $\chi'$.
For $\tau\in A_{\chi'}$, $\phi_J$ and $\psi$ are
$\tau$-stable. Write $\widetilde{\psi}=\psi\otimes 1_{\Talg'^{F'}}$.
Then $\widetilde{\Talg}_J^{F'}$ and 
$\widehat{\phi}_{J}\otimes\widetilde{\psi}$ 
are $A_{\chi'}$-stable. Hence, 
as linear character, $\widehat{\phi}_{J}\otimes\widetilde{\psi}$ extends to a
linear character $E(\widehat{\phi}_{J}\otimes\widetilde{\psi})$ of
$(\Ualg^{F'}\rtimes\widetilde{\Talg}_J^{F'})\rtimes
A_{\chi'}$, and by~\cite[(5.6) p.74]{Isaacs}, we have
\begin{equation}
\label{eq:extB}
\widetilde{\chi}_{J,1,\widetilde{\psi}}=
\Res_{\widetilde{\Balg}^{F'}}^{\widetilde{\Balg}^{F'}\rtimes
A_{\chi'}}
\left(\Ind_{(\Ualg^{F'}\rtimes\widetilde{\Talg}^{F'}_J)\rtimes A_{\chi'}}^{
\widetilde{\Balg}^{F'}\rtimes
A_{\chi'}}E(\widehat{\phi}_{J}\otimes\widetilde{\psi})\right).
\end{equation}
Hence, $\chi'$ extends to $G'_{\chi}$.
Suppose that $\chi'$ is not $D$-stable. If $A_{\chi'}$ is cyclic, then
$\chi'$ extends to $G_{\chi}'$ by~\cite[11.22]{Isaacs}. Otherwise,
$z=1$ and we can show that $\chi'$ extends to $A_{\chi'}$, because 
we can construct an extension of $\chi_{J,1,\psi}$ to
$\Balg^{F'}\rtimes A_{\chi'}$ as in Equation~(\ref{eq:extB}).

We now will prove the properties~(7) and~(8) of~\cite[\S10]{IMN}.  If $\chi$
is not $D$-stable, then $C=\Zz(\Galg)^{F'}$ and we choose $\gamma=\nu$.
Then $\chi\cprod\gamma=\chi$ and
$\Phi_{\nu}(\chi)\cprod\gamma=\Phi_{\nu}(\chi)$ and by the preceding
discussion, these characters extend to $G_{\chi}$ and $G_{\chi}'$.  
Then by~\cite[11.7]{Isaacs}, we have
$$[\chi]_{G_{\chi}/\Galg^{F'}}=[\Phi_{\nu}(\chi)]_{G_{\chi}'/\Balg^{F'}},$$
and Properties~(7) and~(8) of~\cite[\S10]{IMN} are proven in this case.

Suppose now that $D$ is not trivial and that $\chi$ is $D$-stable. If
$\nu=1_Z$ then we set $\gamma=1_C$, which is $G_{\chi}$ and
$G_{\chi}'$-stable. Moreover, as we have seen, the characters
$\chi\cprod 1_C$ and $\Phi_{\nu}(\chi)\cprod 1_C$ extend to $G_{\chi}$
and $G_{\chi}'$. If $\nu\neq 1_Z$, then $G_{\chi}=\cyc{F''}$ with
$F''=\sigma^i F^{n/d}$ for some $d\neq 1$ and $F''^n=F'$. Moreover,
since $\nu\neq 1$ is $F''$-stable, it follows that $F''$ acts trivially
on $\Zz(\Galg)^{F'}$. Note that
$\Phi_{\nu}(\chi)$ is $D$-stable and extends to $G_{\chi}'$. Denote by
$\widetilde{\chi}'$ such an extension and write
$$\gamma=\Res_{\Zz(\widetilde{\Galg})^{F'}}^{G_{\chi}'}(\widetilde{\chi}').$$
Then $\gamma$ is $G_{\chi}'$-stable and $G_{\chi}$-stable and lies over
$\nu$.
Furthermore, by~\cite[6.17]{Isaacs}, we have
\begin{equation}
\left |\Irr(\widetilde{\Galg}^{F'}|\chi)^{F''}\right|
=\sum_{\theta\in\Irr(\Galg^F\,
C|\chi)^{F''}}\left|\Irr(\widetilde{\Galg}^{F'}|\theta)^{F''}\right|.
\label{eq:inn}
\end{equation}
As we have seen previously, $\chi=\rho_{s,1}$ for some $F''$-stable
semisimple element $s\in\Galg^{*F'^*}$ and there is an $F''$-stable
semisimple element $\widetilde{s}\in\widetilde{\Galg}^{*F'^*}$ such that
$i^*(\widetilde{s})=s$. Then by~\cite[(15.9)]{BonnafeAn}, we have
$\Irr(\widetilde{\Galg}^{F'}|\chi)=\{\rho_{\widetilde{s}z,1}\,|\,z\in
C\}$ and~\cite[Lemma 8.3]{BonnafeAn} implies that
this set is in bijection with $C$ because
$A_{\Galg^*}(s)^{F'}$ is trivial. Moreover, since $\widetilde{s}$ is
$F''$-stable, these operations are $F''$-equivalent and
$|\Irr(\widetilde{\Galg}^{F'},|\chi)^{F''}|=|C^{F''}|$.
Now, Lemma~\ref{lemme:cp} implies that $|\Irr(\Galg^F\,
|\chi)^{F''}|=|C^{F''}|/3$, because $F''$ acts trivially on
$\Zz(\Galg^F)=\Galg^F\cap C$. Since
$|\Irr(\widetilde{\Galg}^{F'}|\theta)^{F''}|\leq 3$ (because $\Galg^F C$
has index $3$ in $\widetilde{\Galg}^{F'}$ and by~\cite[6.17]{Isaacs}),
we deduce from Equation~(\ref{eq:inn}) that
$|\Irr(\widetilde{\Galg}^{F'}|\theta)^{F''}|$ has to be equal to $3$. 
But $\chi\cdot\gamma\in\Irr(\Galg^F\,C|\chi)^{F''}$. Thus,
$\chi\cdot\gamma$ extends to an $F''$-stable character of
$\widetilde{\Galg}^{F'}$ and~\cite[11.22]{Isaacs} implies that
$\chi\cdot\gamma$ extends to $G_{\chi}$.
We conclude using~\cite[11.7]{Isaacs} that 
$$[\chi\cprod\gamma]_{G_{\chi}/\Galg^{F'}
C}=[\Phi_{\nu}(\chi)\cprod\gamma]_{\Galg_{\chi}'/\Galg^{F'}\,C}.$$
Hence, the properties~(7) and~(8) of~\cite[\S10]{IMN} hold, as required.
\end{proof}

\begin{theorem}
Let $p>3$ be a prime number and $q$ a $p$-power.
Then the finite simple group $^2E_6(q)$ is ``good'' for the prime $p$.
\label{2E6bon}
\end{theorem}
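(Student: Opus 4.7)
The plan is to follow the strategy of Theorem~\ref{E6bon}, with the adaptations appropriate to the twisted case. Let $\Galg$ be a simple simply-connected group of type $E_6$ over $\overline{\F}_p$, let $F$ be the split Frobenius giving the $\F_p$-structure, and let $\sigma$ be the graph automorphism of order $2$. Take $F' = \sigma F^n$ (with $q = p^n$) so that $\Galg^{F'}$ is the universal cover of ${}^2E_6(q)$. The outer automorphism group of $\Galg^{F'}/\Zz(\Galg^{F'})$ is now generated only by the diagonal automorphisms $D$ (of order $\gcd(3, q+1)$) and the field automorphisms $A = \cyc{F}$; the graph automorphism has been absorbed into $F'$ and no longer appears as an independent outer automorphism. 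Set $M = \Balg^{F'}$, the normalizer of the Sylow $p$-subgroup $\Ualg^{F'}$ of $\Galg^{F'}$.

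First I would establish the analog of Theorem~\ref{equive6}: for each $\nu \in \Irr(\Zz(\Galg)^{F'})$ there is a $(D \rtimes A_\nu)$-equivariant bijection $\Phi_\nu \colon \Irr_s(\Galg^{F'}|\nu) \to \Irr_s(\Balg^{F'}|\nu)$. By Lemma~\ref{equiv1} it suffices to match the $H$-fixed points of $\cal O_{D,\nu,k}$ and $\cal O'_{D,\nu,k}$ for every $H \leq A_\nu$ and $k \in \{1, 3\}$. The count on the group side comes from Proposition~\ref{prop:qplusun} (which is now applicable since the linear part $F_0$ of $F'$ is non-trivial and the relevant congruence $q \equiv -1 \bmod 3$ occurs precisely when $D$ is non-trivial) combined with Table~\ref{tab:discocent}, giving $q^2$ $F'^*$-stable semisimple classes with disconnected centralizer. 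The count on the Borel side uses \cite[Lemme~2.16, Table~2.17]{BonLevi} applied to the $F'$-stable Levi subgroups, which correspond to the $\rho$-stable subsets $J \subseteq \Delta$. The subcase $\Zz(\Galg)^{F'} = 1$ is immediate since then $D$ is trivial; the subcase $\Zz(\Galg)^{F'} = \Zz(\Galg)$ is handled by restricting the computations of Table~\ref{tab:preuve} to subgroups of $A$.

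Next I would verify conditions (1)--(8) of~\cite[\S10]{IMN} along the template of Theorem~\ref{E6bon}. Conditions (1)--(6) follow from the existence of $\Phi_\nu$ and the identification $C = \Zz(\widetilde{\Galg}^{F'}) = \Cen_{G_\chi}(\Galg^{F'})$. For (7) and (8) one needs compatible extensions of $\chi$ and $\Phi_\nu(\chi)$ to $G_\chi$ and $G'_\chi$ realising the same class in $H^2(G_\chi/\Galg^{F'}, \C^\times)$. The extendibility step is significantly simpler than in the untwisted case: since $A$ is cyclic, every stabiliser $A_\chi$ is cyclic, so \cite[11.22]{Isaacs} provides an extension of any $A_\chi$-stable character of $\widetilde{\Galg}^{F'}$ restricting to $\chi$. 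In the $D$-stable case, an $A_\chi$-stable extension $\widetilde{\chi} = \rho_{\widetilde{s},1}$ is produced by lifting the label $s \in \Galg^{*F'^*}$ to an appropriate $F^j$-stable $\widetilde{s} \in \widetilde{\Galg}^{*F'^*}$ via the Lang--Steinberg theorem (using that $\ker(i^*)$ is connected). On the Borel side the analogous extension is constructed as in Equation~(\ref{eq:extB}).

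The main obstacle is the comparison of cohomology classes when $\chi$ is $D$-stable and $\nu \neq 1_Z$, which forces $3 \mid q+1$. Following the final paragraph of the proof of Theorem~\ref{E6bon}, one must select $\gamma \in \Irr(C)$ over $\nu$ such that $\chi \cprod \gamma$ extends to $G_\chi$, and then apply \cite[11.7]{Isaacs} on both sides. By \cite[(15.9),\,Lemma~8.3]{BonnafeAn} combined with Lemma~\ref{lemme:cp}, this reduces to showing $|\Irr(\widetilde{\Galg}^{F'}|\chi)^{F^j}| = 3$ whenever $F^j$ fixes both $\chi$ and $\nu$, which in turn rests on the fact that such an $F^j$ must act trivially on $\Zz(\Galg)^{F'}$. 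The delicate point is that Proposition~\ref{extension} was formulated for a split Frobenius with $\sigma$ playing the role of a separate outer automorphism, whereas here $\sigma$ is absorbed into $F'$; one must therefore re-derive the required $F^j$-invariance of the semisimple constituents of $\Psi_1$ in the twisted setting, exploiting that $F^j$ still preserves the unique $\Halg^{F'}$-class $\cal U_\sigma^{F'}$ appearing in Sorlin's construction and acts trivially on the corresponding character values. Once this is verified, the remainder of the argument proceeds exactly as for $E_6(q)$.
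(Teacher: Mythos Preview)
Your approach is essentially the paper's, and it is correct. Two remarks.

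First, the paper streamlines the fixed-point counts by a reduction lemma: since $F'=\sigma F^n$, for any set on which both $F$ and $F'$ act, an element fixed by $F^k$ and by $F'$ is, writing $d$ for the order of $F^k$ on $\Galg^{F'}$, fixed by $\sigma$ and $F^k$ when $d$ is even, and by $\sigma F^{n/d}$ when $d$ is odd. This lets all twisted fixed-point numbers be read off directly from the untwisted computations already carried out in the proof of Theorem~\ref{equive6}. Your route via Proposition~\ref{prop:qplusun} and restriction of Table~\ref{tab:preuve} works too, just less economically.

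Second, your final paragraph introduces an unnecessary worry. Because $A=\cyc{F}$ is cyclic (of order $2n$, since $F^n$ acts on $\Galg^{F'}$ as $\sigma$), every stabiliser $A_\chi$ is cyclic. The only place in the proof of Theorem~\ref{E6bon} where Proposition~\ref{extension} or the Sorlin argument on $\cal U_\sigma^{F'}$ was invoked was precisely the non-cyclic case $A_\chi=\cyc{\sigma}\times\cyc{F^{n/d}}$, which does not occur here. Once you have an $A_\chi$-stable extension $\rho_{\widetilde s}$ of $\chi$ to $\widetilde{\Galg}^{F'}$ via Lang--Steinberg, a single application of \cite[11.22]{Isaacs} extends it to $G_\chi$. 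The cohomology matching for $\nu\neq 1_Z$ then goes through the counting of Equation~(\ref{eq:inn}) exactly as written, with no appeal to $\Psi_1$ or Sorlin's classes. The paper accordingly dispatches properties (5)--(8) in one sentence referring back to Theorem~\ref{E6bon}.
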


\begin{proof}
We set $F'=\sigma F^n$. Then $\Galg^{F'}$ is the universal cover of
$^2E_6(q)$, and the outer automorphism group of $\Galg^{F'}$ is
$D\rtimes \cyc{F}$, where $F$ acts on $\Galg^{F'}$ as an automorphism of
order $2n$.
For $\omega\in\Delta/\rho$, we set
$\Xalg_{\omega}=\prod_{\alpha\in\omega}\Xalg_{\alpha}$. Note that
$\Xalg_{\omega}$ is a subgroup of $\Ualg$, because $\Galg$ is of type
$E_6$ and if the roots $\alpha$ and $\rho(\alpha)$ are distinct, then
they are orthogonal. Moreover, by~\cite[\S8.1]{carter2} we have
$\Ualg_1^{F'}\simeq\prod_{\omega\in\Delta/\rho} \Xalg_{\omega}^{F'}$ (as
direct product) and can
construct the Gelfand Graev characters, the regular characters 
and the semisimple characters of $\Galg^{F'}$ as
in~\S\ref{subsectionSemi}. Note that the analogue of Theorem~\ref{decrsemi} is valid
(see~\cite[Proposition 15.3,\,
Corollaire 15.14]{BonnafeAn}) and the regular character $\phi_1$ of
$\Irr_l(\Ualg^{F'})$ can be chosen $F$-stable by~\cite[6.32]{Isaacs}.
In particular, if $s$ is an $F^*$-stable element of $\Galg^{*F'^*}$,
then for every $z\in H^1(F',\operatorname{Z}(\Galg))$, we have
$${}^{F^i}\rho_{s,\,\widehat{\omega}_s^0(z)}=\rho_{s,\,
\widehat{\omega}_s^0(F^i(z))}.$$
For every $J\subseteq \Delta/\rho$, we write $\omega_J$ for the
corresponding $\widetilde{\Talg}^{F'}$-orbit in $\Irr_l(\Ualg^{F'})$ and we
choose a representative $\phi_J\in\omega_J$ as in
Convention~\ref{conv3}. Then we can define
$\chi_{J,z,\psi}\in\Irr_s(\Balg^{F'})$ as in
Equation~(\ref{eq:defcharB}), which satisfies Lemma~\ref{imgautoB}.

Let $E$ be an $F$- and $F'$-set, and let $x\in E$ be such
that $F^k(x)=x$ and $F'(x)=x$. Denote by $d$ the order of $F^k$
(viewed as an automorphism of $\Galg^{F'}$). If $d$
is even (resp. odd), then this is equivalent to $\sigma(x)=x$ and
$F^k(x)=x$ (resp. $\sigma F^{n/d}(x)=x$). 
Moreover, note that if $\Zz(\Galg)^{F'}=\Zz(\Galg)$,
then $\Zz(\Galg)^{F}=\{1\}$ and $n$ is odd.
Using these facts, we can prove in a similar way as in the proof of
Theorem~\ref{equive6}, that 
for every $\nu\in\Irr(\Zz(\Galg)^{F'})$ and
$A_{\nu}=\Stab_{\cyc{F}}(\nu)$,
there is an $D\rtimes
A_{\nu}$-equivariant bijection between $\Irr_s(\Galg^{F'}, \nu)$ and
$\Irr_s(\Balg^{F'})$.
Finally, we prove that the properties~(5)-(8) of~\cite[\S10]{IMN} are
satisfied as in the proof of Theorem~\ref{E6bon}.
\end{proof}

\begin{remark}
This method is not sufficient to show the statement for $p\in\{2,3\}$.
Indeed, we need the assumption that $p$ is a good prime for $\Galg$ to
apply the ``relative'' version of the McKay Conjecture 
proved in~\cite[Theorem 1.1]{Br8}, and~\cite[Proposition 8.1(ii)]{Sorlin}.
\end{remark}

\begin{proposition}
\label{An}
Suppose that $X=\operatorname{PSL}_{\ell}(p^n)$ or
$X=\operatorname{PSU}_{\ell}(p^n)$ such that $p$ is odd and $\ell$ is
an odd prime number that not divides $p$. Then $X$ is ``good'' for $p$. 
\end{proposition}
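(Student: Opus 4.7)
The plan is to follow the strategy of Theorems~\ref{E6bon} and~\ref{2E6bon}, specialized to type $A_{\ell-1}$. Let $\Galg$ be a simple simply-connected group of type $A_{\ell-1}$ over $\overline{\F}_p$; set $F'$ to be the split Frobenius map when $X=\PSL_{\ell}(p^n)$, and $F'=\sigma F^n$ (with $\sigma$ the non-trivial graph automorphism of $\Delta$) when $X=\operatorname{PSU}_{\ell}(p^n)$, so that $X\simeq\Galg^{F'}/\Zz(\Galg)^{F'}$. The Borel subgroup $M=\Balg^{F'}$ normalizes the Sylow $p$-subgroup $\Ualg^{F'}$ of $\Galg^{F'}$ and plays the role prescribed by~\cite[\S10]{IMN}. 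The hypotheses guarantee that $p$ is a good prime for $\Galg$ (automatic in type $A$) and that $\cal A_{\Galg}=\Zz(\Galg_{\Sc})$ is cyclic of prime order $\ell\neq p$, so the full strength of Sections~\ref{section1} and~\ref{section2} is available.

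First I would invoke~\cite[Lemma 5]{Br6} to identify $\Irr_{p'}(\Galg^{F'})$ with $\Irr_{s}(\Galg^{F'})$ and likewise for $\Balg^{F'}$, reducing the task to constructing a $D\rtimes A_{\nu}$-equivariant bijection $\Phi_{\nu}:\Irr_{s}(\Galg^{F'}|\nu)\to\Irr_{s}(\Balg^{F'}|\nu)$ for each $\nu\in\Irr(\Zz(\Galg)^{F'})$, where $D$ is the outer diagonal group (of order $1$ or $\ell$) and $A$ is $\cyc{F,\sigma}$ or $\cyc{F}$ according to the case. Following Lemma~\ref{equiv1}, this amounts to matching $D$-orbits of sizes $1$ and $\ell$ on the two sides in an $A_{\nu}$-equivariant way. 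The size-$\ell$ orbits on the group side are counted via Proposition~\ref{prop:premier} and Table~\ref{tab:discocent}, which give exactly one $F'$-stable semisimple class of $\Galg^*$ with disconnected centralizer and component group of order $\ell$; on the Borel side, they are counted via~\cite[Lemme 2.16]{BonLevi}. The size-$1$ orbits are then obtained by subtraction from the total counts of~\cite[Theorem 1.1]{Br8}, mimicking Equations~(\ref{eq:calinter})--(\ref{eq:int4}) in the proof of Theorem~\ref{equive6}.

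With $\Phi_{\nu}$ in hand, properties (1)--(6) of~\cite[\S10]{IMN} follow formally from the construction. For (7) and (8), one must produce extensions of $\chi$ and $\chi':=\Phi_{\nu}(\chi)$ to their stabilizers $G_{\chi}$ and $G'_{\chi}$ whose associated cohomology classes in $H^2$ coincide. When $\chi$ is not $D$-stable, $\Stab_A(\chi)$ lies in $\cyc{F,\sigma}$ and is either cyclic---so \cite[11.22]{Isaacs} applies directly---or of the form $\cyc{\sigma,F^{n/d}}$, in which case Proposition~\ref{extension} (applicable because $\ell$ is prime) yields $F^{n/d}$-stability of the extensions of $\chi$ to $\Galg^{F'}\rtimes\cyc{\sigma}$. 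The Borel-side extension is provided by the explicit construction in Equation~(\ref{eq:extB}), and~\cite[11.7]{Isaacs} forces the two cohomology classes to agree.

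The main obstacle, exactly as in Theorem~\ref{E6bon}, is the $D$-stable case. Here $\chi$ admits $\ell$ extensions to $\widetilde{\Galg}^{F'}$ on which $A_{\chi}$ acts through a homomorphism $\rho:A_{\chi}\to\Sym_{\ell}$. The key point is that $\ell$ is an odd prime and $\sigma$ inverts $\Zz(\Galg)\simeq\Z/\ell\Z$, so every $2$-element in $\operatorname{Im}(\rho)$ must fix at least one extension and has a small centralizer in $\Sym_{\ell}$. Combined with Lemma~\ref{lemme:cp} and the cardinality argument of Equation~(\ref{eq:inn}), this lets one choose an $A_{\chi}$-stable $\gamma\in\Irr(\Zz(\widetilde{\Galg})^{F'})$---obtained by restricting an $A_{\chi}$-stable extension of $\chi'$ to $G'_{\chi}$---such that $\chi\cprod\gamma$ extends to $G_{\chi}$, whereupon the cohomological equality follows from~\cite[11.7]{Isaacs}. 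The delicate point is maintaining this simultaneous control on both sides and invoking Proposition~\ref{extension} to ensure $F$-stability; it is precisely here that the hypothesis that $\ell$ is prime is essential.
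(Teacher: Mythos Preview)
Your overall strategy matches the paper's: the same reduction via Lemma~\ref{equiv1}, the same counting via Proposition~\ref{prop:premier} and~\cite{BonLevi,Br8}, and the same extension arguments for properties (7)--(8). Two points in your $D$-stable analysis need correction, however.

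First, the ``small centralizer in $\Sym_{\ell}$'' argument does not transplant from $E_6$. In Theorem~\ref{E6bon} the set $E$ has three elements and $\mathfrak S_E\simeq\mathfrak S_3$, so the centralizer of any involution has order $2$, forcing $\rho(F^i)\in\langle\rho(\sigma)\rangle$. For general odd prime $\ell$ the centralizer of $\rho(\sigma)$ in $\Sym_\ell$ has order $2^{(\ell-1)/2}\cdot((\ell-1)/2)!$, which is far from $2$. The paper's argument is different and cleaner: since $\sigma$ acts on $E$ through its inversion action on the cyclic group $\widetilde\Galg^{F'}/\Galg^{F'}C$ of odd order $\ell$, it has a \emph{unique} fixed point $\widetilde\chi\in E$. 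Then for any $\tau\in A_\chi$, commutativity of $\tau$ and $\sigma$ gives that $\tau(\widetilde\chi)$ is again $\sigma$-fixed, so $\tau(\widetilde\chi)=\widetilde\chi$ by uniqueness. This is what you should invoke; your sketch states that $\sigma$ inverts $\Zz(\Galg)$ but then draws the wrong consequence from it.

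Second, your justification ``Proposition~\ref{extension} (applicable because $\ell$ is prime)'' is misattributed. The hypothesis of Proposition~\ref{extension} is that $H^1(F',\Zz(\Galg^\sigma))$ is trivial. The paper verifies this by computing $\Galg^\sigma=\operatorname{SO}_\ell(\overline\F_p)$, which for odd $\ell$ has trivial center; primality of $\ell$ plays no role here. (Primality of $|H^1(F',\Zz(\Galg))|$ is needed elsewhere, e.g.\ for Lemma~\ref{regsigmastable}, but that is a separate hypothesis.)
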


\begin{proof}
We set $\widetilde{\Galg}=\operatorname{GL}_{\ell}(\overline{\F}_p)$ and
denote by $F$ the standard Frobenius map of $\widetilde{\Galg}$ which
acts by raising all entries of a matrix to the $p$-power. Write $F'=F^n$
and $\sigma$ for the non-trivial graph automorphism with respect to the
$F$-stable Borel subgroup of lower triangular matrices and the
$F$-stable maximal torus $\widetilde{\Talg}$ of diagonal matrices of
$\widetilde{\Galg}$. We set
$\Galg=\operatorname{SL}_{\ell}(\overline{\F}_p)$,
$\Talg=\widetilde{\Talg}\cap\Galg$ and
$\Balg=\widetilde{\Balg}\cap\Galg$.
Note that
$\widetilde{\Galg}^{\sigma}=\operatorname{GO}_{\ell}(\overline{\F}_p)$
and $\Galg^{\sigma}=\operatorname{SO}_{\ell}(\overline{\F}_p)$, which
implies that $\Zz(\Galg^{\sigma})=\{1\}$ and
$\Zz(\widetilde{\Galg}^{\sigma})$ has order $2$.
Denote by $\Delta=\{\alpha_1,\ldots,\alpha_{\ell-1}\}$ the set of simple
roots of $\Galg$ with respect to $\Talg$ and $\Balg$.
We set $\widetilde{\alpha}_{i}=\frac{1}{2}(\alpha_i+\alpha_{\ell -i})$
and $\Delta_{\sigma}=\{\widetilde{\alpha}_i\,|\, 1\leq 1\leq (\ell
-1)/2\}$. Then it is proven in~\cite[Lemma 4.4.7]{sol} that
$\Delta_{\sigma}$ is the set of simple roots of $\Galg^{\sigma}$ with
respect to $\Talg^{\sigma}$ and $\Balg^{\sigma}$. Moreover, if we set
$\widetilde{\Xalg}_{\widetilde{\alpha}_i}=\Xalg_{\alpha_i}.\Xalg_{\alpha_{\ell
-i}}$ for $1\leq i\leq (\ell -3)/2$ and $\widetilde{\Xalg}_{
\widetilde{\alpha}_{(\ell-1)/2}}=\cyc{\Xalg_{
\alpha_{(\ell-1)/2}},\Xalg_{\alpha_{(\ell+1)/2}}}$, then
the group $\widetilde{\Xalg}_{\widetilde{\alpha}_i}^{\sigma}$ is the
root subgroup corresponding to $\widetilde{\alpha}_i$; see the proof 
of~\cite[Lemma 4.4.7]{sol}.
Write $\widetilde{J}$ for the subset of $\Delta_{\sigma}$ associated
to a $\sigma$-stable subset $J$ of $\Delta$. Since
$\Zz(\Galg)^{\tau}=\{1\}$, it follows from the argument of the proof of
Theorem~\ref{equive6} that $\Talg_J^{\sigma}$ (here, $\Talg_J$ denotes
the radical of $\Lalg_J$ as before) is the radical of the
Levi subgroup $\Lalg_{\widetilde{J}}$ of $\Galg^{\sigma}$.

Therefore, by a similar argument to Lemma~\ref{equiv1} and
Theorem~\ref{equive6}, we show that for
$\nu\in\Irr(\Zz(\Galg)^{F'})$, there is an $A_{\nu}$-equivariant
bijection between $\Irr_s(\Galg^{F'}|\nu)$ and
$\Irr_s(\Balg^{F'}|\nu)$.

Now, we suppose that $\chi\in\Irr(\Galg^{F'})$ is
$\widetilde{\Talg}^{F'}$-stable and we denote by $A_{\chi}$ its
inertia subgroup in $\operatorname{Aut}(\widetilde{\Galg}^{F'})$. As
above, we write $E=\Irr(\widetilde{\Galg}^{F'}|\chi\cprod 1_C)$, where
$C=\Zz(\widetilde{\Galg}^{F'})$. Then $|E|=\ell$ and $\chi\cprod 1_C$ is
$A_{\chi}$-stable. So, $A_{\chi}$ acts on $E$.
Suppose that $\sigma\in A_{\chi}$. Since $\sigma$ has order $2$ and that
$\ell$ is odd, $\sigma$ fixes a character $\widetilde{\chi}$ of $E$.
Thus, by Clifford theory and by~\cite[6.32]{Isaacs}, the actions of
$\sigma$ on $E$ and on $\widetilde{\Galg}^{F'}/\Galg^FC$ are equivalent.
Since $\sigma$ acts by inversion on this group and has no non-trivial
fixed point (because $\ell$ is odd), we deduce that $\widetilde{\chi}$
is the unique $\sigma$-stable character of $E$.
Let $\tau\in A_{\chi}$. Then $\tau$ and $\sigma$ commute and
$\tau(\widetilde{\chi})$ is a $\sigma$-stable character of $E$. By
unicity, $\tau(\widetilde{\chi})=\widetilde{\chi}$, which proves that
$E$ has an $A_{\chi}$-stable element.  Finally, we conclude with a
similar argument to the proof of Theorem~\ref{E6bon}.  The proof for a
twisted Frobenius map is similar and the claim is proven.
\end{proof}

\noindent\textbf{Acknowledgements.}\quad
For valuable and helpful discussions on the paper~\cite{bonnafequasi}, I
wish to thank C\'edric Bonnaf\'e. For many fruitful conversations on the
subject, his precise reading of the manuscript and useful comments on the paper,
I sincerly thank Gunter Malle. I also thank Sebastian Herpel for
interesting discussions on algebraic group theory.  

 \bibliographystyle{abbrv}
\bibliography{references}

\def\cprime{$'$}
\begin{thebibliography}{10}

\bibitem{BonLevi}
C.~Bonnaf{\'e}.
\newblock \'{E}l\'ements unipotents r\'eguliers des sous-groupes de {L}evi.
\newblock {\em Canad. J. Math.}, 56(2):246--276, 2004.

\bibitem{bonnafequasi}
C.~Bonnaf{\'e}.
\newblock Quasi-isolated elements in reductive groups.
\newblock {\em Comm. Algebra}, 33(7):2315--2337, 2005.

\bibitem{BonnafeAn}
C.~Bonnaf{\'e}.
\newblock Sur les caract\`eres des groupes r\'eductifs finis \`a centre non
  connexe: applications aux groupes sp\'eciaux lin\'eaires et unitaires.
\newblock {\em Ast\'erisque}, (306):vi+165, 2006.

\bibitem{Bourbaki456}
N.~Bourbaki.
\newblock {\em \'{E}l\'ements de math\'ematique. {F}asc. {XXXIV}. {G}roupes et
  alg\`ebres de {L}ie. {C}hapitre {IV}: {G}roupes de {C}oxeter et syst\`emes de
  {T}its. {C}hapitre {V}: {G}roupes engendr\'es par des r\'eflexions.
  {C}hapitre {VI}: syst\`emes de racines}.
\newblock Actualit\'es Scientifiques et Industrielles, No. 1337. Hermann,
  Paris, 1968.

\bibitem{Br8}
O.~Brunat.
\newblock Counting $p'$-characters in finite reductive groups.
\newblock {\em To appear in Journal of the London Mathematical Society}.

\bibitem{Br6}
O.~Brunat.
\newblock On the inductive {M}c{K}ay condition in the defining characteristic.
\newblock {\em Math. Z.}, 263(2):411--424, 2009.

\bibitem{BrHi}
O.~Brunat and F.~Himstedt.
\newblock On equivariant bijections relative to the defining characteristic.
\newblock {\em Submitted for publication}.

\bibitem{carter1}
R.~Carter.
\newblock {\em Simple groups of {L}ie type}.
\newblock John Wiley \& Sons, London-New York-Sydney, 1972.
\newblock Pure and Applied Mathematics, Vol. 28.

\bibitem{carter2}
R.~Carter.
\newblock {\em Finite groups of {L}ie type}.
\newblock Pure and Applied Mathematics (New York). John Wiley \& Sons Inc., New
  York, 1985.
\newblock Conjugacy classes and complex characters, A Wiley-Interscience
  Publication.

\bibitem{DeligneLusztig}
P.~Deligne and G.~Lusztig.
\newblock Representations of reductive groups over finite fields.
\newblock {\em Ann. of Math. (2)}, 103(1):103--161, 1976.

\bibitem{DeriBrauer}
D.~I. Deriziotis.
\newblock The {B}rauer complex of a {C}hevalley group.
\newblock {\em J. Algebra}, 70(1):261--269, 1981.

\bibitem{DLM1}
F.~Digne, G.~I. Lehrer, and J.~Michel.
\newblock The characters of the group of rational points of a reductive group
  with non-connected centre.
\newblock {\em J. Reine Angew. Math.}, 425:155--192, 1992.

\bibitem{DLM2}
F.~Digne, G.~I. Lehrer, and J.~Michel.
\newblock On {G}el\cprime fand-{G}raev characters of reductive groups with
  disconnected centre.
\newblock {\em J. Reine Angew. Math.}, 491:131--147, 1997.

\bibitem{DM}
F.~Digne and J.~Michel.
\newblock {\em Representations of finite groups of {L}ie type}, volume~21 of
  {\em London Mathematical Society Student Texts}.
\newblock Cambridge University Press, Cambridge, 1991.

\bibitem{DMnonconnexe}
F.~Digne and J.~Michel.
\newblock Groupes r\'eductifs non connexes.
\newblock {\em Ann. Sci. \'Ecole Norm. Sup. (4)}, 27(3):345--406, 1994.

\bibitem{DMpointfixe}
F.~Digne and J.~Michel.
\newblock Points fixes des automorphismes quasi-semi-simples.
\newblock {\em C. R. Math. Acad. Sci. Paris}, 334(12):1055--1060, 2002.

\bibitem{sol}
D.~Gorenstein, R.~Lyons, and R.~Solomon.
\newblock {\em The classification of the finite simple groups, {N}umber 3},
  volume~40 of {\em Mathematical Surveys and Monographs}.
\newblock American Mathematical Society, Cambridge, 1991.

\bibitem{Hum}
J.~E. Humphreys.
\newblock {\em Ordinary and modular representations of {C}hevalley groups}.
\newblock Lecture Notes in Mathematics, Vol. 528. Springer-Verlag, Berlin,
  1976.

\bibitem{Isaacs}
M.~Isaacs.
\newblock {\em Character theory of finite groups}.
\newblock Academic Press [Harcourt Brace Jovanovich Publishers], New York,
  1976.
\newblock Pure and Applied Mathematics, No. 69.

\bibitem{IMN}
M.~Isaacs, G.~Malle, and G.~Navarro.
\newblock A reduction theorem for the {M}c{K}ay conjecture.
\newblock {\em Invent. Math.}, 170:33--101, 2007.

\bibitem{MalleNonConnexe}
G.~Malle.
\newblock Generalized {D}eligne-{L}usztig characters.
\newblock {\em J. Algebra}, 159(1):64--97, 1993.

\bibitem{Sorlin}
K.~Sorlin.
\newblock \'{E}l\'ements r\'eguliers et repr\'esentations de {G}elfand-{G}raev
  des groupes r\'eductifs non connexes.
\newblock {\em Bull. Soc. Math. France}, 132(2):157--199, 2004.

\bibitem{Springer}
T.~A. Springer.
\newblock {\em Linear algebraic groups}.
\newblock Modern Birkh\"auser Classics. Birkh\"auser Boston Inc., Boston, MA,
  second edition, 2009.

\end{thebibliography}

\end{document}